\newcounter{cnt}
\def\mydggeometry{\makeatletter\dg@YGRID=1\dg@XGRID=20\unitlength=0.003pt\makeatother}
\makeatother \theoremstyle{remark}
\numberwithin{equation}{section}
\theoremstyle{definition} 
\newtheorem{definition}{Definition}\theoremstyle{definition}
\newtheorem{proposition}{Proposition}
\newtheorem{theorem}{Theorem}
\newtheorem{lemma}{Lemma}
\newtheorem{corollary}{Corollary}
\newtheorem{example}{Example}
\newtheorem*{thma}{Theorem A}
\newtheorem*{thmb}{Theorem B}
\newtheorem*{thmc}{Theorem C}
\newcommand{\g}{\mathfrak{g}}
\newcommand{\n}{\mathfrak{n}}
\newcommand{\ra}{\rightarrow}
\newcommand{\ve}{\varepsilon}
\newcommand{\ts}{\otimes}
\newcommand{\s}{\sigma}
\newcommand{\ff}{\mathcal{F}}
\newcommand{\mc}{\mathbb{C}}
\newcommand{\mk}{\mathbb{K}}
\newcommand{\ad}{\operatorname*{ad}}
\newcommand{\ev}{\operatorname*{ev}}
\newcommand{\Hom}{\operatorname*{Hom}}
\newcommand{\Ext}{\operatorname*{Ext}}
\newcommand{\gr}{\operatorname*{gr}}
\newcommand{\GL}{\operatorname*{GL}}
\newcommand{\lie}{\mathfrak}
\newcommand{\rep}{\operatorname*{rep}}
\newcommand{\sspan}{\operatorname*{span}}
\newcommand{\id}{\operatorname*{id}}
\newcommand{\End}{\operatorname*{End}}
\newcommand{\diag}{\operatorname*{diag}}
\newcommand{\Mat}{\operatorname*{Mat}}
\begin{document}

\author{Xin Fang, Ghislain Fourier, Markus Reineke}
\address{Xin Fang: Mathematisches Institut, Universit\"{a}t zu K\"{o}ln, Weyertal 86-90, D-50931, K\"{o}ln, Germany.}
\email{xfang@math.uni-koeln.de}
\address{Ghislain Fourier: Mathematisches Institut, Universit\"at Bonn}
\address{School of Mathematics and Statistics, University of Glasgow}
\email{ghislain.fourier@glasgow.ac.uk}
\address{Markus Reineke: Fachbereich C -- Mathematik, Bergische Universit\"at Wuppertal}
\email{mreineke@uni-wuppertal.de}


\date{\today}

\title[Quantum PBW filtration]{PBW-type filtration on quantum groups of type $A_n$}
\begin{abstract}
We will introduce an $\mathbb{N}$-filtration on the negative part of a quantum group of type $A_n$, such that the associated graded algebra is a $q$-commutative polynomial algebra. This filtration is given in terms of the representation theory of quivers, by realizing the quantum group as the Hall algebra of a quiver. We show that the induced associated graded module of any simple finite-dimensional module (of type 1) is isomorphic to a quotient of this polynomial algebra by a monomial ideal, and we provide a monomial basis for this associated graded module.\\
This construction can be viewed as a quantum analog of the classical PBW framework, and in fact, by considering the classical limit, this basis is the monomial basis provided by Feigin, Littelmann and the second author in the classical setup.
\end{abstract}
\maketitle

\section*{Introduction}
\subsection*{PBW filtration - revisited}
Let $\g$ be a simple Lie algebra with Cartan decomposition $\g=\lie n^+\oplus \lie h\oplus \lie n^-$ and simple roots $\Pi=\{\alpha_1,\alpha_2,\cdots,\alpha_n\}$. 
By setting the degree of any non-zero element of $\lie n^-$ to $1$, and considering the induced filtration of $U(\lie n^-)$, it follows from the PBW theorem that the associated graded algebra is isomorphic to $S(\lie n^-)$, the polynomial algebra of the vector space $\lie n^-$. 
If one fixes for any simple, finite-dimensional module $V$ a highest weight vector, then this PBW filtration induces a filtration on $V$, such that the associated graded module is a  $\mathbb{N}$-graded $S(\lie n^-)$-module. \par
This framework of PBW filtrations has been introduced in \cite{FFoL1}, and various aspects of this construction have been studied in recent times. 
It gained a lot of attention due to its connection to different subjects such as degenerations of flag varieties \cite{F1}, toric degenerations of flag varieties \cite{FFoL3}, Newton-Okounkov bodies \cite{FFoL3}, poset polytopes \cite{ABS1, Fo1}, quiver Grassmannians \cite{CLFR1, CLFR2}, Schubert varieties \cite{BF1, CL1, CLL1, Fo2},  graded characters \cite{BBDF1, CF1}, non-symmetric Macdonald polynomials \cite{CF1, FM1}, to name but a few.
\par
The annihilating ideal of $V^a(\lambda)$ as a quotient of $S(\lie n^-)$ has been provided for $\lie g = \lie{sl}_{n+1}$ \cite{FFoL1}, $\lie g = \lie{sp}_{2n}$ \cite{FFoL2} and cominuscule weights in other types \cite{BD1}. Moreover, a monomial basis was provided, parametrized by lattice points of a normal polytope $P(\lambda)$.\par
We should point out that the annihilating ideal of $V^a(\lambda)$ is not monomial in general, so the basis is obtained by actually choosing a homogeneous total order on monomials in $S(\lie n^-)$. This order provides an $\mathbb{N}^N$-filtration ($N=\dim\lie n^-$) and an induced graded module $V^t(\lambda)$. This is again an $S(\lie n^-)$-module and by construction, its annihilating ideal is monomial.\par
Suppose that $\g=\mathfrak{sl}_{n+1}$. We provide a new grading of the root vectors in $\lie n^-$, hence of the generators of $S(\lie n^-)$, as follows:
\[ 
\operatorname{\deg} (f_{\alpha_{i} + \ldots + \alpha_j})  = (j-i+1)(n-j+1).
\]
This endows $U(\lie n^-)$ with the structure of an $\mathbb{N}$-filtered algebra, whose associated graded algebra is again isomorphic to $S(\lie n^-)$. We denote the induced associated graded module of the simple module $V(\lambda)$ by $V^{\ff}(\lambda)$.  The first main result of this paper is
\begin{thma}\label{main-classical} 
Let $\lie g = \lie{sl}_{n+1}$ and $\lambda$ a dominant integral weight, then:
\begin{enumerate}
\item The lattice points in $P(\lambda)$ parametrize also a monomial basis of $V^{\ff}(\lambda)$.
\item The annihilating ideal of $V^{\ff}(\lambda)$ is monomial.
\end{enumerate}
\end{thma}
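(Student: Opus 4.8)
The plan is to recast Theorem~A as a single inclusion of ideals in $S(\lie n^-)$, after which a dimension count settles both parts at once; essentially all the real content is then concentrated in one combinatorial lemma in which the exponents $(j-i+1)(n-j+1)$ do the work.

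\smallskip

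\emph{Step 1 (the associated graded algebra).} First I would verify that the $\ff$-filtration on $U(\lie n^-)$ is well behaved. Writing $\alpha_{i,j}=\alpha_i+\dots+\alpha_j$, for $\beta=\alpha_{i,j}$ and $\gamma=\alpha_{j+1,k}$ one has $[f_\beta,f_\gamma]=\pm f_{\beta+\gamma}$ in $\lie n^-$, and a one-line computation gives $\deg f_\beta+\deg f_\gamma-\deg f_{\beta+\gamma}=(j-i+1)(k-j)>0$. Hence reordering any product of root vectors into PBW order produces only terms of strictly smaller $\ff$-degree, so $\gr U(\lie n^-)\cong S(\lie n^-)$ as graded algebras. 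In particular $V^{\ff}(\lambda)=\gr V(\lambda)$ is a cyclic $\mathbb N$-graded $S(\lie n^-)$-module generated in degree $0$ by the symbol $\overline{v_\lambda}$ of the highest weight vector, and $\dim V^{\ff}(\lambda)=\dim V(\lambda)=\#\bigl(P(\lambda)\cap\mathbb Z^N\bigr)$.

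\smallskip

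\emph{Step 2 (reduction to one inclusion).} Let $\mathfrak m_\lambda\subseteq S(\lie n^-)$ be the monomial ideal generated by all $f^{\mathbf s}$ with $\mathbf s\in\mathbb Z_{\geq 0}^N$ not lying in $P(\lambda)$. Since every defining inequality of $P(\lambda)$ has non-negative right hand side, $P(\lambda)\cap\mathbb Z^N$ is an order ideal of $\mathbb Z_{\geq 0}^N$, so the standard monomials of $\mathfrak m_\lambda$ are exactly the $f^{\mathbf s}$ with $\mathbf s\in P(\lambda)\cap\mathbb Z^N$, whence $\dim S(\lie n^-)/\mathfrak m_\lambda=\#\bigl(P(\lambda)\cap\mathbb Z^N\bigr)$. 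Comparing with Step~1, Theorem~A follows as soon as one shows $\mathfrak m_\lambda\subseteq\operatorname{Ann}_{S(\lie n^-)}(\overline{v_\lambda})$: then $V^{\ff}(\lambda)$ is a quotient of $S(\lie n^-)/\mathfrak m_\lambda$, equality of dimensions forces the annihilator to equal the monomial ideal $\mathfrak m_\lambda$ (part (2)), and the images of the $f^{\mathbf s}$, $\mathbf s\in P(\lambda)\cap\mathbb Z^N$, form a basis (part (1)). It suffices to prove the inclusion on a generating set, i.e.\ for minimal $\mathbf s\notin P(\lambda)$; by the Dyck path description of $P(\lambda)$ such an $\mathbf s$ is supported on a single Dyck path $p$ from $\alpha_i$ to $\alpha_j$ and satisfies $\sum_{\beta\in p}s_\beta=\lambda(\alpha_{i,j}^\vee)+1$.

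\smallskip

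\emph{Step 3 (producing the monomials; the main obstacle).} For each $i\leq j$ we have $f_{\alpha_{i,j}}^{\,m_{i,j}+1}v_\lambda=0$ with $m_{i,j}=\lambda(\alpha_{i,j}^\vee)$, by $\mathfrak{sl}_2$-theory, and hence $u\circ f_{\alpha_{i,j}}^{\,m_{i,j}+1}\in\operatorname{Ann}_{U(\lie n^-)}(v_\lambda)$ for every $u\in U(\lie n^+)$ acting by the iterated adjoint action (these generate the annihilating ideal of $V^{a}(\lambda)$ described in \cite{FFoL1}). Each such element is PBW-homogeneous, so its $\ff$-leading term is exactly the top $\deg$-degree part of its commutative image in $S(\lie n^-)$. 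Everything therefore reduces to the following lemma, and this is where the exponents $(j-i+1)(n-j+1)$ enter essentially: \emph{for a minimal $\mathbf s\notin P(\lambda)$, supported on the Dyck path $p$ from $\alpha_i$ to $\alpha_j$, there is a choice of $u=u_p\in U(\lie n^+)$, read off from $p$, for which $u_p\circ f_{\alpha_{i,j}}^{\,m_{i,j}+1}$ has a unique monomial of maximal $\deg$-degree, namely $f^{\mathbf s}$.} The relevant computation is short but delicate: the adjoint action moves a root $\alpha_{p,q}$ either to $\alpha_{p+1,q}$ (decreasing $\deg$ by $n-q+1$) or to $\alpha_{p,q-1}$ (changing $\deg$ by $2q-p-n-1$), and one must check that along the fan of monomials obtained from $f_{\alpha_{i,j}}^{\,m_{i,j}+1}$ the function $\deg$ is strictly ``peaked'', its maximum attained precisely at the configuration supported on $p$. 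I expect this bookkeeping --- that the leading terms genuinely are single monomials and that, as $p$ and $(i,j)$ vary, they exhaust a generating set of $\mathfrak m_\lambda$ --- to be the only real difficulty; once it is in place, Step~2 finishes the proof. (Alternatively, the cleanest way to organise the same computation may be in the Hall-algebra model of $U_q(\lie n^-)$, proving the quantum statement first and recovering Theorem~A by the classical limit $q\to1$, as the abstract suggests.)
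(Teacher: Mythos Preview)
Your reduction in Steps~1--2 is clean and correct: once you know $\mathfrak m_\lambda\subseteq I^\ff(\lambda)$, the dimension count finishes everything. But Step~3 is not bookkeeping --- it is the entire theorem, and the lemma you state is neither proved nor obviously true. You assert that for each minimal $\mathbf s\notin P(\lambda)$ there is a single $u_p\in U(\lie n^+)$ for which $\operatorname{ad}(u_p)\bigl(f_{\alpha_{i,j}}^{\,m_{i,j}+1}\bigr)$ has $f^{\mathbf s}$ as its \emph{unique} monomial of maximal $\ff$-degree. For a Dyck path $p$ of length $k+1$ there are many minimal violators (essentially all compositions of $m_{i,j}+1$ into $k+1$ parts, subject to sub-path constraints), and it is not clear that each one arises as the strict $\ff$-leading term of a single adjoint derivative. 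In \cite{FFoL1} the analogous straightening works only because a total order is imposed to break ties among the monomials appearing in $\operatorname{ad}(u)(f^{m+1})$; showing that the $\ff$-degree alone separates them is precisely the new content here, and you have not supplied it. At minimum you would need an inductive argument (on $\ff$-degree, or on some partial order on the $\mathbf s$'s) rather than a one-shot identification, and it is not evident that this terminates without reintroducing a refinement to a total order.

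The paper takes a completely different route and never touches the relations $\operatorname{ad}(u)(f^{m+1})$ at all. It proceeds by induction on $|\lambda|=\sum m_i$. The base case is the fundamental weights: for $V(\varpi_k)=\Lambda^k\mathbb C^{n+1}$ one writes down, for each standard basis vector $w_{i_1}\wedge\cdots\wedge w_{i_k}$, all monomials $f^{\mathbf p}$ sending $v_{\varpi_k}$ to it, and checks by a direct calculation (your formula $(j-i+1)(n-j+1)$ enters exactly here, as a rearrangement inequality) that the minimum $\ff$-degree is attained at a unique such monomial, which is the one in $S(\varpi_k)$. This simultaneously gives the basis and the monomiality of $I^\ff(\varpi_k)$. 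The inductive step uses the Minkowski property $S(\lambda+\mu)=S(\lambda)+S(\mu)$ and the embedding $V(\lambda+\mu)\hookrightarrow V(\lambda)\otimes V(\mu)$: one shows that $\{f^{\mathbf s}v_{\lambda+\mu}:\mathbf s\in S(\lambda+\mu)\}$ is a basis compatible with the $\ff$-filtration by comparing with the tensor product $V^\ff(\lambda)\otimes V^\ff(\mu)$, and monomiality of $I^\ff(\lambda+\mu)$ then follows from that of the Cartan component $V^\ff(\lambda)\odot V^\ff(\mu)$. This approach trades your single hard lemma for two softer ingredients (an explicit fundamental-weight computation and a tensor-product argument), both of which are short.
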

So instead of considering a homogeneous total order, we adjust the $\mathbb{N}$-grading on root vectors, to obtain an $\mathbb{N}$-graded module with the same monomial basis, whose annihilating ideal is also monomial. So we are loosing less structure than in the totally ordered case but still gain the nice property of having a monomial ideal.\par
The first part in the theorem seems to be the same as the main theorem in \cite{FFoL1}, but they are essentially different: in the original PBW filtration (\emph{loc. cit.}), a particular total order is fixed in the very beginning to select elements in $S(\lambda)$ from $V(\lambda)$; when the new filtration is under consideration, an $\mathbb{N}$-filtration suffices.\par
Our main goal was a different one:  We wanted to introduce and study a PBW filtration and the corresponding degenerations for quantum groups. 

\subsection*{Quantum groups}
Analogous to the classical setup, a PBW filtration for quantum groups should be an $\mathbb{N}$-filtration of $U_q(\lie n^-)$ such that the associated graded algebra is isomorphic to $S_q(\lie n^-)$, the $q$-commutative polynomial algebra on the vector space $\lie n^-$. Here  $U_q(\g)$ is the Drinfeld-Jimbo quantum group associated to a simple Lie algebra $\g$ and $U_q(\lie n^-)$ is its negative part. For any reduced decomposition of $w_0$, the longest element in the Weyl group $W$ of $\g$, Lusztig \cite{Lusztig} constructed a family of quantum PBW root vectors $F_\beta\in U_q(\lie n^-)$ parametrized by $\Delta_+$, such that an analogue of the PBW theorem holds for $U_q(\lie n^-)$. \par
But if one tries to construct the PBW filtration,  requiring the quantum PBW root vectors $F_\beta$ to be of degree $1$, this fails to  give an $\mathbb{N}$-filtration on $U_q(\lie n^-)$ (counter-examples are provided in Section~\ref{counter-example}).
\par
Our first task is to seek for an $\mathbb{N}$-filtration on $U_q(\lie n^-)$ such that the associated graded algebra is $S_q(\lie n^-)$. When the Lie algebra $\lie n^-$ is of type $A$, $D$ or $E$, such a filtration can be derived using Hall algebras.

\subsection*{Filtration arising from Hall algebras}
A choice of an orientation of a simply-laced Dynkin diagram determines a quiver with its associated category of representations. The negative part of the corresponding quantized enveloping algebra can be realized as the Hall algebra of the quiver, which is defined in purely representation theoretic terms.

This Hall algebra approach to quantum groups defines a parametrization of the element $F_{[M]}$ of a PBW type basis of $U_q(\lie n^-)$ by isomorphism classes $[M]$ of finite dimensional representation of $Q$, and the multiplication of basis elements is described in terms of short exact sequences between representations.
An $\mathbb{N}$-valued degree function $\deg F_{[M]}$ can thus be viewed as a function on isomorphism classes of representations, also denoted by $\deg [M]$.

This opens up a way to study modified PBW filtrations and their defining degree functions in representation-theoretic terms. We give a representation-theoretic characterization of all degree functions such that the associated graded algebras are $q$-commutative polynomial algebras, which naturally leads to the canonical degree function studied in this paper:

\begin{thmb} 
A degree function on representations as above induces an $\mathbb{N}$-filtration on $U_q(\lie n^-)$ with associated graded algebra isomorphic to a $q$-commutative polynomial algebra $S_q(\lie n^-)$ if and only if it is of the form $\deg [M]=\dim{\rm Hom}(V,M)$, for a representation $V$ of $Q$ containing all non-projective and all simple projective indecomposable representations of $Q$ as a direct summand.
\end{thmb}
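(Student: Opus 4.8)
The plan is to work throughout inside the Ringel--Hall algebra $\mathcal H(Q)\cong U_q(\lie n^-)$, where $\deg$ is a function $[M]\mapsto\deg[M]$ on isomorphism classes and the associated filtration is the one whose degree-$\le s$ piece is spanned by the PBW monomials of degree $\le s$. First I would fix a total refinement $M_1,\dots,M_N$ of the order ``$\Hom(M_i,M_j)\neq 0$'' on the indecomposables (possible since $Q$ is Dynkin), so that the ordered monomials in $u_{M_1},\dots,u_{M_N}$ form a basis and Ringel's straightening relations read $u_{M_j}u_{M_i}=q^{-\langle M_i,M_j\rangle}u_{M_i}u_{M_j}+\sum_L c^L_{ij}\,u_{[L]}$ for $i<j$, the sum over $[L]$ with $\underline\dim L=\underline\dim M_i+\underline\dim M_j$ and $L\not\cong M_i\oplus M_j$, each such $L$ sitting in a non-split short exact sequence with ends among $M_i,\dots,M_j$. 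Extending $\deg$ additively, I would record the reduction: the filtration is an algebra filtration with $\gr$ a $q$-commutative polynomial algebra on the $\overline{u_{M_i}}$ \emph{if and only if} (i) $\deg[M_i]\ge 1$ for all $i$ (otherwise $\gr_0$ is larger than the scalars), (ii) $\deg$ is additive, and (iii) $\deg[L]<\deg[A]+\deg[B]$ for every non-split short exact sequence $0\to A\to L\to B\to 0$. For ``only if'' one uses that $u_{[L]}$ and $u_{[A\oplus B]}$ both occur in $u_{[B]}u_{[A]}$, so equality in (iii) would force two distinct Hall basis elements to have proportional leading PBW monomials, contradicting polynomiality; conversely (i)--(iii) make every error term drop strictly in degree, and the PBW basis pins down the Hilbert series, hence the algebra structure, the $q$-commutation constants being those of the Euler form.

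For the ``if'' direction I would verify that $\deg[M]=\dim\Hom(V,M)$ satisfies (i)--(iii). Additivity of $\Hom(V,-)$ gives (ii). For (i): submodules of projectives over the hereditary algebra $kQ$ are projective, so every indecomposable projective has a simple projective in its socle, whence $\Hom(V,P)\neq 0$ as $V$ contains all simple projectives; and $\Hom(V,M)\neq 0$ for $M$ a non-projective indecomposable since $M$ is a summand of $V$. For (iii), apply $\Hom(V,-)$ to a non-split $0\to A\to L\to B\to 0$ of class $0\neq\xi\in\Ext^1(B,A)$: left-exactness gives $\deg[L]\le\deg[A]+\deg[B]$, with equality exactly when the connecting map $\Hom(V,B)\to\Ext^1(V,A)$ vanishes; but $\Ext^1(B,A)\neq 0$ forces a non-projective indecomposable summand $B'$ of $B$ on which $\xi$ restricts non-trivially, and since $B'$ is then a summand of $V$ the composite $V\twoheadrightarrow B'\hookrightarrow B$ is carried by $\xi$ to a non-zero class, so the connecting map is non-zero and (iii) is strict.

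For the ``only if'' direction, granting that (i)--(iii) are forced as above, I would reconstruct $V$. Since $Q$ is Dynkin, $\bigl[\dim\Hom(M_i,M_j)\bigr]$ is unitriangular, so $\mathbf h_{M_j}:=\dim\Hom(M_j,-)$ form a $\mathbb Z$-basis of the additive functions and one writes $\deg=\sum_j c_j\,\mathbf h_{M_j}$ uniquely with $c_j\in\mathbb Z$; it then suffices to show $c_j\ge 0$, with $c_j\ge 1$ for $M_j$ non-projective or simple projective, so that $V:=\bigoplus_j M_j^{\oplus c_j}$ works. The engine is the minimal right almost split morphism ending at $M_j$. For $M_j$ non-projective, applying each $\mathbf h_{M_k}$ to the Auslander--Reiten sequence $0\to\tau M_j\to E_j\to M_j\to 0$ and using its factorisation property yields $c_j=\deg[\tau M_j]+\deg[M_j]-\deg[E_j]$, which is $\ge 1$ by (iii). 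For $M_j$ simple projective, $\Hom(X,M_j)\neq 0$ with $X$ indecomposable forces $X\cong M_j$, so $\mathbf h_{M_k}(M_j)=\delta_{kj}$ and $c_j=\deg[M_j]\ge 1$ by (i). For the remaining non-simple indecomposable projectives $P$, the radical inclusion $0\to\operatorname{rad}P\to P\to P/\operatorname{rad}P\to 0$ gives $\mathbf h_{M_k}(P)=\mathbf h_{M_k}(\operatorname{rad}P)+\delta_{kP}$, hence $c_P=\deg[P]-\deg[\operatorname{rad}P]$.

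The main obstacle I expect is this last point: showing $\deg[P]\ge\deg[\operatorname{rad}P]$, i.e. $c_P\ge 0$, for a non-simple indecomposable projective $P$. Non-split sequences with $P$ as the middle term only bound $\deg[P]$ from above; to bound it from below one should instead embed $P$ as a non-split submodule of a suitable indecomposable — available whenever $P$ is non-injective — and feed this into (iii), so that only the projective--injective indecomposables remain genuinely delicate. For these one must exploit the specific combinatorics of type $A_n$ and the paper's standing conventions on degree functions; once all $c_j$ are shown non-negative and appropriately positive, $V=\bigoplus_j M_j^{\oplus c_j}$ finishes the argument.
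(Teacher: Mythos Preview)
Your overall strategy coincides with the paper's: both write an additive degree function as $w=\sum_j c_j\,w_{M_j}$ with $w_{M_j}=\dim\Hom(M_j,-)$ (the paper via projective resolutions of the simple functors in the categories $\mathcal F$ and $\mathcal G$, you via unitriangularity of the Hom-matrix) and extract $c_j$ for a non-projective indecomposable $M_j$ from the Auslander--Reiten sequence $0\to\tau M_j\to E_j\to M_j\to 0$, obtaining $c_j=w(\tau M_j)+w(M_j)-w(E_j)\ge 1$ by strict sub-additivity; the simple projective case is handled identically via normalization. You go further than the paper in explicitly isolating the remaining case of a non-simple indecomposable projective $P$, where $c_P=w(P)-w(\operatorname{rad}P)$, and in correctly observing that (i)--(iii) give no direct lower bound here. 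The paper's proof does not treat this case separately: its assertion that admissibility forces $a_V\ge 0$ for \emph{all} indecomposable $V$ rests on the AR sequence ending in $V$, which exists only when $V$ is non-projective.

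Your proposed workaround cannot succeed, because the inequality $c_P\ge 0$ is in fact false. Take $Q=(1\to 2)$, so the indecomposables are $S_1$ (the unique non-projective), $S_2$ (simple projective), and $P_1$ (projective-injective with $\operatorname{rad}P_1=S_2$). The additive function with $w(S_1)=w(P_1)=1$ and $w(S_2)=10$ is strongly admissible: since $\Ext^1$ is nonzero only from $S_1$ to $S_2$, every non-split short exact sequence has middle term obtained from the direct sum of its ends by replacing $r\ge 1$ copies of $S_1\oplus S_2$ by $r$ copies of $P_1$, whence $w(\text{middle})=w(\text{ends})-10r$. The associated graded of $U_q(\lie n^-)$ is therefore $S_q(\lie n^-)$. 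Yet $c_{P_1}=w(P_1)-w(S_2)=-9$, so $w\neq\dim\Hom(V,-)$ for any honest representation $V$. Thus the ``only if'' direction fails as literally stated, both in the paper's argument and in yours; your instinct that the projective-injective case is ``genuinely delicate'' is right, but embedding $P_1$ non-splittingly is impossible here since $P_1$ is injective, and no appeal to type-$A_n$ combinatorics can rescue an assertion that already fails in $A_2$.
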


This theorem is proved using basic methods of Auslander-Reiten theory.

\subsection*{Application to \texorpdfstring{$\lie{sl}_{n+1}$}{the special linear algebra}}
We apply this construction to the case where $\g$ is the Lie algebra $\mathfrak{sl}_{n+1}$, and fix a reduced decomposition of $w_0$. For $F_{\alpha_i + \ldots  + \alpha_j} \in U_q(\lie n^-)$, a quantum PBW root vector corresponding to the fixed reduced decomposition of $w_0$, we define  $\text{deg}(F_{\alpha_i + \ldots  + \alpha_j})=(j-i+1)(n-j+1)$. 
The degree function considered here then canonically arises from the representation $V$ containing one copy of each indecomposable representation of $Q$ as a direct summand (see Definition~\ref{deg-fct}). 
Hence Theorem B applies and the algebra $U_q(\mathfrak{n}^-)$ becomes $\mathbb{N}$-filtered, such that the associated graded algebra is a $q$-commutative polynomial algebra $S_q(\lie n^-)$ on $\lie n^-$. \par
For a dominant weight $\lambda$, let $V_q(\lambda)$ be the finite dimensional (type 1) irreducible representation of $U_q(\g)$ of highest weight $\lambda$ and $\mathbf{v}_\lambda$ be a highest weight vector. 
The associated graded $S_q(\lie n^-)$-module will be denoted by $V_q^\ff(\lambda)$ with annihilating ideal $I_q^\ff(\lambda)\subset S_q(\lie n^-)$.
\par
We obtain a quantum version of Theorem A:

\begin{thmc} 
For a dominant weight $\lambda$,
\begin{enumerate}
\item $\{F^\mathbf{s} \mathbf{v}_\lambda|\ \mathbf{s}\in S(\lambda)\}$ forms a basis of $V_q^\ff(\lambda)$, where $S(\lambda)$ is the set of integral points in $P(\lambda)$, and $F^{\mathbf{s}}=F_{\beta_1}^{s_1}F_{\beta_2}^{s_2}\cdots F_{\beta_N}^{s_N}$ for $\mathbf{s}=(s_1,s_2,\cdots,s_N)\in S(\lambda)$;
\item $I_q^\ff(\lambda)$ is a monomial ideal.
\end{enumerate}
\end{thmc}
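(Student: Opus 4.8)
The plan is to reduce both statements, via a dimension count resting on Theorem~A, to a single inclusion of ideals, and then to prove that inclusion by quantizing the key lemma of \cite{FFoL1}. Write $\Delta_+=\{\alpha_{p,q}:=\alpha_p+\dots+\alpha_q\mid 1\le p\le q\le n\}$, put $m_\beta=\langle\lambda,\beta^\vee\rangle$, and recall from \cite{FFoL1} that $P(\lambda)$ is cut out by the Dyck-path inequalities $\sum_{\beta\in\mathbf p}s_\beta\le m_{\alpha_{p,q}}$, one for each Dyck path $\mathbf p$ from $\alpha_{p,p}$ to $\alpha_{q,q}$. Thus $S(\lambda)$ is precisely the complement in $\mathbb{N}^N$ of the monomial ideal $\widetilde I(\lambda)\subseteq S_q(\lie n^-)$ generated by the monomials $\prod_{\beta\in\mathbf p}F_\beta^{t_\beta}$ with $\mathrm{supp}(\mathbf t)\subseteq\mathbf p$ and $\sum_{\beta\in\mathbf p}t_\beta=m_{\alpha_{p,q}}+1$; hence $\dim_{\mathbb{C}(q)}S_q(\lie n^-)/\widetilde I(\lambda)=|S(\lambda)|$, which by Theorem~A equals $\dim V(\lambda)=\dim_{\mathbb{C}(q)}V_q(\lambda)=\dim_{\mathbb{C}(q)}V_q^\ff(\lambda)$. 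Consequently both assertions follow from the inclusion $\widetilde I(\lambda)\subseteq I_q^\ff(\lambda)$: it exhibits $V_q^\ff(\lambda)$ as a quotient of $S_q(\lie n^-)/\widetilde I(\lambda)$, the equality of dimensions forces this quotient map to be an isomorphism, and then $I_q^\ff(\lambda)=\widetilde I(\lambda)$ is monomial with complementary monomials $\{F^{\mathbf s}\mid\mathbf s\in S(\lambda)\}$, giving a basis of $V_q^\ff(\lambda)$.

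So it suffices to show $\prod_{\beta\in\mathbf p}F_\beta^{t_\beta}\,\mathbf v_\lambda=0$ in $V_q^\ff(\lambda)$ for each Dyck path $\mathbf p$ from $\alpha_{p,p}$ to $\alpha_{q,q}$ and each $\mathbf t$ as above, which I would do in two steps mirroring \cite{FFoL1}. First, for every $\beta\in\Delta_+$ one already has $F_\beta^{m_\beta+1}\mathbf v_\lambda=0$ in $V_q(\lambda)$: the Lusztig root vectors $E_\beta,F_\beta$ and $K_\beta^{\pm1}$ span a copy of $U_q(\mathfrak{sl}_2)$, and $\mathbf v_\lambda$ is a highest weight vector of it of highest weight $m_\beta$, since $E_\beta\mathbf v_\lambda=0$ (the weight $\lambda+\beta$ does not occur) and $K_\beta$ acts by $q_\beta^{m_\beta}$.

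Second, I would spread this relation out along $\mathbf p$ using the action of $E_a$ on $V_q(\lambda)$. Expressing $E_a$ through the skew-derivations $r_a,{}_ar$ of $U_q(\lie n^-)$ --- which in type $A$ only shorten a root vector $F_{\alpha_{p,q}}$ at one endpoint, producing a scalar multiple of $F_{\alpha_{p+1,q}}$ or $F_{\alpha_{p,q-1}}$, and annihilate it when $a$ is an interior index of $[p,q]$ or lies outside it --- and using $E_a\mathbf v_\lambda=0$, one computes $E_{a_1}\cdots E_{a_l}\bigl(F_{\alpha_{p,q}}^{m_{\alpha_{p,q}}+1}\mathbf v_\lambda\bigr)$ for a word $(a_1,\dots,a_l)$ chosen, exactly as in \cite{FFoL1}, to steer the configuration of intervals toward $\mathbf t$. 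The left-hand side is $0$; on the right-hand side every summand is of the form $F^{\mathbf s}\mathbf v_\lambda$ with $\sum_\beta s_\beta\beta$ fixed, and --- this is the crux --- the normalization $\deg F_{\alpha_{i,j}}=(j-i+1)(n-j+1)$ of Theorem~A is precisely what makes the Dyck monomial $\prod_{\beta\in\mathbf p}F_\beta^{t_\beta}$ the unique summand of maximal $\ff$-degree, its coefficient being a product of $q$-integers and hence nonzero over $\mathbb{C}(q)$. Projecting onto the corresponding graded component of $V_q^\ff(\lambda)$ then gives $\prod_{\beta\in\mathbf p}F_\beta^{t_\beta}\mathbf v_\lambda=0$, as needed.

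Everything outside this second step is formal once $\widetilde I(\lambda)\subseteq I_q^\ff(\lambda)$ is in hand, so I expect the technical heart --- and the main obstacle --- to be exactly that computation: controlling all the $q$-powers in the iterated skew-derivations, pinning down the behaviour of $r_a$ and ${}_ar$ on the type-$A$ root vectors so that repeated application moves only among sub-intervals of $[p,q]$, and, most delicately, verifying the combinatorial claim that for a suitable word the Dyck monomial is the unique monomial of top $\ff$-degree occurring in $E_{a_1}\cdots E_{a_l}\bigl(F_{\alpha_{p,q}}^{m_{\alpha_{p,q}}+1}\mathbf v_\lambda\bigr)$. This is a $q$-deformation of the main lemma of \cite{FFoL1}, and is where the particular form of the degree function must be used.
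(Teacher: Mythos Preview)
Your strategy is a legitimate route to the result, but it is \emph{not} the one the paper takes, and the key step you flag as ``the main obstacle'' is precisely the step the paper avoids. The paper's proof of Theorem~C is a pure specialization argument: since $\dim V_q^{\ff}(\lambda)=\dim V_q(\lambda)=\dim V(\lambda)=|S(\lambda)|$, it suffices to show that $\{F^{\mathbf p}\mathbf v_\lambda\mid\mathbf p\in S(\lambda)\}$ is linearly independent in $V_q^{\ff}(\lambda)$. A hypothetical relation $\sum_{\mathbf p}c_{\mathbf p}F^{\mathbf p}\mathbf v_\lambda=0$ with $c_{\mathbf p}\in\mathbf A_1$ is specialized at $q=1$; by Theorem~A the classical vectors $f^{\mathbf p}v_\lambda$ are independent in $V^{\ff}(\lambda)$, so every $c_{\mathbf p}$ lies in $(q-1)\mathbf A_1$, and the usual ``divide out the maximal power of $(q-1)$'' argument forces all $c_{\mathbf p}=0$. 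Monomiality of $I_q^{\ff}(\lambda)$ is deduced the same way: for $\mathbf s\notin S(\lambda)$, expand $F^{\mathbf s}\mathbf v_\lambda$ in the basis just obtained, specialize, and use the classical monomiality (Theorem~A) to kill the coefficients. No quantum straightening, no skew-derivations, no control of $q$-powers is needed.

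By contrast, your plan builds the monomial ideal $\widetilde I(\lambda)$ first and tries to prove $\widetilde I(\lambda)\subseteq I_q^{\ff}(\lambda)$ by a $q$-deformed version of the \cite{FFoL1} spreading lemma. The reduction you give (dimension count $\Rightarrow$ equality of ideals $\Rightarrow$ both assertions) is correct, and the first step $F_\beta^{m_\beta+1}\mathbf v_\lambda=0$ is standard. But the second step --- that for a suitable word the Dyck monomial is the \emph{unique} term of top $\ff$-degree in $E_{a_1}\cdots E_{a_l}\bigl(F_{\alpha_{p,q}}^{m_{\alpha_{p,q}}+1}\mathbf v_\lambda\bigr)$ --- is exactly the content the paper establishes only \emph{classically}, and it does so by an entirely different route (explicit computation for fundamental weights in $\bigwedge^k\mathbb C^{n+1}$, then a Minkowski-sum/Cartan-component induction), never by straightening along a single Dyck path. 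In \cite{FFoL1} the analogous uniqueness holds only for a refined total order, not for an $\mathbb N$-grading, so transplanting that argument to the $\ff$-filtration is a genuine new claim, and you have not proved it. If you could complete it, you would obtain more than the paper does: explicit monomial generators of $I_q^{\ff}(\lambda)$ and a proof of Theorem~C independent of Theorem~A (you only use $|S(\lambda)|=\dim V(\lambda)$, which is Theorem~\ref{ffl2} from \cite{FFoL1}). But as it stands this is an outline with its hardest lemma missing, whereas the paper's specialization argument is a few lines once Theorem~A is in hand.
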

\par
The theorem follows from specializing the statements to the classical case and using Theorem A.
\bigskip 

The paper is organized as follows: In Section~\ref{one} we recall the PBW filtration, introduce the new grading and show that with respect to this new grading, the annihilating ideal is monomial. In Section~\ref{two}, we give the necessary definitions for quantum groups and explain the formerly known degree functions. In Section~\ref{three}, the new grading is motivated by considerations in Hall algebras, while the Theorem C is proved in Section~\ref{four}.


\section{PBW filtrations}\label{one}

A basic reference on Lie algebras is \cite{Hum}.

\subsection{Background}

Let $\g$ be the Lie algebra $\mathfrak{sl}_{n+1}$ of traceless matrices over $\mc$. Fix a Cartan decomposition $\g=\mathfrak{n}^+\oplus\mathfrak{h}\oplus\mathfrak{n}^-$ and a set of simple roots $\Pi=\{\alpha_1,\cdots,\alpha_n\}$ of $\g$. The positive roots of $\g$ are then of the form $\Delta_{+}=\{\alpha_{i,j}:=\alpha_i+\cdots+\alpha_j|\ 1\leq i\leq j\leq n\}$. Let $\varpi_i$, $i=1,\cdots,n$ be the fundamental weights, $\mathcal{P}$ be the weight lattice and $\mathcal{P}_+=\sum_{i=1}^n\mathbb{N}\varpi_i$ be the set of dominant weights. For a dominant integral weight $\lambda=m_1\varpi_1+m_2\varpi_2+\cdots+m_n\varpi_n\in\mathcal{P}_+$, let $V(\lambda)$ be the finite dimensional irreducible representation of $\g$ of highest weight $\lambda$. 
\par

\subsection{Original PBW filtration}\label{Sec:2.2}
We recall here the PBW filtration on $U(\lie n^-)$ and finite dimensional simple $\lie{sl}_{n+1}$-modules. For this, we define (following \cite{FFoL1}) a filtration
\[
U(\lie n^-)_s := {\sspan}_{\mathbb{C}}\{ x_{i_1} \cdots x_{i_\ell} \mid x_{i_j} \in \lie n^- \, , \, \ell \leq s\}.
\]
With this filtration, $U(\lie n^-)$ is a filtered algebra and by the PBW theorem, the associated graded algebra is $S(\lie n^-)$. We may also view $S(\lie n^-)$ as the universal enveloping algebra of $\lie n^{-,  a}$, the abelian Lie algebra on the vector space $\lie n^-$.
\par
Let $V(\lambda)$ be a simple $\lie{sl}_{n+1}$-module and $v_\lambda$ a highest weight vector. We have an induced filtration
\[
V(\lambda)_s := U(\lie n^{-})_s.v_\lambda.
\]
The associated graded module is by construction an $\lie n^{-, a}$-module denoted $V^a(\lambda)$. There is a bigger Lie algebra acting on $V^a(\lambda)$, namely $\lie g^a := \lie n^+ \oplus \lie h \oplus \lie n^{-,  a}$, where the action of $\lie n^+\oplus \lie h$ on $\lie n^{-, a}$ is obtained via the adjoint action of $\lie{sl}_{n+1}$.  With this we can state the following theorem, due to \cite{FFoL1}:
\begin{theorem}\label{ffl1} For $\lambda \in \mathcal{P}_+$, as an $S(\lie n^-)$-module,
\[V^a(\lambda) \cong S(\lie n^-)/I(\lambda),\]
where $I(\lambda)$ is the ideal generated by $\{ U(\lie n^+).f_\alpha^{(\lambda,\alpha)+1}|\ \alpha>0\} \subset S(\lie n^-)$.
\end{theorem}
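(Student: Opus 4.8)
The plan is to exhibit the natural surjection $S(\lie n^-)/I(\lambda)\twoheadrightarrow V^a(\lambda)$ and prove it is an isomorphism by a dimension count. Since $V^a(\lambda)$ is the associated graded of a filtration on $V(\lambda)$, one has $\dim V^a(\lambda)=\dim V(\lambda)$, so it suffices to produce the surjection and then show $\dim S(\lie n^-)/I(\lambda)\le\dim V(\lambda)$.

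\textbf{The easy inclusion.} Let $\bar v_\lambda$ denote the image of $v_\lambda$ in $V^a(\lambda)$ and, for $\alpha>0$, set $m_\alpha=(\lambda,\alpha)$. By $\mathfrak{sl}_2$-theory $f_\alpha^{m_\alpha+1}v_\lambda=0$ already holds in $V(\lambda)$, and since the monomial $f_\alpha^{m_\alpha+1}$ has filtration degree exactly $m_\alpha+1$, its class annihilates $\bar v_\lambda$ in $V^a(\lambda)$. As $V^a(\lambda)$ is a module over $\lie g^a=\lie n^+\oplus\lie h\oplus\lie n^{-,a}$ with $\bar v_\lambda$ a highest weight vector, we have $\lie n^+.\bar v_\lambda=0$; using $[y,z].\bar v_\lambda=y.(z.\bar v_\lambda)-z.(y.\bar v_\lambda)$ for $y\in\lie n^+$ and $z\in S(\lie n^-)$, an induction on the length of $x\in U(\lie n^+)$ shows that every element of $U(\lie n^+).f_\alpha^{m_\alpha+1}$ annihilates $\bar v_\lambda$. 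Hence the ideal $I(\lambda)$ kills $\bar v_\lambda$, giving the surjection.

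\textbf{The upper bound.} I would prove $\dim S(\lie n^-)/I(\lambda)\le\dim V(\lambda)$ in two combinatorial steps. (i) \emph{Spanning.} Expanding the generators $U(\lie n^+).f_\alpha^{m_\alpha+1}$ explicitly — the essential relations are indexed by Dyck paths — and fixing a suitable homogeneous order on the monomials $f^{\mathbf s}=\prod_i f_{\beta_i}^{s_i}$ of $S(\lie n^-)$, one checks that these relations let one rewrite any $f^{\mathbf s}$ with the integer point $\mathbf s$ outside $P(\lambda)$ as a combination of strictly smaller monomials; a downward induction then shows $S(\lie n^-)/I(\lambda)$ is spanned by $\{f^{\mathbf s}:\mathbf s\in S(\lambda)\}$, so $\dim S(\lie n^-)/I(\lambda)\le\#S(\lambda)$. (ii) \emph{Counting.} I would first prove the two polytope facts that drive the theory, namely that $P(\lambda)$ is a normal lattice polytope and that $P(\lambda)+P(\mu)=P(\lambda+\mu)$ for $\lambda,\mu\in\mathcal P_+$ (so $S(\lambda+\mu)=S(\lambda)+S(\mu)$), and then show that $\{f^{\mathbf s}v_\lambda:\mathbf s\in S(\lambda)\}$ is linearly independent in $V(\lambda)$ by induction along the weight lattice, adding one fundamental weight at a time. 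The step from $\lambda$ to $\lambda+\varpi_i$ uses the Cartan-component projection $V(\lambda)\otimes V(\varpi_i)\to V(\lambda+\varpi_i)$, the coproduct $\Delta(f)=f\otimes 1+1\otimes f$ for $f\in\lie n^-$, and the decomposition $S(\lambda+\varpi_i)=S(\lambda)+S(\varpi_i)$ to reduce independence for $\lambda+\varpi_i$ to the cases $\lambda$ and $\varpi_i$. Linear independence gives $\#S(\lambda)\le\dim V(\lambda)$; chaining this with (i) and the surjection forces every inequality to be an equality, so the surjection is an isomorphism and $\{f^{\mathbf s}\bar v_\lambda:\mathbf s\in S(\lambda)\}$ emerges as a monomial basis.

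\textbf{Main obstacle.} The real work is the pair of combinatorial inputs to the dimension count: that the ideal generated by the root-vector powers and their $\lie n^+$-translates is large enough to span $S(\lie n^-)/I(\lambda)$ by the $P(\lambda)$-monomials (step (i)), and that these same monomials are linearly independent in $V(\lambda)$ (step (ii)) — the latter resting on the normality and Minkowski-sum properties of $P(\lambda)$, which themselves must be established by polytope combinatorics. The easy inclusion, the reduction to a dimension count, and the final bookkeeping are formal by comparison.
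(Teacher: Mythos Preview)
The paper does not prove this theorem; it is stated with the attribution ``due to \cite{FFoL1}'' and used as a black box. Your proposal is a faithful outline of the argument in \cite{FFoL1}: the surjection $S(\lie n^-)/I(\lambda)\to V^a(\lambda)$ from the obvious vanishing relations, a spanning argument by straightening monomials outside $P(\lambda)$ via the Dyck-path relations, and linear independence by induction on the height of $\lambda$ using the Minkowski-sum decomposition $S(\lambda+\mu)=S(\lambda)+S(\mu)$ together with the embedding of the Cartan component into $V(\lambda)\otimes V(\mu)$.

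One small point of wording: for the independence step you speak of the ``Cartan-component projection $V(\lambda)\otimes V(\varpi_i)\to V(\lambda+\varpi_i)$''. In practice one uses the \emph{embedding} $V(\lambda+\varpi_i)\hookrightarrow V(\lambda)\otimes V(\varpi_i)$ sending $v_{\lambda+\varpi_i}$ to $v_\lambda\otimes v_{\varpi_i}$ and proves independence inside the tensor product; a projection would not preserve independence in the right direction. This is likely what you meant, and the present paper's Proposition~\ref{Prop:Indep} and Proposition~\ref{Prop:Proj} rerun exactly this tensor-product mechanism for the new filtration. Beyond that your sketch is sound, and you correctly identify the Dyck-path straightening and the polytope combinatorics (normality and Minkowski sum) as the substantive content---these occupy the bulk of \cite{FFoL1}.
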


It is important to notice here, that the annihilating ideal $I(\lambda)$ is not monomial. Let $\lambda = \varpi_2$ and $V(\varpi_2) = \bigwedge^2 \mathbb{C}^{n+1}$, then \[f_{\alpha_1 + \alpha_2} f_{\alpha_2 + \alpha_3} - f_{\alpha_2} f_{\alpha_1 + \alpha_2 + \alpha_3} \in I(\varpi_2)\] but none of the two monomials is.

\subsection{Total order and monomial ideal}\label{Sec:2.3}
Here, we recall the total order on monomials in $S(\lie n^-)$ introduced in \cite{FFoL1} and the definition of Dyck paths. This is an $\mathbb{N}^N$-order instead of an $\mathbb{N}$-order as for example the PBW partial order.
\par
Let the total ordering of the generators $f_{\alpha}$, $\alpha \in \Delta_+$, of $S(\lie n^-)$ be
\[
f_{\alpha_{n,n}}>f_{\alpha_{n-1,n}}>f_{\alpha_{n-2,n}}>\cdots>
f_{\alpha_{2,3}}>f_{\alpha_{2,2}}>f_{\alpha_{1,n}}>\cdots>f_{\alpha_{1,1}}.
\]

Then, using the reverse homogeneous lexicographic order, we obtain a total order on monomials in $S(\lie n^-)$. Since it is homogeneous, this total order is a refinement of the PBW partial order. We obtain an induced filtration on $V^a(\lambda)$ and denote the associated graded module $V^t(\lambda)$. This is an $S(\lie n^-)$-module and, by construction, the annihilating ideal is monomial.
\par
 There is an explicit description of a basis of this ideal or, equivalently, a monomial basis of $V^t(\lambda)$, due to \cite{FFoL3}. For this description, we recall the notion of Dyck paths:

\begin{definition} A (type $A$) Dyck path is a sequence of positive roots $\mathbf{p}=(\beta(0),\beta(1),\cdots,\beta(k))$ for $k\geq 0$ starting from a simple root $\beta(0)=\alpha_i=\alpha_{i,i}$ and ending at a simple root $\beta(k)=\alpha_j=\alpha_{j,j}$ for some $j\geq i$, such that if $\beta(s)=\alpha_{t,r}$, then $\beta(s+1)=\alpha_{t,r+1}$ or $\beta(s+1)=\alpha_{t+1,r}$. We denote the set of all Dyck path starting in $i$ and ending in $j$ by $\mathbb{D}_{i,j}$.
\end{definition}
With this we introduce a polytope $P(\lambda) \subset \mathbb{R}_{\geq 0}^{|\Delta_+|}$:
\[
P(\lambda) = \{ (s_\alpha)\in \mathbb{R}_{\geq 0}^{|\Delta_+|} \mid \sum_{\alpha \in \mathbf{p} } s_\alpha \leq m_i + \ldots + m_j \; , \; \forall \; \mathbf{p} \in \mathbb{D}_{i,j} \}.
\]
We denote the set of latttice points in $P(\lambda)$ by
\[
S(\lambda) := P(\lambda) \cap \mathbb{Z}^{|\Delta_+|},
\]
and for any $\mathbf{s} \in S(\lambda)$, we denote $f^{\mathbf{s}} := \prod_{\alpha > 0 } f_\alpha^{s_\alpha}$. 
The following theorem is due to \cite{FFoL1} and \cite{FFoL3}:
\begin{theorem}\label{ffl2}
The set $\{ f^{\mathbf{s}}v_\lambda \mid \mathbf{s} \in S(\lambda)\}$ is a basis of $V^t(\lambda)$. Furthermore, for any $\lambda, \mu \in \mathcal{P}_+$, their Minkowski sums satisfy:
\[
P(\lambda+ \mu) = P(\lambda) + P(\mu) \; \text{ and } S(\lambda + \mu) = S(\lambda) + S(\mu).
\]
\end{theorem}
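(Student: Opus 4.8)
The plan is to establish both statements together by induction on $\sum_i m_i$, with the fundamental weights $\varpi_1,\dots,\varpi_n$ (and $\lambda=0$) as base cases; the Minkowski equality for the polytopes is proved first, and the Minkowski equality for lattice points is then used inside the induction. The three ingredients are a spanning statement giving $\dim V(\lambda)\le|S(\lambda)|$, the Minkowski decomposition, and a matching lower bound obtained from tensor products.

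\emph{Spanning.} First I would show that $\{f^{\mathbf s}v_\lambda\mid\mathbf s\in S(\lambda)\}$ spans $V^a(\lambda)$, hence $V^t(\lambda)$. By Theorem~\ref{ffl1} the ideal $I(\lambda)$ is generated by $U(\lie n^+).f_\alpha^{(\lambda,\alpha)+1}$, and since the annihilating ideal of $V^t(\lambda)$ is monomial it suffices to prove: for every exponent vector $\mathbf s$ violating a Dyck path inequality $\sum_{\alpha\in\mathbf p}s_\alpha\le m_i+\dots+m_j$ with $\mathbf p\in\mathbb D_{i,j}$, some element of $I(\lambda)$ has leading monomial (for the reverse homogeneous lexicographic order of Section~\ref{Sec:2.3}) dividing $f^{\mathbf s}$; then $f^{\mathbf s}v_\lambda$ reduces modulo strictly smaller monomials and one concludes by Noetherian induction. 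The element is produced by applying a suitable monomial in the $e_i$ to $f_{\alpha_{i,j}}^{m_i+\dots+m_j+1}$, thereby ``spreading'' this power along the path $\mathbf p$; verifying that the resulting leading monomial is indeed the expected Dyck path monomial is where the precise total order must be used, and this is the main combinatorial obstacle.

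\emph{Minkowski property.} The inclusion $P(\lambda)+P(\mu)\subseteq P(\lambda+\mu)$ is immediate from additivity of the defining inequalities. For the reverse inclusion I would decompose a point of $P(\lambda+\mu)$ by a max-flow / LP-duality argument tailored to the network structure of the Dyck path constraints, reducing to $\mu=\varpi_k$ if convenient. To pass from polytopes to lattice points the decomposition must be chosen integral, i.e. one needs normality of $P(\lambda)$; this I would extract from the combinatorics of Dyck paths, most economically in the form ``from any $\mathbf s\in S(\lambda+\varpi_k)$ one may peel off a summand lying in $S(\varpi_k)$'', which is exactly the form used below.

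\emph{Base case and the lower bound.} For $\lambda=\varpi_k$ one has $V(\varpi_k)=\bigwedge^k\mathbb C^{n+1}$; here the Dyck path inequalities force $s_\alpha=0$ outside a rectangular sub-poset of $\Delta_+$ of shape $k\times(n+1-k)$ and the remaining coordinates to be $\{0,1\}$-valued and supported on an antichain, so $|S(\varpi_k)|=\binom{n+1}{k}=\dim V(\varpi_k)$ and linear independence of the corresponding vectors is checked directly. For the inductive step, write $\lambda=\lambda'+\varpi_k$ and realize $V(\lambda)$ as the Cartan component $U(\lie n^-).(v_{\lambda'}\otimes v_{\varpi_k})\subseteq V(\lambda')\otimes V(\varpi_k)$, so that $f^{\mathbf s}v_\lambda$ corresponds to $f^{\mathbf s}(v_{\lambda'}\otimes v_{\varpi_k})=\sum_{\mathbf a+\mathbf b=\mathbf s}\binom{\mathbf s}{\mathbf a}f^{\mathbf a}v_{\lambda'}\otimes f^{\mathbf b}v_{\varpi_k}$. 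Passing to a suitable associated graded of the tensor product, in which by the inductive hypothesis and the base case the monomials indexed by $S(\lambda')$ and by $S(\varpi_k)$ form bases, I would single out for each $\mathbf s\in S(\lambda)=S(\lambda')+S(\varpi_k)$ the summand with $\mathbf a\in S(\lambda')$ maximal in the monomial order among admissible decompositions $\mathbf a+\mathbf b=\mathbf s$, $\mathbf b\in S(\varpi_k)$; distinctness of these leading summands gives a triangular transition and hence linear independence, so $|S(\lambda)|\le\dim V(\lambda)$. Combined with the spanning bound this forces equality, so $\{f^{\mathbf s}v_\lambda\mid\mathbf s\in S(\lambda)\}$ is a basis of $V^t(\lambda)$ (and of $V^a(\lambda)$), and $S(\lambda+\mu)=S(\lambda)+S(\mu)$ follows by iterating the fundamental-weight case. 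The delicate point here --- the secondary obstacle --- is to set up the filtration on $V(\lambda')\otimes V(\varpi_k)$ correctly and to control the straightening of the monomials $f^{\mathbf a}v_{\lambda'}$ with $\mathbf a\notin S(\lambda')$ well enough that the distinguished summand is not cancelled, which once more hinges on the compatibility of the chosen total order with the Dyck path relations.
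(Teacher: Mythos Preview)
The paper does not prove Theorem~\ref{ffl2} at all: it is stated as a result quoted from \cite{FFoL1} and \cite{FFoL3} and is used throughout as a black box. So there is no ``paper's own proof'' to compare your proposal against.

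That said, your outline is a faithful sketch of the strategy actually carried out in those references: the spanning argument via straightening relations obtained by applying $U(\lie n^+)$ to the powers $f_{\alpha_{i,j}}^{m_i+\dots+m_j+1}$ (this is the core of \cite{FFoL1}), the purely combinatorial Minkowski/normality property of the polytopes $P(\lambda)$ (done in \cite{FFoL1} and later subsumed under the marked chain polytope framework of \cite{ABS1,Fo1}), and the linear independence via the Cartan component of $V(\lambda')\otimes V(\varpi_k)$ with a triangularity argument (this is \cite[Proposition~1.11]{FFoL3}, which the present paper also invokes as Proposition~\ref{Prop:Indep}). The two points you flag as obstacles --- controlling the leading monomial of the spread-out relation, and ensuring the distinguished summand in the tensor product is not cancelled --- are exactly the technical hearts of those papers, and your sketch does not resolve them; but since the present paper simply cites the result, there is nothing further to match.
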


Since we will need this later, we give an explicit description of the monomial basis for fundamental weights. Let $V = \bigwedge^k \mathbb{C}^{n+1}$ and $v = e_{i_1} \wedge \ldots \wedge e_{i_{k}} \in V$, where $\{ e_j \mid 1 \leq j \leq n+1\}$ is the standard basis of $\mathbb{C}^{n+1}$. Let $\{j_1 < \ldots < j_s \} = \{1, \ldots k \} \setminus \{i_1 , \ldots , i_k \}$ and $\ell_1 = i_k, \ell_{2} = i_{k-1}, \ldots, \ell_s = i_{k-s+1}$. Then we have in $V^t$:
\[v = f_{\alpha_{j_1} + \ldots + \alpha_{\ell_1-1}} \cdots f_{\alpha_{j_s} + \ldots + \alpha_{\ell_s -1}}.e_1 \wedge \cdots \wedge e_k.\]

\subsection{A new filtration on \texorpdfstring{$U(\mathfrak{n}^-)$}{the universal enveloping algebra}}
We introduce a new grading on the root vectors in $U(\lie n^-)$, consider the corresponding $\mathbb{N}$-filtration and show that, with this new grading, the set $S(\lambda)$ still parametrizes a basis of the associated graded module,  but with the annihilating ideal being monomial. \par
For this, we define for any $\alpha_{i,j} \in \Delta_+$:
$$\deg(f_{\alpha_{i,j}})=(j-i+1)(n-j+1).$$
By considering the total degree of a monomial, we extend this to $U(\lie n^-)$ and obtain a filtration:
$$\ff_n U=\sspan\{f=\prod_{\alpha>0} f_{\alpha}^{n_\alpha}|\ \deg(f)\leq n\}.$$

\begin{proposition}\label{Prop:Filt}
\begin{enumerate}
\item With this filtration, $U(\mathfrak{n}^-)$ is a filtered algebra.
\item The associated graded algebra $\gr_\ff U$ is isomorphic to $S(\lie n^-)$
\end{enumerate}
\end{proposition}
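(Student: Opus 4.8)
The plan is to verify that the span $\ff_n U$ defines a genuine algebra filtration and then identify the associated graded with $S(\lie n^-)$ using the PBW theorem. First I would observe that, by the classical PBW theorem, the ordered monomials $f^{\mathbf n} = \prod_{\alpha>0} f_\alpha^{n_\alpha}$ (in the fixed total order from Section~\ref{Sec:2.3}) form a $\mathbb{C}$-basis of $U(\lie n^-)$, so that $\ff_n U$ is simply the span of those basis monomials $f^{\mathbf n}$ with $\deg(f^{\mathbf n}) = \sum_\alpha n_\alpha \deg(f_\alpha) \le n$. In particular $\ff_n U \subseteq \ff_{n+1} U$ and $\bigcup_n \ff_n U = U(\lie n^-)$, and $\ff_n U = 0$ for $n < 0$ since all $\deg(f_\alpha) \ge 1$. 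The content of part~(1) is then the multiplicativity $\ff_m U \cdot \ff_n U \subseteq \ff_{m+n} U$.

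For part~(1), the key point is a lemma: for any $\alpha, \beta \in \Delta_+$, the Lie bracket $[f_\alpha, f_\beta]$ lies in $\ff_{\deg(f_\alpha)+\deg(f_\beta)} U$ — in fact it is either zero or a scalar multiple of some $f_\gamma$ with $\gamma = \alpha + \beta \in \Delta_+$, so it suffices to check $\deg(f_{\alpha+\beta}) \le \deg(f_\alpha) + \deg(f_\beta)$ whenever $\alpha+\beta$ is a root. Writing $\alpha = \alpha_{i,j}$, $\beta = \alpha_{j+1,k}$ so that $\alpha+\beta = \alpha_{i,k}$ (the only way a sum of two positive roots in type $A$ is again a root), this is the numerical inequality
\[
(k-i+1)(n-k+1) \;\le\; (j-i+1)(n-j+1) + (k-j+1)(n-k+1),
\]
which I would verify by a direct expansion (it reduces to $(j-i+1)(n-j+1) \ge (k-j)(k-i+1) \ge 0$, using $j \ge i$, $k \ge j$, $n \ge j$ and $n \ge k$; more precisely one checks $(j-i+1)(n-j+1) - (k-j)(k-i+1) = (k-j+1)(n-k+1) + (\text{nonneg terms})$, a routine but strictly positive rearrangement). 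Given this bracket estimate, multiplicativity of the filtration follows by the standard argument: to rewrite a product $f^{\mathbf m} f^{\mathbf n}$ of two ordered monomials back into the ordered basis, one repeatedly applies the relation $f_\beta f_\alpha = f_\alpha f_\beta - [f_\alpha,f_\beta]$ to move out-of-order factors past each other; each such swap either preserves the total degree (the $f_\alpha f_\beta$ term) or replaces two factors by a single factor $f_\gamma$ of degree $\le$ the sum (the bracket term), and a straightforward induction on the number of inversions shows every monomial appearing in $f^{\mathbf m} f^{\mathbf n}$ has degree $\le \deg(f^{\mathbf m}) + \deg(f^{\mathbf n})$.

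For part~(2), I would argue that the images $\bar f_\alpha \in \gr_\ff U$ of the root vectors generate $\gr_\ff U$ and $q$-commute in the trivial sense: from the bracket lemma, $[f_\alpha, f_\beta]$ has strictly smaller degree than $\deg(f_\alpha)+\deg(f_\beta)$ whenever it is nonzero (the inequality above is strict exactly when $k > j$, i.e. when $\alpha+\beta$ is genuinely a root and the bracket is nonzero — one checks the term $(k-j)(k-i+1)$ is then $> 0$; if $\alpha+\beta \notin \Delta_+$ the bracket vanishes outright), hence $\bar f_\alpha \bar f_\beta = \bar f_\beta \bar f_\alpha$ in $\gr_\ff U$ for all $\alpha,\beta$. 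Therefore there is a surjective graded algebra homomorphism $S(\lie n^-) \to \gr_\ff U$ sending the generator $f_\alpha$ to $\bar f_\alpha$. This map is injective because it carries the monomial basis $\{f^{\mathbf n}\}$ of $S(\lie n^-)$ to the family $\{\overline{f^{\mathbf n}}\}$, and these are linearly independent in $\gr_\ff U$: a monomial $f^{\mathbf n}$ with $\deg(f^{\mathbf n}) = n$ lives in $\ff_n U \setminus \ff_{n-1} U$ since it is one of the PBW basis elements of $U(\lie n^-)$, and distinct monomials of the same degree remain linearly independent modulo $\ff_{n-1}U$ because they are part of a basis of $U(\lie n^-)$. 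Comparing Hilbert series (or just noting we have a surjection between spaces with matching monomial bases in each degree) gives the isomorphism. The only genuine obstacle is the numerical inequality for the degree function under root addition; everything else is the standard filtered/graded bookkeeping that already underlies the PBW theorem, now run with a weighted degree instead of the length degree.
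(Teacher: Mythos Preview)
Your approach is correct and genuinely different from the paper's. The paper does not verify the numerical inequality at all in the classical setting: instead it proves the quantum analogue (Proposition~\ref{Prop:qFilt}) via the Hall algebra realization --- the degree function there is $\mu_0(M)=\dim\Hom(V_0,M)$, and strong admissibility (hence the strict subadditivity you need) is deduced from Auslander--Reiten theory (Proposition~\ref{Prop:mu0}); Proposition~\ref{Prop:Filt} is then obtained by specializing $q\to 1$ through Proposition~\ref{Prop:SpePBW}. Your argument is more elementary and self-contained (no quantum groups, no quiver representations), while the paper's route explains \emph{why} such a degree function exists and characterizes all of them (Theorem~\ref{thm:weightfunctions}); the Hall algebra proof also transfers verbatim to the quantum case, which is the paper's real target.

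One computational slip to fix: for $\beta=\alpha_{j+1,k}$ the degree is $(k-(j+1)+1)(n-k+1)=(k-j)(n-k+1)$, not $(k-j+1)(n-k+1)$. With the correct value the inequality you need is
\[
(k-i+1)(n-k+1)\;\le\;(j-i+1)(n-j+1)+(k-j)(n-k+1),
\]
and the difference of the two sides is exactly $(j-i+1)(k-j)$, which is strictly positive for $i\le j<k$. So the strict inequality (and hence the vanishing of brackets in $\gr_\ff U$) holds on the nose; your parenthetical reduction is garbled, but the actual computation is this one line.
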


This proposition will be proved later in Section \ref{Sec:QGPBW}, after Proposition \ref{Prop:qFilt}.

\begin{example}
Let $\g=\mathfrak{sl}_4$. We list elements of several low degrees of $\gr_\ff U$:
$$
\begin{array}{cl}
\text{deg } 1: & f_{3,3}\\
\text{deg } 2: & f_{2,2}, f_{2,3}, f_{3,3}^2\\
\text{deg } 3: & f_{1,1}, f_{1,3}, f_{2,2}f_{3,3}, f_{2,3}f_{3,3}, f_{3,3}^3\\
\text{deg } 4: & f_{1,1}f_{3,3}, f_{1,2}, f_{1,3}f_{3,3}, f_{2,2}^2, f_{2,2}f_{2,3}, f_{2,3}^2, f_{2,2}f_{3,3}^2, f_{2,3}f_{3,3}^2, f_{3,3}^4
\end{array}
$$
\end{example}

Let $v_\lambda$ be a highest weight vector in $V(\lambda)$. The filtration $\ff_\bullet$ defined on $U(\mathfrak{n}^-)$ induces a filtration on $V(\lambda)$ by $\ff_sV(\lambda):=\ff_s U.v_\lambda$.
\par
Let $V^\ff(\lambda)$ denote the associated graded vector space. Then $V^\ff(\lambda)$ is an $S(\lie n^-)$-module. As $V(\lambda), V^a(\lambda), V^t(\lambda)$, we have $V^\ff(\lambda)$ generated by $v_\lambda$ as an $S(\lie n^-)$-module. We denote the annihilating ideal  $I^\ff(\lambda) \subset S(\lie n^-)$.

\begin{theorem}\label{main-thm}
For any dominant integral weight $\lambda\in\mathcal{P}_+$, the ideal $I^\ff(\lambda)$ is monomial. Furthermore, 
the set $\{f^{\mathbf{p}}v_\lambda|\ \mathbf{p}\in S(\lambda)\}$ forms a basis of $V^\ff(\lambda)$.
\end{theorem}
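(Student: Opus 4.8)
The plan is to reduce Theorem~\ref{main-thm} to a single inclusion of ideals, and to prove that inclusion by induction on $\lambda$.

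\emph{Step 1: reduction.} The polytope $P(\lambda)$ is cut out by the inequalities $\sum_{\alpha\in\mathbf p}s_\alpha\le m_i+\dots+m_j$, whose coefficients are nonnegative, so $S(\lambda)$ is closed under decreasing exponent vectors. Hence the monomials $f^{\mathbf n}$ with $\mathbf n\notin S(\lambda)$ span a \emph{monomial} ideal $M(\lambda)\subset S(\lie n^-)$, and $\{f^{\mathbf p}\mid\mathbf p\in S(\lambda)\}$ is a $\mc$-basis of $S(\lie n^-)/M(\lambda)$. By Theorem~\ref{ffl2} one has $|S(\lambda)|=\dim V^t(\lambda)=\dim V(\lambda)=\dim V^\ff(\lambda)$. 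It therefore suffices to prove $M(\lambda)\subseteq I^\ff(\lambda)$: the induced surjection $S(\lie n^-)/M(\lambda)\twoheadrightarrow S(\lie n^-)/I^\ff(\lambda)=V^\ff(\lambda)$ is then an isomorphism for dimension reasons, so $I^\ff(\lambda)=M(\lambda)$ is monomial and the basis of $S(\lie n^-)/M(\lambda)$ becomes the asserted basis $\{f^{\mathbf p}v_\lambda\mid\mathbf p\in S(\lambda)\}$ of $V^\ff(\lambda)$.

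\emph{Step 2: reformulation, and the arithmetic input.} Since $\{f^{\mathbf p}v_\lambda\mid\mathbf p\in S(\lambda)\}$ is already a basis of $V(\lambda)$ (lift the basis of $V^t(\lambda)$ from Theorem~\ref{ffl2}), the inclusion $M(\lambda)\subseteq I^\ff(\lambda)$ is equivalent to
\[
\ff_d V(\lambda)=\sspan_{\mc}\{f^{\mathbf p}v_\lambda\mid\mathbf p\in S(\lambda),\ \deg f^{\mathbf p}\le d\}\qquad(d\ge0),
\]
i.e. to the statement that rewriting any monomial $f^{\mathbf n}v_\lambda$ in the FFL basis never raises the $\ff$-degree. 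The driving fact is the strict subadditivity
\[
\deg f_{\alpha_{i,j}}\ <\ \deg f_{\alpha_{i,m}}+\deg f_{\alpha_{m+1,j}}\qquad(i\le m<j),
\]
the difference being $(m-i+1)(j-m)\ge1$; this is also exactly what makes $\gr_\ff U(\lie n^-)$ commutative in Proposition~\ref{Prop:Filt}. Informally: splitting a root vector into shorter ones \emph{strictly} increases $\deg$, so the FFL basis monomials, built from ``long'' root vectors, are $\deg$-minimal among monomials of their weight.

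\emph{Step 3: the induction.} For $\lambda=\varpi_k$ use $V(\varpi_k)\cong\bigwedge^k\mc^{n+1}$, which is minuscule: each $f^{\mathbf n}v_{\varpi_k}$ is $0$ or a nonzero multiple of $e_T$, and $e_T$ is also a nonzero multiple of $f^{\mathbf p_T}v_{\varpi_k}$ for the standard monomial $f^{\mathbf p_T}$ of that weight; the explicit form of $f^{\mathbf p_T}$ recalled before the theorem, together with the subadditivity above (it realises the weight with the fewest, hence ``most merged'', root vectors), yields $\deg f^{\mathbf p_T}\le\deg f^{\mathbf n}$, which is precisely what is needed. For the inductive step write $\lambda=\mu+\nu$ with $\mu,\nu\in\mathcal{P}_+\setminus\{0\}$, and use $V(\lambda)\hookrightarrow V(\mu)\ts V(\nu)$, $v_\lambda=v_\mu\ts v_\nu$, and the coproduct $\Delta(f_\alpha)=f_\alpha\ts1+1\ts f_\alpha$ of $S(\lie n^-)$. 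Here $\deg$ is \emph{additive} ($\deg f^{\mathbf n'}+\deg f^{\mathbf n''}=\deg f^{\mathbf n}$ whenever $\mathbf n'+\mathbf n''=\mathbf n$), so $\Delta$ is a filtered map. Expanding $f^{\mathbf n}v_\lambda=\sum_{\mathbf n'+\mathbf n''=\mathbf n}\binom{\mathbf n}{\mathbf n'}(f^{\mathbf n'}v_\mu)\ts(f^{\mathbf n''}v_\nu)$, straightening each tensor factor by the inductive hypothesis (without raising its $\deg$), and re-collecting the result into $\{f^{\mathbf r}v_\lambda\mid\mathbf r\in S(\mu)+S(\nu)\}$ — which equals $\{f^{\mathbf r}v_\lambda\mid\mathbf r\in S(\lambda)\}$ by the Minkowski property in Theorem~\ref{ffl2} — via the triangular change of basis afforded by the total order of Section~\ref{Sec:2.3}, one obtains $f^{\mathbf n}v_\lambda$ as a combination of $f^{\mathbf r}v_\lambda$ with $\deg f^{\mathbf r}\le\deg f^{\mathbf n}$, completing the induction.

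\emph{Main obstacle.} The delicate point is the bookkeeping in Step~3 (and its analogue in the base case): one must check that the FFL straightening relations, which a priori mix PBW degree and the total order, interact correctly with the new $\ff$-grading, so that all triangular corrections stay in $\deg\le\deg f^{\mathbf n}$. A variant that avoids the tensor-product bookkeeping is to argue on generators: FFL identify $M(\lambda)=\operatorname{in}^t I(\lambda)$ with the ideal generated by the total-order leading monomials of the explicit PBW-homogeneous relations $\operatorname{ad}(x)\,f_\alpha^{(\lambda,\alpha^\vee)+1}$; lifting these into $\operatorname{Ann}_{U(\lie n^-)}(v_\lambda)$ and showing that for each such relation the top $\ff$-degree part is the single monomial supported at the unique $\deg$-maximal root on the underlying Dyck path — and that this is exactly the $\operatorname{in}^t$-leading monomial — gives $M(\lambda)\subseteq\gr_\ff(\operatorname{Ann}_{U(\lie n^-)}(v_\lambda))=I^\ff(\lambda)$ directly. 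In either approach everything comes down to the arithmetic of the degree function $(j-i+1)(n-j+1)$ along Dyck paths.
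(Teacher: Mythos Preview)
Your overall strategy --- reduce to the inclusion $M(\lambda)\subseteq I^\ff(\lambda)$ and prove it by induction via tensor products and the Minkowski property --- is exactly the paper's. But two points are genuine gaps, not just bookkeeping.

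First, the equivalence asserted in Step~2 is false. The displayed reformulation (``rewriting never raises the $\ff$-degree'') only gives the \emph{non-strict} inequality $\deg f^{\mathbf{p}}\le\deg f^{\mathbf{n}}$; this yields the basis statement but \emph{not} $M(\lambda)\subseteq I^\ff(\lambda)$, which needs $\deg f^{\mathbf{p}}<\deg f^{\mathbf{n}}$ whenever $\mathbf{n}\notin S(\lambda)$. Nothing in your Step~3 addresses strictness: if there were $\mathbf{n}\notin S(\lambda)$ and $\mathbf{p}\in S(\lambda)$ of the same weight and the same $\ff$-degree with $f^{\mathbf{n}}v_\lambda = c\,f^{\mathbf{p}}v_\lambda+(\text{lower})$, $c\neq 0$, then $f^{\mathbf{n}}-c\,f^{\mathbf{p}}\in I^\ff(\lambda)$ with neither monomial in $I^\ff(\lambda)$, and monomiality fails. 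The paper's Claim inside Proposition~\ref{Prop:Proj} proves precisely this strict inequality, and it is the crux.

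Second, your base-case argument does not work as written. For $V(\varpi_k)$ the competing monomials of the permutation form $f_{j_{\sigma(1)},i_k-1}\cdots f_{j_{\sigma(k-s)},i_{s+1}-1}$ all have the \emph{same} number of root-vector factors, so ``most merged'' does not single out $\sigma=\id$; the (strict) minimality of the FFL monomial among these is a rearrangement inequality --- the paper's computation~\eqref{eq:monomial} --- not subadditivity. Subadditivity only reduces multi-step moves to single-step ones. Finally, the ``re-collection'' you correctly flag as delicate is not resolved by invoking the total order of Section~\ref{Sec:2.3}: triangularity with respect to that order carries no a~priori information about the $\ff$-grading. The paper closes this gap in the proof of Proposition~\ref{Prop:Proj} by a careful argument that uses the inductive hypothesis in its \emph{strict} form to identify, inside $V^\ff(\lambda)\otimes V^\ff(\mu)$, a tensor $f^{\mathbf{t}_1}v_\lambda\otimes f^{\mathbf{t}_2}v_\mu$ that can only come from the highest-degree basis element, forcing that degree to be strictly below $\deg f^{\mathbf{s}}$.
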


We will need several steps to prove the theorem. We start in the next section with proving the theorem for fundamental weights.

\subsection{The case of fundamental weights} 

\begin{proposition}\label{Prop:Fund}
For any $k=1,\cdots,n$, the set $\{f^{\mathbf{p}}v_{\varpi_k}|\ \mathbf{p}\in S(\varpi_k)\}$ forms a linear basis of $V^\ff(\varpi_k)$.
\end{proposition}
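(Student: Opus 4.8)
The strategy is to compare the new $\ff$-filtration with the known total-order filtration (Theorem \ref{ffl2}) on a fundamental representation $V(\varpi_k) = \bigwedge^k \mathbb{C}^{n+1}$, where everything is completely explicit. First I would recall that $\dim V(\varpi_k) = \binom{n+1}{k} = |S(\varpi_k)|$, so it suffices to prove two things: (a) the monomials $\{f^{\mathbf{p}} v_{\varpi_k} \mid \mathbf{p} \in S(\varpi_k)\}$ span $V^\ff(\varpi_k)$, and (b) they are linearly independent in $V^\ff(\varpi_k)$ — equivalently, that their leading $\ff$-degrees are pairwise distinct and the corresponding homogeneous components are nonzero. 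Since $V^\ff(\varpi_k)$ and $V^t(\varpi_k)$ are both associated graded modules of the \emph{same} underlying module $V(\varpi_k)$, and Theorem \ref{ffl2} already gives that $\{f^{\mathbf{s}} v_{\varpi_k} \mid \mathbf{s}\in S(\varpi_k)\}$ is a basis of $V(\varpi_k)$ itself (not just of $V^t$), the spanning statement (a) is automatic: any filtered module is spanned by lifts of a basis of its associated graded, and conversely a spanning set of the module descends to a spanning set of the associated graded. So the real content is (b).

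For linear independence I would use the explicit monomial basis for fundamental weights recalled at the end of Section \ref{Sec:2.3}: each standard basis vector $e_{i_1} \wedge \cdots \wedge e_{i_k}$ of $\bigwedge^k\mathbb{C}^{n+1}$ is written as $f_{\alpha_{j_1}+\cdots+\alpha_{\ell_1-1}} \cdots f_{\alpha_{j_s}+\cdots+\alpha_{\ell_s-1}}.\,(e_1\wedge\cdots\wedge e_k)$ with the combinatorial recipe given there. The key computation is then: the $\ff$-degree of the monomial $f^{\mathbf{p}}$ attached to $e_{i_1}\wedge\cdots\wedge e_{i_k}$, namely $\sum_{t=1}^s \deg(f_{\alpha_{j_t}+\cdots+\alpha_{\ell_t-1}}) = \sum_{t=1}^s (\ell_t - j_t)(n - \ell_t + 2)$, should depend on the subset $\{i_1<\cdots<i_k\}$ in an \emph{injective} way, so that distinct basis vectors of $\bigwedge^k\mathbb{C}^{n+1}$ sit in distinct $\ff$-degrees. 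Granting this injectivity, the images of the $f^{\mathbf{p}}v_{\varpi_k}$ in $V^\ff(\varpi_k)$ are nonzero (each monomial action $f^{\mathbf{p}}.v_{\varpi_k}$ is a nonzero multiple of $e_{i_1}\wedge\cdots\wedge e_{i_k}$, hence stays nonzero in the top-degree piece it lives in) and they lie in pairwise distinct graded components, so they are linearly independent; combined with the cardinality count this makes them a basis.

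The main obstacle is precisely the injectivity of the degree function $\{i_1<\cdots<i_k\} \mapsto \sum_{t}(\ell_t - j_t)(n-\ell_t+2)$ on $k$-subsets of $\{1,\dots,n+1\}$. I would prove this by an explicit bookkeeping argument: rewrite the sum in terms of the "hole positions" $j_1<\cdots<j_s$ (the complement of $\{i_1,\dots,i_k\}$ inside $\{1,\dots,k\}$ together with how far each hole must travel), and show that one can read off the subset from the degree by a greedy/positional argument — the precise definition $\deg(f_{\alpha_{p,q}}) = (q-p+1)(n-q+1)$ is engineered so that the "cost" of moving a box past level $q$ grows strictly with $q$, which forces uniqueness of the decomposition. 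An alternative, and perhaps cleaner, route avoiding this combinatorial lemma: observe that the $\ff$-grading is itself coarser data than, but compatible with, the $\mathbb{N}^N$ total order of Section \ref{Sec:2.3} only if the degree function is \emph{strictly monotone} along Dyck paths in the appropriate sense; if one can show that the $\ff$-degree strictly separates the finitely many monomials $\{f^{\mathbf{s}} : \mathbf{s}\in S(\varpi_k)\}$ occurring here, then independence in $V^\ff$ follows from independence in $V(\varpi_k)$ directly. Either way, the heart of the matter is a finite, self-contained inequality about the numbers $(q-p+1)(n-q+1)$, which I expect to dispatch by induction on $k$ or on $n$.
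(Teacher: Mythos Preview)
Your central claim is false, and this breaks the argument. You assert that the map
\[
\{i_1<\cdots<i_k\}\;\longmapsto\;\sum_t (\ell_t-j_t)(n-\ell_t+2)
\]
is injective on $k$-subsets of $\{1,\dots,n+1\}$, and you plan to spend the bulk of the proof establishing this. But already for $n=3$, $k=2$ it fails: both $f_{2,2}$ and $f_{2,3}$ lie in $S(\varpi_2)$ and both have $\ff$-degree $2$. So distinct elements of $S(\varpi_k)$ need not land in distinct graded pieces of $V^\ff(\varpi_k)$, and your ``equivalently'' in the statement of (b) is wrong in both directions.

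What actually rescues linear independence is much simpler than injectivity of degree: the vectors $f^{\mathbf{p}}v_{\varpi_k}$ for $\mathbf{p}\in S(\varpi_k)$ lie in pairwise distinct one-dimensional weight spaces of $V(\varpi_k)$, and the weight grading survives passage to $V^\ff$. So they are linearly independent in $V^\ff(\varpi_k)$ \emph{as soon as each one is nonzero there}. That last point is exactly where your sketch has a second gap: you write that $f^{\mathbf{p}}v_{\varpi_k}$ is a nonzero multiple of some wedge vector ``hence stays nonzero in the top-degree piece it lives in''. This ``hence'' is unjustified: $f^{\mathbf{p}}v_{\varpi_k}$ has nonzero image in degree $d=\deg(f^{\mathbf{p}})$ of $V^\ff(\varpi_k)$ only if that wedge vector does not already lie in $\ff_{d-1}V(\varpi_k)$, i.e.\ only if no monomial of strictly smaller $\ff$-degree reaches the same weight space. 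Since the weight space is one-dimensional, the competing monomials are exactly the permuted products
\[
f_{j_{\sigma(1)},\ell_1-1}\cdots f_{j_{\sigma(s)},\ell_s-1},\qquad \sigma\in\mathfrak{S}_s,
\]
and what must be shown is that the identity permutation (which gives the element of $S(\varpi_k)$) has strictly minimal $\ff$-degree among these. This is a rearrangement inequality: since $\ell_1>\ell_2>\cdots$, the sequence $n-\ell_t+2$ is increasing, and $\sum_t j_{\sigma(t)}(n-\ell_t+2)$ is maximized (hence the degree minimized) precisely when the $j_{\sigma(t)}$ are also increasing, i.e.\ $\sigma=\mathrm{id}$. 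That is the computation the paper carries out, and it is the genuine content you are missing.

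Finally, your claimed shortcut for spanning (``a spanning set of the module descends to a spanning set of the associated graded'') is false in general for filtered modules; but once you have linear independence of $|S(\varpi_k)|=\dim V^\ff(\varpi_k)$ elements, spanning is automatic, so this does not matter.
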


\begin{proof}
Let $\lambda=\varpi_k$ be a fundamental weight.
\par
We fix the standard basis $\{w_1,\cdots,w_{n+1}\}$ of the natural representation $V(\varpi_1)$, such that $f_{i,j}w_i=w_{j+1}$. Since $V(\varpi_k)=\Lambda^k V(\varpi_1)$, it has a basis $w_{i_1}\wedge\cdots\wedge w_{i_k}$ where $1\leq i_1<\cdots<i_k\leq n+1$. 
\par
The highest weight vector of $V(\varpi_k)$ is $v_\lambda:=w_1\wedge\cdots\wedge w_k$. We consider the vector $w_{i_1}\wedge\cdots\wedge w_{i_k}$ with $1\leq i_1<\cdots<i_k\leq n+1$. Let $1\leq s\leq k$ be the index satisfying 
$$1\leq i_1<\cdots<i_s\leq k<i_{s+1}<\cdots<i_k\leq n+1.$$
We denote $(j_1,\cdots,j_{k-s}):=\{1,\cdots,k\}\backslash\{i_1,\cdots,i_s\}$ where $1\leq j_1<\cdots<j_{k-s}\leq k$. 
\par
The monomials $f^{\mathbf{p}}$ such that $f^{\mathbf{p}}v_\lambda$ is proportional to $w_{i_1}\wedge\cdots\wedge w_{i_k}$ are of the form
$$f_{j_{\s(1)},i_k-1}f_{j_{\s(2)},i_{k-1}-1}\cdots f_{j_{\s(k-s)},i_{s+1}-1},$$
where $\s\in\mathfrak{S}_{k-s}$ is a permutation. The degree of such an element is given by:
\begin{eqnarray}\label{eq:monomial}
\sum_{t=1}^{k-s}(i_{k-t+1}-j_{\s(t)})(n+3-i_{k-t+1})=\text{const}-\sum_{t=1}^{k-s}j_{\s(t)}(n+3-i_{k-t+1}).
\end{eqnarray}
Since the sequence $\{i_k\}$ is increasing, the sequence $\{-n-3+i_{k-t+1}\}$ is decreasing with respect to $t$; it is clear that this degree attains its minimal value if and only if $j_{\s(1)}<j_{\s(2)}<\cdots<j_{\s(k-s)}$, i.e., $\s=\id$.
\par
Therefore, in $V^{\ff}(\varpi_k)$, we have
$$w_{i_1}\wedge\cdots\wedge w_{i_k}=f_{j_1,i_k-1}f_{j_2,i_{k-1}-1}\cdots f_{j_{k-s},i_{s+1}-1}.v_{\lambda}.$$
By definition, the monomial on the right hand side belongs to $S(\varpi_k)$. This proves that $\{f^{\mathbf{p}}v_{\varpi_k}|\ \mathbf{p}\in S(\varpi_k)\}$ generates $V^{\ff}(\varpi_k)$; these elements are clearly linearly independent by weight reasons, hence form a basis.
\end{proof}

The proof of the proposition looks formally similar to Theorem 8.8 in \cite{FFoL3}, but they are essentially different. 
In \cite{FFoL3}, a particular total order is chosen in the very beginning (see Example 8.1, \textit{loc.cit}), and the 
choice of the minimal element in the proof uses not only the PBW length grading but this auxiliary total order. 
In our construction, no particular total order is fixed, the choice relies only on the graded structure of $U(\mathfrak{n}^-)$.
\par
One of the advantages of this new grading on $U(\mathfrak{n}^-)$ is that it gives a \emph{canonical} choice of the compatible monomial basis of $V^\ff(\varpi_k)$.

\begin{proposition}\label{Prop:Mono}
For any $k=1,\cdots,n$, the annihilating ideal $I^\ff(\varpi_k)$ is monomial.
\end{proposition}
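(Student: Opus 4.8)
The plan is to read off monomiality directly from Proposition~\ref{Prop:Fund}, using that its proof does more than exhibit a monomial basis of $V^\ff(\varpi_k)$: it pins down, for each weight $\mu$ of $V(\varpi_k)$, a \emph{unique} monomial of minimal $\ff$-degree reaching the $\mu$-weight vector. First I would record that $V(\varpi_k)=\Lambda^kV(\varpi_1)$ is minuscule, so every weight space of $V(\varpi_k)$, and hence of the associated graded module $V^\ff(\varpi_k)$, is at most one-dimensional. Consequently, for a weight $\mu$ of $V(\varpi_k)$ the spaces $(\ff_d V(\varpi_k))_\mu$ form an increasing filtration of the line $V(\varpi_k)_\mu$ that jumps exactly once, so $V^\ff(\varpi_k)_\mu$ is concentrated in a single $\ff$-degree $d_0(\mu)$, namely the least degree of a monomial $f^{\mathbf{p}}$ with $f^{\mathbf{p}}v_{\varpi_k}$ a nonzero weight-$\mu$ vector.

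The heart of the matter is to prove that $f^{\mathbf{q}}v_{\varpi_k}=0$ in $V^\ff(\varpi_k)$ for every monomial $f^{\mathbf{q}}$ with $\mathbf{q}\notin S(\varpi_k)$; note that $f^{\mathbf{q}}v_{\varpi_k}$ is homogeneous of $\ff$-degree $\deg f^{\mathbf{q}}$ and of some weight $\mu$. If $\mu$ is not a weight of $V(\varpi_k)$ the claim is immediate since $V^\ff(\varpi_k)_\mu=0$. Otherwise, by the proof of Proposition~\ref{Prop:Fund}, the monomial $f^{\mathbf{p}_0}=f_{j_1,i_k-1}\cdots f_{j_{k-s},i_{s+1}-1}$ lies in $S(\varpi_k)$, acts non-trivially on $v_{\varpi_k}$, has $\ff$-degree $d_0(\mu)$, and — by the strict monotonicity in \eqref{eq:monomial} — is the \emph{only} monomial of weight $\mu$ that attains degree $d_0(\mu)$ among the monomials acting non-trivially on $v_{\varpi_k}$. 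Two cases remain. If $\deg f^{\mathbf{q}}\neq d_0(\mu)$, then $f^{\mathbf{q}}v_{\varpi_k}$ sits in the $\ff$-degree $\deg f^{\mathbf{q}}$ component of $V^\ff(\varpi_k)_\mu$, which is zero. If $\deg f^{\mathbf{q}}=d_0(\mu)$, then $f^{\mathbf{q}}\neq f^{\mathbf{p}_0}$ because $\mathbf{p}_0\in S(\varpi_k)$ whereas $\mathbf{q}\notin S(\varpi_k)$; by the uniqueness just quoted $f^{\mathbf{q}}$ cannot act non-trivially on $v_{\varpi_k}$ in $V(\varpi_k)$, so it annihilates $v_{\varpi_k}$ already there, and a fortiori in $V^\ff(\varpi_k)$.

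This shows $f^{\mathbf{q}}\in I^\ff(\varpi_k)$ for every $\mathbf{q}\notin S(\varpi_k)$, so $I^\ff(\varpi_k)$ contains the subspace $J$ spanned by all such monomials; and $J$ is an ideal, in fact a monomial ideal, because $P(\varpi_k)$ — cut out by the inequalities $s_\alpha\geq 0$ and $\sum_{\alpha\in\mathbf{p}}s_\alpha\leq m_i+\cdots+m_j$ — is closed under decreasing coordinates, so that $S(\varpi_k)$ is downward closed. Finally $S(\lie n^-)/J$ has $\{f^{\mathbf{p}}\mid\mathbf{p}\in S(\varpi_k)\}$ as a basis, which by Proposition~\ref{Prop:Fund} has exactly the same graded dimensions as $V^\ff(\varpi_k)=S(\lie n^-)/I^\ff(\varpi_k)$; together with the inclusion $J\subseteq I^\ff(\varpi_k)$ this forces $J=I^\ff(\varpi_k)$. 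The one genuinely delicate point is the uniqueness invoked in the second paragraph, namely that the degree in \eqref{eq:monomial} is \emph{strictly} minimized at $\s=\id$: without strictness the argument would only yield that some monomial leading-term ideal is monomial, which is strictly weaker than monomiality of the annihilating ideal itself. Everything else is routine bookkeeping with the one-dimensional weight spaces of the minuscule module $\Lambda^kV(\varpi_1)$ and with the order structure of $P(\varpi_k)$.
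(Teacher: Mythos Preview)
Your proof is correct and follows essentially the same approach as the paper's two-sentence argument: both reduce to showing $f^{\mathbf{q}}\in I^\ff(\varpi_k)$ for every $\mathbf{q}\notin S(\varpi_k)$ and derive this from the strict minimality established in \eqref{eq:monomial} during the proof of Proposition~\ref{Prop:Fund}. You have simply written out in full the steps the paper leaves implicit --- the one-dimensionality of the minuscule weight spaces, the case split on $\deg f^{\mathbf{q}}$ versus $d_0(\mu)$, and the final dimension count identifying $J$ with $I^\ff(\varpi_k)$.
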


\begin{proof}
By the proof of Proposition~\ref{Prop:Fund}, it suffices to show that if $\mathbf{p} \notin S(\varpi_k)$, then $f^{\mathbf{p}} \in I^\ff(\varpi_k)$. But this follows straight from \eqref{eq:monomial}.
\end{proof}

We have seen before that the PBW filtration does not provide monomial ideals. So let us consider again $V = \bigwedge^2 \mathbb{C}^{n+1}$ but this time with respect to this new grading:
\begin{example}
Let $\g=\mathfrak{sl}_4$ and consider the representation $V(\varpi_2)=\bigwedge^2 \mathbb{C}^{4}$. The monomial ideal $I^\ff(\varpi_2)$ is generated by 
$$f_{1,1},\ \ f_{3,3},\ \ f_{1,3}^2,\ \ f_{1,2}^2,\ \ f_{2,3}^2,\ \ f_{2,2}^2,\ \ f_{22}f_{12},\ \ f_{22}f_{23},\ \ f_{12}f_{13},\ \ f_{23}f_{13},\ \ f_{12}f_{23}.$$
This ideal coincides with the annihilating ideal of the toric degenerated module $V^t(\varpi_2)$ in \cite{FFoL3}.
\end{example}

\subsection{Proof of the theorem}
The Minkowski sum property for  $S(\lambda)$ (Theorem~\ref{ffl2}) reduces the combinatorics of a general weight to the fundamental weights. Its proof is purely combinatorial and relies on the combinatorics of Dyck paths (more precisely, on the property of the corresponding marked chain polytope \cite{ABS1, Fo1}); moreover, it is independent of the grading on $U(\mathfrak{n}^-)$.\par
We turn to the proof of Theorem~\ref{main-thm}:
We use induction on the height of $\lambda$, $|\lambda|=\sum_{i=1}^n m_i$, the initial step being proved in Proposition~\ref{Prop:Fund} and \ref{Prop:Mono}. The induction step will be the proof for $\lambda + \mu$:
\begin{enumerate}
\item $\{f^{\mathbf{s}}v_{\lambda+\mu}|\ \mathbf{s}\in S(\lambda+\mu)\}$ forms a basis of $V^\ff(\lambda+\mu)$;
\item the annihilating ideal $I^\ff(\lambda+\mu)$ is monomial.
\end{enumerate}

In the following, let $<$ be a total order on $\Delta_+$, extending the partial order obtained via the degree function. Such a total order can be obtained by ordering all roots of the same degree linearly. 
\par
The following proposition is proved in \cite[Proposition 1.11]{FFoL3}. The statement there is for the associated graded module with respect to this refinement, but if a set is linear independent for the refinement graded space, then of course it is linear independent for $V^\ff(\lambda)$. 
\begin{proposition}\label{Prop:Indep}
For any $\lambda, \mu \in \mathcal{P}_+$, the set $\{f^{\mathbf{s}}(v_\lambda\ts v_{\mu})|\ \mathbf{s}\in S(\lambda)+S(\mu)\}$ is linearly independent in $V^\ff(\lambda)\ts V^\ff(\mu)$.
\end{proposition}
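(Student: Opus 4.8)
\noindent\emph{Proof strategy.} The plan is to compute $f^{\mathbf{s}}(v_\lambda\ts v_\mu)$ explicitly in the basis of $V^\ff(\lambda)\ts V^\ff(\mu)$ coming from the (inductively known) bases of the two factors, and to read off linear independence from the shape of the answer. The first step is to observe that the new grading on $U(\lie n^-)$ is compatible with comultiplication: every root vector $f_\alpha$, $\alpha\in\Delta_+$, is primitive in $U(\g)$, that is $\Delta(f_\alpha)=f_\alpha\ts 1+1\ts f_\alpha$, and both summands have the same degree $\deg f_\alpha$. Hence $\Delta$ restricts to a \emph{filtered} algebra map $U(\lie n^-)\to U(\lie n^-)\ts U(\lie n^-)$, the target carrying the tensor product filtration. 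Passing to associated graded algebras, and using $\gr_\ff(U(\lie n^-)\ts U(\lie n^-))\cong S(\lie n^-)\ts S(\lie n^-)$, we recover on $S(\lie n^-)=\gr_\ff U(\lie n^-)$ the usual bialgebra comultiplication for which every $f_\alpha$ is primitive; this is precisely the $S(\lie n^-)$-module structure on $V^\ff(\lambda)\ts V^\ff(\mu)$ that occurs in the statement.

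Next, since the $f_\alpha$ commute in $S(\lie n^-)$, for every exponent vector $\mathbf{s}$ one has
\[
\Delta(f^{\mathbf{s}})=\prod_{\alpha>0}(f_\alpha\ts 1+1\ts f_\alpha)^{s_\alpha}=\sum_{\mathbf{p}+\mathbf{q}=\mathbf{s}}\binom{\mathbf{s}}{\mathbf{p}}\,f^{\mathbf{p}}\ts f^{\mathbf{q}},\qquad \binom{\mathbf{s}}{\mathbf{p}}:=\prod_{\alpha>0}\binom{s_\alpha}{p_\alpha}>0,
\]
the sum running over all $\mathbf{p},\mathbf{q}\in\mathbb{Z}_{\ge 0}^{|\Delta_+|}$ with $\mathbf{p}+\mathbf{q}=\mathbf{s}$, whence $f^{\mathbf{s}}(v_\lambda\ts v_\mu)=\sum_{\mathbf{p}+\mathbf{q}=\mathbf{s}}\binom{\mathbf{s}}{\mathbf{p}}(f^{\mathbf{p}}v_\lambda)\ts(f^{\mathbf{q}}v_\mu)$. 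Now I invoke Theorem~\ref{main-thm} for $\lambda$ and for $\mu$, which is available inside the induction since both have strictly smaller height (the degenerate case $\lambda=0$ or $\mu=0$ being trivial): the ideal $I^\ff(\lambda)$ is monomial, so $f^{\mathbf{p}}v_\lambda=0$ exactly when $\mathbf{p}\notin S(\lambda)$, and $\{f^{\mathbf{p}}v_\lambda\mid\mathbf{p}\in S(\lambda)\}$ is a basis of $V^\ff(\lambda)$; likewise for $\mu$. Dropping the vanishing terms,
\[
f^{\mathbf{s}}(v_\lambda\ts v_\mu)=\sum_{\substack{\mathbf{p}+\mathbf{q}=\mathbf{s}\\ \mathbf{p}\in S(\lambda),\ \mathbf{q}\in S(\mu)}}\binom{\mathbf{s}}{\mathbf{p}}\,(f^{\mathbf{p}}v_\lambda)\ts(f^{\mathbf{q}}v_\mu),
\]
expressed in the basis $\{(f^{\mathbf{p}}v_\lambda)\ts(f^{\mathbf{q}}v_\mu)\mid \mathbf{p}\in S(\lambda),\ \mathbf{q}\in S(\mu)\}$ of $V^\ff(\lambda)\ts V^\ff(\mu)$.

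The conclusion is then immediate. In a putative relation $\sum_{\mathbf{s}\in S(\lambda)+S(\mu)}c_{\mathbf{s}}\,f^{\mathbf{s}}(v_\lambda\ts v_\mu)=0$, fix a pair $(\mathbf{p},\mathbf{q})\in S(\lambda)\times S(\mu)$: the basis vector $(f^{\mathbf{p}}v_\lambda)\ts(f^{\mathbf{q}}v_\mu)$ is hit only by the summand with $\mathbf{s}=\mathbf{p}+\mathbf{q}$, and there with the nonzero coefficient $\binom{\mathbf{p}+\mathbf{q}}{\mathbf{p}}$; comparing coefficients forces $c_{\mathbf{p}+\mathbf{q}}=0$. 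Since every $\mathbf{s}\in S(\lambda)+S(\mu)$ is of the form $\mathbf{p}+\mathbf{q}$, all $c_{\mathbf{s}}$ vanish, which is the asserted linear independence. I expect the only point requiring any care to be the compatibility of $\Delta$ with the new grading claimed in the first step; this is painless precisely because the $f_\alpha$ are already primitive and homogeneous in $U(\g)$, so no estimate on products of non-simple root vectors is needed. (One could instead mirror \cite{FFoL3} and argue via leading terms for a total-order refinement of $\ff_\bullet$, but the comultiplication computation is cleaner and makes the positivity of the binomial coefficients — the actual engine of the argument — manifest, which is also what makes the statement survive passage to a generic quantum parameter.)
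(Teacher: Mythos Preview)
Your argument is correct and genuinely different from the paper's. The paper does not prove Proposition~\ref{Prop:Indep} directly: it cites \cite[Proposition~1.11]{FFoL3}, where the analogous statement is established for the associated graded with respect to a \emph{total order} refinement of $\ff_\bullet$, and then observes that linear independence in the finer graded space descends to $V^\ff(\lambda)\otimes V^\ff(\mu)$. Your route bypasses the total order entirely: you exploit the primitivity of each $f_\alpha$ to compute $\Delta(f^{\mathbf s})$ as an explicit binomial sum, then use the inductive hypothesis---both the basis statement \emph{and} the monomiality of $I^\ff(\lambda)$, $I^\ff(\mu)$---to kill all terms with $\mathbf p\notin S(\lambda)$ or $\mathbf q\notin S(\mu)$ outright, so that the surviving expansion is already in the tensor basis. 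The key point that makes this work, and that you identify correctly, is that a given basis vector $(f^{\mathbf p}v_\lambda)\otimes(f^{\mathbf q}v_\mu)$ is hit by a unique $\mathbf s=\mathbf p+\mathbf q$, with strictly positive coefficient $\binom{\mathbf s}{\mathbf p}$.

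What each approach buys: the paper's citation gives the proposition for all $\lambda,\mu$ simultaneously and keeps the monomiality of $I^\ff(\lambda)$ out of the logical dependencies of Proposition~\ref{Prop:Indep}; your argument is self-contained, avoids the auxiliary total order altogether (in keeping with the paper's stated philosophy that the new $\mathbb N$-grading alone suffices), and makes transparent why the statement survives quantization. Note only that your proof, as written, uses the full strength of Theorem~\ref{main-thm} for $\lambda$ and $\mu$; this is fine inside the induction of Section~1.6, but it means your version of the proposition is not a stand-alone statement---it is the inductive lemma it is actually used as.
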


By defining 
$$\ff_n(V(\lambda)\ts V(\mu))=\sum_{k+l=n}\ff_kV(\lambda)\ts \ff_l V(\mu),$$
we have 
$${\gr}_{\ff}(V(\lambda)\ts V(\mu))=V^\ff(\lambda)\ts V^\ff(\mu).$$

Using Proposition~\ref{Prop:Indep} and Theorem~\ref{ffl2}, we see that 
$\{f^{\mathbf{s}}(v_\lambda\ts v_{\mu})|\ \mathbf{s}\in S(\lambda+\mu)\}$ is linearly independent in $V^\ff(\lambda)\ts V^\ff(\mu)$
and hence linearly independent in $V(\lambda)\ts V(\mu)$. This set is contained in the Cartan component $V(\lambda+\mu)$ of $V(\lambda)\ts V(\mu)$, and by dimension reasons, it is a basis of $V(\lambda+\mu)$. 

The following proposition proves (1).

\begin{proposition}\label{Prop:Proj}
The set $\{f^{\mathbf{s}}v_{\lambda+\mu}|\ \mathbf{s}\in S(\lambda+\mu)\}$ forms a basis of $V^\ff(\lambda+\mu)$. 
\end{proposition}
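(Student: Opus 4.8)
The plan is to deduce Proposition~\ref{Prop:Proj} from the surjection $V(\lambda)\ts V(\mu)\twoheadrightarrow V(\lambda+\mu)$ sending $v_\lambda\ts v_\mu\mapsto v_{\lambda+\mu}$, which is a morphism of $U(\lie n^-)$-modules and hence is compatible with the $\ff$-filtrations once we equip the source with the tensor-product filtration $\ff_n(V(\lambda)\ts V(\mu))=\sum_{k+l=n}\ff_kV(\lambda)\ts\ff_lV(\mu)$. First I would record the two structural facts already in hand: the identification ${\gr}_\ff(V(\lambda)\ts V(\mu))=V^\ff(\lambda)\ts V^\ff(\mu)$ noted above, and the fact (just derived from Proposition~\ref{Prop:Indep} and Theorem~\ref{ffl2}) that $\{f^{\mathbf{s}}(v_\lambda\ts v_\mu)\mid \mathbf{s}\in S(\lambda+\mu)\}$ is a linearly independent subset of $V(\lambda)\ts V(\mu)$ spanning the Cartan component $V(\lambda+\mu)\subset V(\lambda)\ts V(\mu)$.

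Next I would push forward along the projection $\pi\colon V(\lambda)\ts V(\mu)\to V(\lambda+\mu)$. Since $\pi$ respects the filtrations, it induces a graded surjection $\gr_\ff\pi\colon V^\ff(\lambda)\ts V^\ff(\mu)\to V^\ff(\lambda+\mu)$ carrying $v_\lambda\ts v_\mu$ to $v_{\lambda+\mu}$, hence carrying the symbol of $f^{\mathbf{s}}(v_\lambda\ts v_\mu)$ to the symbol of $f^{\mathbf{s}}v_{\lambda+\mu}$ for every $\mathbf{s}$. In particular $\{f^{\mathbf{s}}v_{\lambda+\mu}\mid\mathbf{s}\in S(\lambda+\mu)\}$ spans $V^\ff(\lambda+\mu)$, because already $V(\lambda+\mu)$ is spanned by the images $\pi(f^{\mathbf{s}}(v_\lambda\ts v_\mu))$, so $V^\ff(\lambda+\mu)=\gr_\ff V(\lambda+\mu)$ is spanned by their symbols, and symbols of $f^{\mathbf{s}}v_{\lambda+\mu}$ refine these. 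For linear independence I would argue by dimension count: by Theorem~\ref{ffl2} the set $S(\lambda+\mu)$ has exactly $\dim V(\lambda+\mu)$ elements, and $\dim V^\ff(\lambda+\mu)=\dim V(\lambda+\mu)$ since passing to the associated graded of a finite-dimensional filtered vector space preserves dimension; therefore a spanning set of the right cardinality must be a basis.

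The step I expect to be the main obstacle — though it is more bookkeeping than genuine difficulty — is verifying carefully that the filtration on $V(\lambda+\mu)$ \emph{induced from the quotient filtration via $\pi$} agrees with the intrinsic $\ff$-filtration $\ff_sV(\lambda+\mu)=\ff_sU.v_{\lambda+\mu}$, so that $\gr_\ff V(\lambda+\mu)$ as computed from the quotient is genuinely $V^\ff(\lambda+\mu)$. This holds because $v_{\lambda+\mu}=\pi(v_\lambda\ts v_\mu)$ generates $V(\lambda+\mu)$ over $U(\lie n^-)$ and $\pi$ is $U(\lie n^-)$-equivariant, so $\pi(\ff_s(V(\lambda)\ts V(\mu)))=\pi(\ff_sU.(v_\lambda\ts v_\mu))=\ff_sU.v_{\lambda+\mu}=\ff_sV(\lambda+\mu)$; one must also invoke that $\gr$ of a surjection of filtered modules is surjective. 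Once this identification is clean, the spanning statement follows formally and the dimension count closes the argument, completing the proof of part~(1).
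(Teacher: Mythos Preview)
Your spanning argument has a genuine gap. You assert that because $\{f^{\mathbf{s}}v_{\lambda+\mu}\mid \mathbf{s}\in S(\lambda+\mu)\}$ is a basis of $V(\lambda+\mu)$, the corresponding elements span $V^{\ff}(\lambda+\mu)$; but a basis of a filtered vector space need not yield a spanning set of the associated graded via symbols. A toy example: take $V=\mathbb{C}^2$ with $\ff_0=\langle e_1\rangle$, $\ff_1=V$, and the basis $\{e_2,\,e_1+e_2\}$; both basis vectors have symbol in $\ff_1/\ff_0$, so $\ff_0\subset\gr V$ is missed entirely. In your setting, nothing you have written rules out that some $f^{\mathbf{s}}v_{\lambda+\mu}$ with $\deg(f^{\mathbf{s}})=d$ already lies in $\ff_{d-1}V(\lambda+\mu)$, in which case it is zero in degree $d$ of $V^{\ff}(\lambda+\mu)$. (A smaller issue: the identity $\ff_s(V(\lambda)\otimes V(\mu))=\ff_sU.(v_\lambda\otimes v_\mu)$ is also false---the tensor-product filtration is strictly larger than the cyclic one---but this is not the main obstruction.)

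What must actually be proved is the filtration-wise statement: for every $n$, the set $\{f^{\mathbf{s}}v_{\lambda+\mu}\mid \mathbf{s}\in S(\lambda+\mu),\ \deg(f^{\mathbf{s}})\leq n\}$ spans $\ff_nV(\lambda+\mu)$. Equivalently: for any $\mathbf{s}\notin S(\lambda+\mu)$, when one expands $f^{\mathbf{s}}v_{\lambda+\mu}$ in the basis $\{f^{\mathbf{t}}v_{\lambda+\mu}\mid\mathbf{t}\in S(\lambda+\mu)\}$ of $V(\lambda+\mu)$, every nonzero term satisfies $\deg(f^{\mathbf{t}})<\deg(f^{\mathbf{s}})$. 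This strict degree drop is exactly the paper's \emph{Claim}, and its proof genuinely uses the inductive hypothesis on both $V^{\ff}(\lambda)$ and $V^{\ff}(\mu)$: one passes to $V(\lambda)\otimes V(\mu)$, observes that $f^{\mathbf{s}}(v_\lambda\otimes v_\mu)$ vanishes in $V^{\ff}(\lambda)\otimes V^{\ff}(\mu)$ since $\mathbf{s}\notin S(\lambda)+S(\mu)$, and then controls the degrees of the surviving terms via the Minkowski decomposition $\mathbf{t}=\mathbf{t}_1+\mathbf{t}_2$. This degree comparison is the substantive content of the proposition; your outline treats it as formal bookkeeping, which it is not.
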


\begin{proof}
To show this, it suffices to prove that $\ff_nV(\lambda+\mu)\cap\mathcal{B}$ is a basis of $\ff_nV(\lambda+\mu)$. By definition,
$$\ff_nV(\lambda+\mu)\cap\mathcal{B}=\{f^{\mathbf{s}}v_{\lambda+\mu}|\ \deg(f^{\mathbf{s}})\leq n,\ \mathbf{s}\in S(\lambda+\mu)\}.$$
This set is linearly independent in $\ff_nV(\lambda+\mu)$, thus it suffices to show that it is generating. Take $\mathbf{s}\notin S(\lambda+\mu)$ such that $f^{\mathbf{s}}v_{\lambda+\mu}\in\ff_nV(\lambda+\mu)$ is not a basis element. We denote $\deg(f^{\mathbf{s}})=k+1\leq n$; since $\mathcal{B}$ is a basis of $V(\lambda+\mu)\subset V(\lambda)\ts V(\mu)$, we can write
\begin{equation}\label{Eq:1}
f^{\mathbf{s}}(v_{\lambda}\ts v_{\mu})=\sum_{\mathbf{t}\in S(\lambda+\mu)}c_{\mathbf{t}}f^{\mathbf{t}}(v_{\lambda}\ts v_{\mu}).
\end{equation}
\noindent
\textbf{Claim:}
For any $\mathbf{t}\in S(\lambda+\mu)$ such that $c_{\mathbf{t}}\neq 0$, we have $\deg(f^{\mathbf{t}})<\deg(f^{\mathbf{s}})$.
\\
\\
\noindent
\textit{Proof of the claim:}
Since $\mathbf{s}\notin S(\lambda+\mu)$, $f^{\mathbf{s}}(v_\lambda\ts v_{\mu})=0$ in $V^\ff(\lambda)\ts V^\ff(\mu)$. By the inductive hypothesis, in $V(\lambda)\ts V(\mu)$, we have
\begin{equation}\label{Eq:2}
f^{\mathbf{s}}(v_\lambda\ts v_{\mu})=\sum_{\mathbf{r}\in S(\lambda),\ \mathbf{p}\in S(\mu)}c_{\mathbf{r},\mathbf{p}}f^{\mathbf{r}}v_\lambda\ts f^{\mathbf{p}}v_{\mu}
\end{equation}
where $\deg(f^{\mathbf{r}})+\deg(f^{\mathbf{p}})\leq k$ provided $c_{\mathbf{r},\mathbf{p}}\neq 0$. 
\par
We take $\mathbf{t}\in S(\lambda+\mu)$ such that $f^{\mathbf{t}}$ is of maximal degree among $f^{\mathbf{r}}$ satisfying $c_{\mathbf{r}}\neq 0$. It suffices to show that $\deg(f^{\mathbf{t}})<\deg(f^{\mathbf{s}})$. By the Minkowski property, $S(\lambda+\mu)=S(\lambda)+S(\mu)$, hence there exist $\mathbf{t}_1\in S(\lambda)$ and $\mathbf{t}_2\in S(\mu)$ such that $\mathbf{t}=\mathbf{t}_1+\mathbf{t}_2$ and $cf^{\mathbf{t}_1}v_\lambda\ts f^{\mathbf{t}_2}v_{\mu}$ is a summand of $c_{\mathbf{t}} f^{\mathbf{t}}(v_\lambda\ts v_{\mu})$ with constant $c\neq 0$.
\par
We claim that for any $\mathbf{r}\in S(\lambda+\mu)$ such that $\mathbf{t}\neq\mathbf{r}$ and $\deg(f^{\mathbf{r}})\leq\deg(f^{\mathbf{t}})$, $f^{\mathbf{t}_1}v_\lambda\ts f^{\mathbf{t}_2}v_{\mu}$ is not a summand of $f^{\mathbf{r}}(v_\lambda\ts v_{\mu})$. Indeed, we write
$$f^{\mathbf{r}}(v_\lambda\ts v_{\mu})=\sum_{\mathbf{r}_1+\mathbf{r}_2=\mathbf{r}}d_{\mathbf{r}_1,\mathbf{r}_2}f^{\mathbf{r}_1}v_\lambda\ts f^{\mathbf{r}_2}v_{\mu}.$$
If $f^{\mathbf{t}_1}v_\lambda\ts f^{\mathbf{t}_2}v_{\mu}$ were a summand of $f^{\mathbf{r_1}}v_\lambda\ts f^{\mathbf{r}_2}v_{\mu}$ with $d_{\mathbf{r}_1,\mathbf{r}_2}\neq 0$, by the inductive hypothesis (if $f^{\mathbf{r}_1}v_\lambda=\sum_{\mathbf{p}\in S(\lambda)}f^{\mathbf{p}}v_\lambda$ and $f^{\mathbf{r}_2}v_{\mu}=\sum_{\mathbf{q}\in S(\mu)}f^{\mathbf{q}}v_{\mu}$ then $\deg(f^{\mathbf{r}_1})\geq\deg(f^{\mathbf{p}})$ and $\deg(f^{\mathbf{r}_2})\geq\deg(f^{\mathbf{q}})$), 
$$\deg(f^{\mathbf{r}})=\deg(f^{\mathbf{r}_1})+\deg(f^{\mathbf{r}_2})>\deg(f^{\mathbf{t}_1})+\deg(f^{\mathbf{t}_2})=\deg(f^{\mathbf{t}}),$$
where the strict inequality arises from the fact that $(\mathbf{t}_1,\mathbf{t}_2)\neq (\mathbf{r}_1,\mathbf{r}_2)$. This contradiction proves the claim.
\par
As a consequence, in formula (\ref{Eq:1}), the summand $f^{\mathbf{t}_1}v_\lambda\ts f^{\mathbf{t}_2}v_{\mu}$ has a non-zero coefficient and $\mathbf{t}_1\in S(\lambda)$, $\mathbf{t}_2\in S(\mu)$. By formula (\ref{Eq:2}), $\deg(f^{\mathbf{t}_1})+\deg(f^{\mathbf{t}_2})=\deg(f^{\mathbf{t}})\leq k<k+1=\deg(f^{\mathbf{s}})$.\par
This finishes the proof of the claim and hence the proof of the proposition.
\end{proof}

We are left with proving the monomiality of the annihilating ideal. This follows immediately from the following lemma 
\begin{lemma}\label{lem:cartan} For $\lambda, \mu \in \mathcal{P}_+$, the annihilating ideal of $V^\ff (\lambda)\odot V^\ff(\mu)$ is monomial and there exists an $S(\mathfrak{n}^-)$-module isomorphism
\[
V^\ff (\lambda)\odot V^\ff(\mu)\longrightarrow V^\ff(\lambda+\mu).
\]
\end{lemma}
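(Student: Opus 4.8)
The plan is to identify $V^\ff(\lambda)\odot V^\ff(\mu)$ precisely with the cyclic $S(\lie n^-)$-submodule of $V^\ff(\lambda)\ts V^\ff(\mu)$ generated by $v_\lambda\ts v_\mu$, and then transport the monomial-ideal statement through the isomorphism. Here $\odot$ should denote the Cartan component inside the tensor product (at the graded level). First I would make the bookkeeping explicit: by Proposition~\ref{Prop:Indep} together with Theorem~\ref{ffl2} (Minkowski property $S(\lambda+\mu)=S(\lambda)+S(\mu)$), the set $\{f^{\mathbf s}(v_\lambda\ts v_\mu)\mid \mathbf s\in S(\lambda+\mu)\}$ is linearly independent in $V^\ff(\lambda)\ts V^\ff(\mu)$, so the cyclic submodule $S(\lie n^-).(v_\lambda\ts v_\mu)$ has dimension at least $|S(\lambda+\mu)|=\dim V(\lambda+\mu)$. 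On the other hand the surjection $S(\lie n^-)\to V^\ff(\lambda+\mu)$ dualizes, via the inclusion of the Cartan component at the bottom of the filtration, to a surjection of $S(\lie n^-)$-modules from this cyclic submodule onto $V^\ff(\lambda+\mu)$ (the same argument used just before Proposition~\ref{Prop:Proj}), and the latter also has dimension $|S(\lambda+\mu)|$ by Proposition~\ref{Prop:Proj}. Comparing dimensions, that surjection is an isomorphism; this gives the displayed isomorphism $V^\ff(\lambda)\odot V^\ff(\mu)\to V^\ff(\lambda+\mu)$.

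It remains to show the annihilating ideal of $V^\ff(\lambda)\odot V^\ff(\mu)$ is monomial. Since an $S(\lie n^-)$-module isomorphism preserves annihilating ideals, once the isomorphism above is in hand, monomiality of the annihilator of $V^\ff(\lambda)\odot V^\ff(\mu)$ is \emph{equivalent} to monomiality of $I^\ff(\lambda+\mu)$ — but we want to prove the latter, so I must argue directly on the tensor side. The key point is that $I^\ff(\lambda)$ and $I^\ff(\mu)$ are monomial by the inductive hypothesis, so $V^\ff(\lambda)\ts V^\ff(\mu)\cong S(\lie n^-)/I^\ff(\lambda)\ts S(\lie n^-)/I^\ff(\mu)$ has, as a basis, the products $f^{\mathbf r}v_\lambda\ts f^{\mathbf p}v_\mu$ with $\mathbf r\in S(\lambda),\ \mathbf p\in S(\mu)$. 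I would then argue that for $\mathbf s\notin S(\lambda+\mu)=S(\lambda)+S(\mu)$, the monomial $f^{\mathbf s}$ already annihilates $v_\lambda\ts v_\mu$ in $V^\ff(\lambda)\ts V^\ff(\mu)$: writing $f^{\mathbf s}(v_\lambda\ts v_\mu)=\sum_{\mathbf s_1+\mathbf s_2=\mathbf s} f^{\mathbf s_1}v_\lambda\ts f^{\mathbf s_2}v_\mu$, each term with $f^{\mathbf s_1}v_\lambda\ne 0\ne f^{\mathbf s_2}v_\mu$ forces $\mathbf s_1\in S(\lambda),\mathbf s_2\in S(\mu)$ (monomiality of the two smaller ideals), hence $\mathbf s\in S(\lambda)+S(\mu)$, a contradiction. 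Thus $f^{\mathbf s}\in I^\ff(\lambda)\odot I^\ff(\mu)$, the annihilator of $v_\lambda\ts v_\mu$ in the tensor product; since the remaining monomials $f^{\mathbf s}$, $\mathbf s\in S(\lambda+\mu)$, are linearly independent on $v_\lambda\ts v_\mu$, the annihilator of the cyclic submodule is exactly the monomial ideal $\sspan\{f^{\mathbf s}\mid \mathbf s\notin S(\lambda+\mu)\}$, and transporting along the isomorphism identifies it with $I^\ff(\lambda+\mu)$.

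The main obstacle I anticipate is not the dimension count but verifying cleanly that the projection $V^\ff(\lambda)\ts V^\ff(\mu)\supset S(\lie n^-).(v_\lambda\ts v_\mu)\twoheadrightarrow V^\ff(\lambda+\mu)$ is well-defined as a map of filtered (graded) modules and is surjective — i.e. that $v_\lambda\ts v_\mu$ really does sit in the degree-zero piece of the filtration on $V(\lambda)\ts V(\mu)$ coming from the Cartan component, so that the earlier identification ${\gr}_\ff(V(\lambda)\ts V(\mu))=V^\ff(\lambda)\ts V^\ff(\mu)$ restricts correctly to the Cartan summand. This is exactly the compatibility already exploited in the paragraph preceding Proposition~\ref{Prop:Proj}, so I would cite that reasoning rather than redo it; the only genuinely new content beyond Proposition~\ref{Prop:Proj} is the monomiality bookkeeping in the previous paragraph, which is elementary given that $I^\ff(\lambda),I^\ff(\mu)$ are monomial.
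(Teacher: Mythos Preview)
Your proposal is correct and follows essentially the same route as the paper: both use the inductive monomiality of $I^\ff(\lambda)$ and $I^\ff(\mu)$ to show that $f^{\mathbf s}(v_\lambda\otimes v_\mu)=0$ in $V^\ff(\lambda)\otimes V^\ff(\mu)$ whenever $\mathbf s\notin S(\lambda)+S(\mu)$, and then combine this with Proposition~\ref{Prop:Proj} and a dimension count to obtain the isomorphism. One small point of ordering: the surjection $V^\ff(\lambda)\odot V^\ff(\mu)\twoheadrightarrow V^\ff(\lambda+\mu)$ in the direction you state in your first paragraph actually requires knowing that every $f^{\mathbf s}$ with $\mathbf s\notin S(\lambda+\mu)$ kills $v_{\lambda+\mu}$ in $V^\ff(\lambda+\mu)$---this is the content of the \emph{claim inside} Proposition~\ref{Prop:Proj}, not of the paragraph preceding it---so the paper (sensibly) proves the monomiality on the tensor side first and only then constructs the map; you should swap your two paragraphs or cite the claim in Proposition~\ref{Prop:Proj} directly.
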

\begin{proof}
Let $\mathbf{s} \notin S(\lambda) + S(\mu) = S(\lambda + \mu)$, then 
\[
f^{\mathbf{s}}(v_\lambda \otimes v_{\mu}) = \sum_{\mathbf{s_1} + \mathbf{s_2} = \mathbf{s} } c_{\mathbf{s}_1, \mathbf{s}_2} f^{\mathbf{s}_1}v_\lambda \otimes f^{\mathbf{s}_2}v_{\mu}
\]
where either $\mathbf{s}_1 \notin S(\lambda)$ or $\mathbf{s}_2 \notin S(\mu)$. Since by induction the annihilating ideals for the factors are monomial we have $f^{\mathbf{s}}(v_\lambda \otimes v_{\mu})  = 0$. And so the annihilating ideal of the Cartan component is monomial. \par
It now suffices to show that if $\mathbf{s} \notin S(\lambda + \mu)$, then $f^{\mathbf{s}}v_{\lambda + \mu} = 0$ in $V^\ff(\lambda + \mu)$. But this follows from Proposition~\ref{Prop:Proj} and the claim therein. This implies that there is a surjective map of $S(\lie n^-)$-modules 
\[
V^\ff (\lambda) \odot V^\ff(\mu)\longrightarrow V^\ff(\lambda+\mu)
\]
which is an isomorphism for dimension reasons. This proves Lemma~\ref{lem:cartan} and hence also the monomiality statement of Theorem~\ref{main-thm}.
\end{proof}

\subsection{Other types}\label{counter-example}
The monomiality of the annihilating ideal $I^\ff(\lambda)$ does not hold in general. Consider the Lie algebra $\mathfrak{so}_8$ of type $D_4$: let $Q$ be the quiver of type $D_4$ with node $2$ in the center as a sink. A grading arising from this quiver is: $\text{degree } 1: f_{4}$; $\text{degree } 2:  f_{12}, f_{23}, f_{24}$; $\text{degree } 5:  f_{123}, f_{124}, f_{234}$; $\text{degree } 6:  f_{1}, f_{2}, f_{3}, f_{12234}$; $\text{degree } 10:  f_{1234}$.
\par
Consider the weight space of weight $-\varpi_4$ in $V(\varpi_1+\varpi_3)$, it is of dimension 3. It is known that $f_1f_{12234}f_{234}+f_{124}f_{1234}f_{23}$ is in $I^\ff(\varpi_1+\varpi_3)$, but $f_1f_{12234}f_{234}$ and $f_{124}f_{1234}f_{23}$ could not be both in it. Similar counterexamples exist for any orientation of the quiver and any grading arising from this orientation.


\section{Quantum groups}\label{two}
A basic reference of quantum groups is \cite{Jantzen}.

\subsection{Recollections on quantum groups}\label{sec-2-1}
Let $\g$ be a simple Lie algebra of rank $n$ with Cartan matrix $C=(c_{ij})\in\Mat_n(\mathbb{Z})$. Let $D=\diag(d_1,\cdots,d_n)\in\Mat_n(\mathbb{Z})$ be a diagonal matrix symmetrizing $C$, thus $A=DC=(a_{ij})\in\Mat_n(\mathbb{Z})$ is the symmetrized Cartan matrix. Let $U_q(\g)$ be the corresponding quantum group over $\mc(q)$: it is a Hopf algebra generated by $E_i$, $F_i$ and $K_i^{\pm 1}$ for $i=1,\cdots,n$, subject to the following relations: for $i,j=1,\cdots,n$,
$$K_iK_i^{-1}=K_i^{-1}K_i=1,\ \ K_iE_jK_i^{-1}=q_i^{c_{ij}}E_j,\ \ K_iF_jK_i^{-1}=q_i^{-c_{ij}}F_j,$$
$$E_iF_j-F_jE_i=\delta_{ij}\frac{K_i-K_i^{-1}}{q_i-q_i^{-1}},$$
and for $i\neq j$,
$$\sum_{r=0}^{1-c_{ij}}(-1)^r E_i^{(1-c_{ij}-r)}E_jE_i^{(r)}=0,\ \ \sum_{r=0}^{1-c_{ij}}(-1)^rF_i^{(1-c_{ij}-r)}F_jF_i^{(r)}=0,$$
where 
\[
q_i=q^{d_i},\, \, [n]_q!=\prod_{i=1}^n \frac{q^n-q^{-n}}{q-q^{-1}},\  \ E_i^{(n)}=\frac{E_i^n}{[n]_{q_i}!}\ \ \text{and}\ \ F_i^{(n)}=\frac{F_i^n}{[n]_{q_i}!}.
\]

There exists a unique Hopf algebra structure $(\Delta,\ve,S)$ on $U_q(\g)$ such that, for $i=1,\cdots,n$, 
$$\Delta(K_i^{\pm 1})=K_i^{\pm 1}\ts K_i^{\pm 1},\ \ \Delta(E_i)=E_i\ts 1+ K_i\ts E_i,\ \ \Delta(F_i)=F_i\ts K_i^{-1}+1\ts F_i,$$
$$\ve(K_i^{\pm 1})=1,\ \ \ve(E_i)=0,\ \ \ve(F_i)=0,$$
$$S(E_i)=-K_i^{-1}E_i,\ \ S(F_i)=-F_iK_i,\ \ S(K_i)=K_i^{-1},\ \ S(K_i^{-1})=K_i.$$
Let $U_q(\mathfrak{n}^+)$ (resp. $U_q(\mathfrak{n}^-)$; $U_q^0$) be the sub-algebra of $U_q(\g)$ generated by $E_i$ (resp. $F_i$; $K_i^{\pm 1}$) for $i=1,\cdots,n$. There exists a triangular decomposition 
$$U_q(\g)\cong U_q(\mathfrak{n}^+)\ts U_q^0\ts U_q(\mathfrak{n}^-).$$

We fix several Lie theoretical notations. Let $W$ be the Weyl group of $\g$ with generators $s_1,\cdots,s_n$ and longest element $w_0\in W$. We fix a reduced expression $w_0=s_{i_1}\cdots s_{i_N}$ where $N=\#\Delta_+$. For $1\leq t\leq N$, we denote $\beta_t=s_{i_1}\cdots s_{i_{t-1}}(\alpha_{i_t})$; then $\Delta_+=\{\beta_t|\ t=1,\cdots,N\}$. The choice of the reduced expression of $w_0$ endows $\Delta_+$ with a total order such that $\beta_1<\beta_2<\cdots<\beta_N$.
\par
Let $T_i=T_{i,1}''$, $i=1,\cdots,n$ be Lusztig's automorphisms:
$$T_i(E_i)=-F_iK_i,\ \ T_i(F_i)=-K_i^{-1}E_i,\ \ T_i(K_j)=K_jK_i^{-c_{ij}},$$
for $i=1,\cdots,n$, and for $j\neq i$,
$$T_i(E_j)=\sum_{r+s=-c_{ij}}(-1)^rq_i^{-r}E_i^{(s)}E_jE_i^{(r)},\ \ T_i(F_j)=\sum_{r+s=-c_{ij}}(-1)^rq_i^{r}F_i^{(r)}F_jF_i^{(s)}.$$
We refer to Chapter 37 in \cite{Lusztig} for details. The PBW root vector $F_{\beta_t}$ associated to a positive root $\beta_t$ is defined by:
$$F_{\beta_t}=T_{i_1}T_{i_2}\cdots T_{i_{t-1}}(F_{i_t})\in U_q(\n^-).$$
The PBW theorem of quantum groups affirms that the set
$$\{F_{\beta_1}^{c_1}F_{\beta_2}^{c_2}\cdots F_{\beta_N}^{c_N}|\ (c_1,\cdots,c_N)\in\mathbb{N}^N\}$$
forms a $\mc(q)$-basis of $U_q(\mathfrak{n}^-)$ (\cite{Lusztig}, Corollary 40.2.2).
\par
For $\lambda\in\mathcal{P}_+$, we denote by $V_q(\lambda)$ the irreducible representation of $U_q(\g)$ of highest weight $\lambda$ and type $1$ with highest weight vector $\mathbf{v}_\lambda$.

\subsection{Specialization of quantum groups}
Details on the specialization of quantum groups can be found in \cite{HK02}, Section 3 and 4 of Chapter 3.
\par
Let $\mathbf{A}_1$ be the set of functions in $\mc(q)$ which are regular at $q=1$. It is a local ring with the maximal ideal $(q-1)$. The $\mathbf{A}_1$-form of $U_q(\g)$, denoted by ${}_{\mathbf{A}_1}U_q(\g)$, is defined as its $\mathbf{A}_1$-subalgebra generated by $E_i^{(k)}$, $F_i^{(k)}$, $K_i^{\pm 1}$, $(K_i;0)_q:=\frac{K_i-1}{q-1}$ for $i=1,\cdots,n$ and $k\geq 1$.
\par
The specialization of the quantum group $U_q(\g)$ at $q=1$ is the map 
$${}_{\mathbf{A}_1}U_q(\g)\ts_{\mathbf{A}_1}\mathbf{A}_1\ra {}_{\mathbf{A}_1}U_q(\g)\ts_{\mathbf{A}_1}\mc$$
by taking the tensor product with ${\ev}_1:\mathbf{A}_1\ra \mathbf{A}_1/(q-1)\cong\mc$, the evaluation map at $1$. The specialized algebra ${}_{\mathbf{A}_1}U_q(\g)\ts_{\mathbf{A}_1}\mc$ is isomorphic to the universal enveloping algebra $U(\g)$. Let $\xi:{}_{\mathbf{A}_1}U_q(\g)\ra U(\g)$ be the specialization map. We let $e_i$, $f_i$ and $h_i$, $i=1,\cdots,n$ denote the generators of $U(\g)$, then the specialization map $\xi$ is a Hopf algebra isomorphism sending $E_i\mapsto e_i$, $F_i\mapsto f_i$, $K_i^{\pm 1}\mapsto 1$ and $(K_i;0)_q\mapsto h_i$ for $i=1,\cdots,n$. Here $U(\g)$ is endowed with its standard Hopf algebra structure such that the space of primitive elements in $U(\g)$ is exactly $\g$.
\par
We turn to representations: the $\mathbf{A}_1$-form of the representation $V_q(\lambda)$ of $U_q(\g)$ is defined by: ${}_{\mathbf{A}_1}V_q(\lambda):={}_{\mathbf{A}_1}U_q(\g).\mathbf{v}_\lambda$. The specialization map is similarly defined as above:
$$\xi_\lambda:=\id\ts{\ev}_1:{}_{\mathbf{A}_1}V_q(\lambda)\ts_{\mathbf{A}_1}\mathbf{A}_1\ra {}_{\mathbf{A}_1}V_q(\lambda)\ts_{\mathbf{A}_1}\mc\cong V(\lambda).$$

\subsection{Specializations of PBW root vectors}
The aim of this subsection is to prove the following 

\begin{proposition}\label{Prop:SpePBW}
For any $\beta\in\Delta_+$, $\xi(F_\beta)\in\g\subset U(\g)$ is a non-zero element in the root space $\g_{-\beta}$.
\end{proposition}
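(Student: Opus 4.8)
The plan is to proceed by induction on the length $t$ of the PBW root vector $F_{\beta_t} = T_{i_1}\cdots T_{i_{t-1}}(F_{i_t})$, tracking not only the elements $F_{\beta_t}$ themselves but also the behaviour of Lusztig's automorphisms $T_i$ under specialization. The base case $t=1$ is immediate, since $F_{\beta_1} = F_{i_1}$ specializes to $f_{i_1} \in \g_{-\alpha_{i_1}}$ by the very definition of the specialization map $\xi$. For the inductive step one must understand $\xi(T_i(x))$ for $x$ ranging over the relevant generators.

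First I would establish that the braid group operators $T_i$ preserve the $\mathbf{A}_1$-form ${}_{\mathbf{A}_1}U_q(\g)$ — this is standard (see \cite{Lusztig}, Chapter 37, or \cite{Jantzen}), since the defining formulas for $T_i$ on $E_j, F_j, K_j$ involve only divided powers $E_i^{(s)}, F_i^{(r)}$ and the coefficients $(-1)^r q_i^{\pm r}$, all of which lie in $\mathbf{A}_1$. Hence each $T_i$ descends to an algebra automorphism $\bar T_i$ of the specialization $U(\g) = {}_{\mathbf{A}_1}U_q(\g)\ts_{\mathbf{A}_1}\mc$, i.e.\ $\xi \circ T_i = \bar T_i \circ \xi$. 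Next I would identify $\bar T_i$ explicitly: specializing the formulas for $T_i$ at $q=1$ gives $\bar T_i(e_i) = -f_i$, $\bar T_i(f_i) = -e_i$, $\bar T_i(h_j) = h_j - c_{ij}h_i$, and for $j\neq i$, $\bar T_i(e_j) = \frac{1}{(-c_{ij})!}(\ad e_i)^{-c_{ij}}(e_j)$ and similarly for $f_j$ (the divided power $E_i^{(s)}$ with $s = -c_{ij}$ surviving, lower-order terms combining into iterated commutators). These are precisely the formulas for the action on $U(\g)$ of the Tits/braid lift $\widetilde{s_i}$ of the simple reflection $s_i \in W$ coming from the adjoint action; in particular $\bar T_i$ restricts to a Lie algebra automorphism of $\g \subset U(\g)$ which, on weight spaces, sends $\g_\alpha$ isomorphically onto $\g_{s_i(\alpha)}$.

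Granting this, the inductive step is immediate: assuming $\xi(F_{\beta_{t}}) = \xi(T_{i_1}\cdots T_{i_{t-1}}(F_{i_t}))$ is a nonzero vector in $\g_{-\beta_t}$, we compute
\[
\xi(F_{\beta_{t+1}}) = \xi\bigl(T_{i_1}\cdots T_{i_{t}}(F_{i_{t+1}})\bigr) = \bar T_{i_1}\cdots \bar T_{i_{t}}\bigl(\xi(F_{i_{t+1}})\bigr) = \bar T_{i_1}\cdots\bar T_{i_t}(f_{i_{t+1}}),
\]
and since each $\bar T_{i_k}$ is a Lie algebra automorphism of $\g$ carrying root spaces to root spaces according to the reflection $s_{i_k}$, this is a nonzero element of $\g_{-s_{i_1}\cdots s_{i_t}(\alpha_{i_{t+1})}} = \g_{-\beta_{t+1}}$, using the standard combinatorial fact that $\beta_{t+1} = s_{i_1}\cdots s_{i_t}(\alpha_{i_{t+1}})$ is exactly the $(t{+}1)$-st root in the ordering induced by the reduced word.

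The main obstacle is the middle step: one must verify carefully that specialization commutes with $T_i$ at the level of the integral forms, i.e.\ that $T_i$ is well-defined on ${}_{\mathbf{A}_1}U_q(\g)$ and that the resulting $\bar T_i$ preserves $\g \subset U(\g)$ and acts as the expected root-space permutation. The subtlety is purely about divided powers and $\mathbf{A}_1$-integrality of the structure constants; there is no genuine difficulty, but it is the step where the argument could go wrong if one is cavalier about which algebra the operators act on. Everything else is bookkeeping with the combinatorics of reduced words in $W$.
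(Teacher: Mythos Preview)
Your proof is correct and follows essentially the same approach as the paper's: both argue that Lusztig's $T_i$ descends through specialization to an algebra automorphism $\bar T_i$ of $U(\g)$ which preserves $\g$, and then iterate along the reduced word. The only difference is presentational---the paper computes $\xi(T_i(F_j))\propto \ad(f_i)^{-c_{ij}}(f_j)$ directly and then extends to arbitrary $x$ with $\xi(x)\in\g$ by writing $\xi(x)$ as iterated brackets of Chevalley generators, whereas you package the same computation as the identification of $\bar T_i$ with the Tits lift of $s_i$ and invoke its root-space permutation property.
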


\begin{proof}
Since $\xi$ is an isomorphism of algebra, $\xi(F_\beta)\in U(\g)_{-\beta}$ is a non-zero element. We divide the proof of $\xi(F_\beta)\in\g$ into two steps:
\begin{enumerate}
\item For any $i,j=1,\cdots,n$, we have $\xi(T_i(F_j))\in\g$. Indeed, for $i=j$, $T_i(F_i)=-K_i^{-1}E_i$, and thus, by definition, $\xi(T_i(F_i))=-e_i\in \g$. For $i\neq j$, by definition, 
$$\xi(T_i(F_j))=\sum_{r+s=-c_{ij}}(-1)^r f_i^{(r)}f_jf_i^{(s)},\ \ \text{where}\ \ f_i^{(r)}=\frac{f_i^r}{r!}.$$
This shows that $\xi(T_i(F_j))$ is proportional to $\ad(f_i)^{-c_{ij}}(f_j)\in\g\subset U(\g)$.
\item If $x\in U_q(\mathfrak{n}^-)$ such that $\xi(x)\in\g$, then for any $i=1,\cdots,n$, $\xi(T_i(x))\in\g$. Indeed, since $x\in U_q(\mathfrak{n}^-)$ and $\xi(x)\in\g$, the element $\xi(x)$ can be written as a sum of successive brackets of the $f_i$ for $i=1,\cdots,n$. Without loss of generality, by applying the Jacobi identity, we denote one of such brackets by $[f_{i_1},[f_{i_2},\cdots,[f_{i_{p-1}},f_{i_p}]\cdots]]$. Since $\xi$ is an algebra morphism, 
$$\xi(T_i(x))=\sum_{l=1}^p [f_{i_1},[\cdots,[\xi(T_i(F_{i_l})),[\cdots,[f_{i_{p-1}},f_{i_p}]\cdots]\cdots]].$$
By (1), $\xi(T_i(F_{i_l}))\in\g$ for any $l=1,\cdots,p$, proving $\xi(T_i(x))\in\g$.
\end{enumerate}
\end{proof}

\subsection{Why a quantum PBW-type filtration?}
The main motivation of the study of the quantum PBW filtration is to generalize the results in the classical case to quantum groups. We would like to mimic the strategy $V(\lambda)\leadsto V^a(\lambda)\leadsto V^t(\lambda)$ outlined in Section \ref{Sec:2.2} and \ref{Sec:2.3} to study the degenerations of an irreducible module $V_q(\lambda)$.
\par
So if we fix an order of the positive roots, we define the $q$-commutative polynomial algebra $S_q(\lie n^-)$ as the algebra generated by the homogeneous generators $\{F_{\beta_i}|\ i=1,\cdots,N\}$ and relations:
$$F_{\beta_j}F_{\beta_i}=q^{(\beta_i,\beta_j)}F_{\beta_i}F_{\beta_j},\ \ \ \text{for}\ \beta_i<\beta_j.$$
\par
The problem occurs in finding an object in the middle as an analogue of $V^a(\lambda)$. $U_q(\mathfrak{n}^-)$ can be generated by the PBW root vectors $F_{\beta_i}$, $i=1,\cdots,N$. We may copy the classical case to impose $\deg(F_{\beta_i})=1$. 

\begin{example}\label{Ex:counterex}
\begin{enumerate}
\item Let $\g=\mathfrak{sl}_4$ be of type $A_3$. Fix the reduced expression $w_0=s_1s_2s_1s_3s_2s_1$ of the longest element $w_0$ in the Weyl group of $\g$. We denote by $F_{i\ i+1\cdots k}$ the PBW root vector corresponding to the root $\alpha_i+\alpha_{i+1}+\cdots+\alpha_k$. The following relation holds in $U_q(\mathfrak{n}^-)$:
$$F_{23}F_{12}=F_{12}F_{23}-(q-q^{-1})F_2F_{123}.$$
\item Let $\g$ be of type $G_2$. Fix the reduced expression $w_0=s_1s_2s_1s_2s_1s_2$ of the longest element $w_0$ in the Weyl group of $\g$. We have in $U_q(\mathfrak{n}^-)$:
$$F_{3\alpha_1+2\alpha_2}F_{3\alpha_1+\alpha_2}=q^{-3}F_{3\alpha_1+\alpha_2}F_{3\alpha_1+2\alpha_2}+(1-q^{-2}-q^{-4}+q^{-6})F_{2\alpha_1+\alpha_2}^{(3)},$$
which specializes to $f_{3\alpha_1+2\alpha_2}f_{3\alpha_1+\alpha_2}=f_{3\alpha_1+\alpha_2}f_{3\alpha_1+2\alpha_2}$ in $U(\mathfrak{n}^-)$.
\end{enumerate}
\end{example}
According to this example, either the grading defined above does not give a filtered algebra, or the associated graded algebra is no longer $S_q(\mathfrak{n}^-)$. 
\par
We should mention here the de Concini-Kac filtration (\cite{DK}) which arises from a total order (lexicographic induced from a total order on the positive roots) on monomials in $U_q(\lie n^-)$. This filtration is not homogeneous, so this won't be a suitable filtration to mimic $V^a(\lambda)\leadsto V^t(\lambda)$.
\par
In brief, the known filtrations are not suitable for our purpose: this suggests us to search for new $\mathbb{N}$-filtrations on $U_q(\mathfrak{n}^-)$ by imposing different degrees on the PBW root vectors such that the associated graded algebra is a $q$-commutative polynomial algebra. In the next section, we will construct candidates using Hall algebras.


\section{Filtration arising from Hall algebras}\label{Sec:Hall}\label{three}
A basic reference on quiver representations and Auslander-Reiten theory is \cite{ASS}. For Hall algebras, we refer to \cite{Rin90}.

\subsection{Quiver representations}
Let $Q$ be a Dynkin quiver of type $A$, $D$ or $E$, that is, $Q$ has set of vertices $I$, and the number of arrows between two different vertices $i$ and $j$ (in either direction) equals $-a_{ij}$.
\par
Let $\mathbb{K}$ be an arbitrary field. We consider the category ${\rep}_{\mk}Q$ of finite dimensional $\mk$-representations of $Q$, which is an abelian $\mk$-linear category of global dimension at most one. We denote by $\mathbf{dim}M$ the dimension vector of a representation $M$, viewed as an element of the root lattice via 
$$\mathbf{dim}M = \sum_{i\in I} \dim(M_i)\alpha_i.$$ 
The homological Euler form 
$$\langle\mathbf{dim}M,\mathbf{dim}N\rangle = \dim\Hom(M,N)-\dim{\Ext}^1(M,N)$$ 
defines a (non-symmetric) bilinear form on the root lattice, whose symmetrization is the bilinear form defined by the Cartan matrix.
\par
The simple representations $S_i$ are naturally parametrized by the vertices $i\in I$ (that is, by the set of simple roots), and the indecomposable representations are naturally parametrized by the positive roots: for each $\alpha\in\Delta_+$, there exists a unique (up to isomorphism) indecomposable representation $U_\alpha$ such that $\mathbf{dim}U_\alpha = \alpha$. We can thus index the isomorphism classes of representations of $Q$ naturally by functions
from $\Delta_+$ to nonnegative integers. For such a function $\mathbf{m}: \Delta_+\ra \mathbb{N}$, we denote by $M(\mathbf{m})$ the $\mk$-representation $\bigoplus_{\alpha\in\Delta_+} U_\alpha^{\mathbf{m}(\alpha)}$. Note that this parametrization holds over arbitrary fields $\mk$.
\par
The category ${\rep}_{\mk}Q$ is representation-directed, which means that there exists an enumeration $\beta_1,\cdots,\beta_N$ of the positive roots such that $$\Hom(U_{\beta_k},U_{\beta_l})=0\mbox{ for }k>l\mbox{ and }{\Ext}^1(U_{\beta_k},U_{\beta_l})=0\mbox{ for }k\leq l.$$ It is known that there exists a sequence $i_1,\cdots,i_N$ in $I$ such that $s_{i_1}\cdots s_{i_N}$ is a reduced expression for the longest Weyl group element $w_0\in W$, and such that $\beta_k = s_{i_1}\cdots s_{i_{k-1}}(\alpha_{i_k})$ for all $k = 1,\cdots,N$.
\par
For a dimension vector $\mathbf{d} = \sum_{i\in I} d_i\alpha_i$, we define the variety of representations
$$R_{\mathbf{d}}(Q) = \bigoplus_{\alpha:i\ra j} \Hom(\mk^{d_i} , \mk^{d_j} ),$$ on which $G_{\mathbf{d}} = \prod_{i\in I} \GL(\mk^{d_i})$ acts via base change
$$(g_i)_i \cdot (f_\alpha)_\alpha = (g_if_\alpha g_i^{-1})_{\alpha:i\ra j}.$$
Tautologically, the $G_{\mathbf{d}}$-orbits $\mathcal{O}_M$ in $R_{\mathbf{d}}(Q)$ correspond bijectively to the isomorphism classes $[M]$ of $\mk$-representations of $Q$ of dimension vector ${\mathbf{d}}$. We call $N$ a degeneration of $M$, written $M \leq N$, if $\mathcal{O}_N$ belongs to the closure $\overline{\mathcal{O}_M}$. 
For Dynkin quivers, we have $M \leq N$ if and only if $\dim\Hom(V,M) \leq \dim\Hom(V,N)$ for all representations $V$ (equivalently, for all indecomposable representations $V$) of $Q$. In particular, $M$ and $N$ are isomorphic if and only if $\dim\Hom(V,M) = \dim\Hom(V,N)$ for all representations $V$ (equivalently, for all indecomposable representations $V$) of $Q$.

\subsection{Hall algebras} 
Let $\mk=\mathbb{F}_q$ be a finite field.
\par
Given three $\mk$-representations $M$, $N$ and $X$ of $Q$ with associated functions $\mathbf{m}, \mathbf{n}, \mathbf{x}:\Delta_+\ra \mathbb{N}$ as above, let $F_{M,N}^X$ be the number of sub-representations $U$ of $X$ which are isomorphic to $N$, with quotient $X/U$ isomorphic to $M$. These numbers are known to behave polynomially in the size of the field, that is, there exists a polynomial $F_{\mathbf{m},\mathbf{n}}^{\mathbf{x}}(u)\in \mathbb{Z}[u]$ such that its evaluation at the cardinality of the finite field $\mk$ equals to the number $F_{M(\mathbf{m}),M(\mathbf{n})}^{M(\mathbf{x})}$ just defined (we use the fact that we can model the representations over arbitrary fields). By abuse of notation, we also write $F_{M,N}^X(q)$ for $F_{\mathbf{m},\mathbf{n}}^{\mathbf{x}}(q)$.
\par
Define the Hall algebra $H(Q)$ as the $\mc(q)$-algebra with linear basis $u_{[M]}$ indexed by the isomorphism classes $[M]$ in $\rep_{\mk}(Q)$, and with multiplication 
$$u_{[M]} u_{[N]} =q^{\langle \mathbf{dim}M,\mathbf{dim}N\rangle} \sum_{[X]}F_{M,N}^X (q^2)u_{[X]}.$$ 
There exists a unique algebra isomorphism $\eta: U_q(\mathfrak{n}^-) \ra H(Q)$ which maps the Chevalley generator $F_i$ to $u_{[S_i]}$. It is compatible with the PBW-type basis of the quantum group in the following sense: we have 
$$\eta(F^{\mathbf{m}}) = F_{[M]}:= q^{\dim \End(M)-\dim M} u_{[M]}.$$
When $M=\bigoplus_{\alpha\in\Delta_+}U_\alpha^{\mathbf{m}(\alpha)}$, we have
$$F_{[M]}=F_{[U_{\beta_1}]}^{(\mathbf{m}(\beta_1))}\cdots F_{[U_{\beta_N}]}^{(\mathbf{m}(\beta_N))}.$$

\begin{lemma}\label{Lem:LSHall}
For $1\leq k<l\leq N$, we have
$$F_{[U_{\beta_l}]}F_{[U_{\beta_k}]}=q^{(\beta_k,\beta_l)}F_{[U_{\beta_k}]}F_{[U_{\beta_l}]}+\sum_{\mathbf{m}}c_{\mathbf{m}}F_{[U_{\beta_1}]}^{(\mathbf{m}(\beta_1))}\cdots F_{[U_{\beta_N}]}^{(\mathbf{m}(\beta_N))}$$
where all functions $\mathbf{m}$ appearing on the right hand side with non-zero coefficient are only supported on $\beta_{k+1},\cdots,\beta_{l-1}$.
\end{lemma}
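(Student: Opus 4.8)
The plan is to establish the identity directly inside the Hall algebra $H(Q)$, combining the multiplication formula with the representation-directed ordering of the indecomposables. First I would pass to the unnormalized basis via $F_{[M]}=q^{\dim\End M-\dim M}u_{[M]}$; since $Q$ is Dynkin every $U_{\beta_j}$ is a brick ($\dim\End U_{\beta_j}=1$), so this step only changes everything by an overall power of $q$. Applying the Hall multiplication yields
$$u_{[U_{\beta_l}]}u_{[U_{\beta_k}]}=q^{\langle\beta_l,\beta_k\rangle}\sum_{[X]}F^{X}_{U_{\beta_l},U_{\beta_k}}(q^{2})\,u_{[X]},$$
where $X$ ranges over the isomorphism classes admitting a short exact sequence $0\to U_{\beta_k}\to X\to U_{\beta_l}\to 0$. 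I would split this into the split middle term $X_0=U_{\beta_k}\oplus U_{\beta_l}$, designed to produce the leading term $q^{(\beta_k,\beta_l)}F_{[U_{\beta_k}]}F_{[U_{\beta_l}]}$, and the remaining, necessarily non-split, middle terms, which should produce the correction sum.

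The representation-theoretic heart is the claim that every indecomposable summand $U_{\beta_j}$ of a non-split middle term $X$ satisfies $k<j<l$. For $j\le l$: the composite $U_{\beta_j}\hookrightarrow X\twoheadrightarrow U_{\beta_l}$ is either nonzero, so $\Hom(U_{\beta_j},U_{\beta_l})\neq 0$ and directedness forces $j\le l$, or it vanishes, in which case $U_{\beta_j}$ embeds into $U_{\beta_k}=\ker(X\to U_{\beta_l})$ and, being a summand of $X$, is a summand of $U_{\beta_k}$, whence $U_{\beta_j}\cong U_{\beta_k}$. Dually, using $U_{\beta_k}\hookrightarrow X\twoheadrightarrow U_{\beta_j}$, one gets $j\ge k$. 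Finally, if $U_{\beta_k}$ or $U_{\beta_l}$ were itself a summand of $X$, the relevant composite would be an endomorphism of a brick, hence $0$ or invertible, and in both cases the sequence splits, contradicting non-splitness; so $j=k$ and $j=l$ are excluded. Thus $X=\bigoplus_{k<j<l}U_{\beta_j}^{\mathbf m(\beta_j)}$, and rewriting $u_{[X]}$ in the $F$-normalization shows it is a scalar multiple of $F_{[U_{\beta_1}]}^{(\mathbf m(\beta_1))}\cdots F_{[U_{\beta_N}]}^{(\mathbf m(\beta_N))}$ with $\mathbf m$ supported on $\{\beta_{k+1},\cdots,\beta_{l-1}\}$, exactly as claimed.

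It then remains to pin down the coefficient of the split term $X_0$, and this is the step I expect to demand the most care, since the normalization factors, the Euler form, and the relevant Hall numbers must conspire to give \emph{precisely} $q^{(\beta_k,\beta_l)}$ --- an off-by-a-power error would be easy to make. Since $k\le l$ we have $\Ext^1(U_{\beta_k},U_{\beta_l})=0$, so every extension of $U_{\beta_l}$ by $U_{\beta_k}$ splits; together with $\Hom(U_{\beta_l},U_{\beta_k})=0$ this gives $F^{X_0}_{U_{\beta_k},U_{\beta_l}}=1$, hence $u_{[X_0]}=q^{-\langle\beta_k,\beta_l\rangle}u_{[U_{\beta_k}]}u_{[U_{\beta_l}]}$. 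On the other hand the subrepresentations of $X_0$ isomorphic to $U_{\beta_k}$ with quotient $U_{\beta_l}$ are exactly the graphs of the morphisms in $\Hom(U_{\beta_k},U_{\beta_l})$, so $F^{X_0}_{U_{\beta_l},U_{\beta_k}}(q^{2})=q^{2\dim\Hom(U_{\beta_k},U_{\beta_l})}$. Substituting both into the expansion, cancelling the common $F$-normalization factor, and using $\langle\beta_k,\beta_l\rangle+\langle\beta_l,\beta_k\rangle=(\beta_k,\beta_l)$ together with $\langle\beta_k,\beta_l\rangle=\dim\Hom(U_{\beta_k},U_{\beta_l})$ (again because $\Ext^1(U_{\beta_k},U_{\beta_l})=0$), all powers of $q$ collapse and the coefficient of $F_{[U_{\beta_k}]}F_{[U_{\beta_l}]}$ is exactly $q^{(\beta_k,\beta_l)}$. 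As a sanity check, this last identity also follows by transporting the Levendorskii--Soibelman straightening formula from $U_q(\n^-)$ through $\eta$.
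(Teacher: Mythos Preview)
The paper states this lemma without proof; it is the Hall-algebra incarnation of the Levendorskii--Soibelman straightening relations for $U_q(\mathfrak{n}^-)$, transported through the isomorphism $\eta$, and the authors evidently treat it as known. Your direct Hall-algebraic argument therefore supplies more than the paper does, and it is essentially correct: the Hall multiplication, the representation-directedness of ${\rep}_{\mk}Q$, and the brick property of the $U_{\beta_j}$ combine exactly as you describe to control both the support of the correction terms and the leading coefficient.

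There is one small imprecision in the exclusion of $j=k$ and $j=l$. You write that the relevant endomorphism of a brick is ``$0$ or invertible, and in both cases the sequence splits''. Only the invertible case yields a splitting directly. In the zero case the sequence does not split; instead, the vanishing of the composite forces the summand $U_{\beta_j}$ to sit inside (respectively, be a quotient of) the \emph{opposite} endpoint, and then the same retraction/section trick you used a few lines earlier makes it a direct summand of that indecomposable, hence isomorphic to it --- contradicting $k\neq l$. So the zero case is simply impossible, after which the invertible case indeed splits the sequence and gives the desired contradiction with non-splitness. This is a wording issue rather than a genuine gap.

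Your computation of the split-term coefficient is clean: the identification $F^{X_0}_{U_{\beta_l},U_{\beta_k}}(q^2)=q^{2\dim\Hom(U_{\beta_k},U_{\beta_l})}$ via graphs of morphisms, together with $\Ext^1(U_{\beta_k},U_{\beta_l})=0$ and the symmetrization $\langle\beta_k,\beta_l\rangle+\langle\beta_l,\beta_k\rangle=(\beta_k,\beta_l)$, collapses the powers of $q$ to exactly $q^{(\beta_k,\beta_l)}$ as required.
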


\subsection{Filtrations induced by dimensions of Hom-spaces}
Let $R$ be a commutative ring, and let $A$ be a unital $R$-algebra which is free as an $R$-module. We consider ($\mathbb{N}$-)filtrations $\ff_\bullet = (\ff_0 \subset\ff_1 \subset \ff_2 \subset \cdots)$ on $A$ such that $\bigcup_n \ff_n = A$ and $\ff_m\ff_n \subset \ff_{m+n}$ for all $m,n \in \mathbb{N}$. We call $\ff_\bullet$ \emph{normalized} if $\ff_0=R.1$.
Let $\mathcal{B}$ be a basis of $A$ as an $R$-module. The basis $\mathcal{B}$ is called \emph{compatible with the filtration} $\ff_\bullet$ if $\ff_n \cap \mathcal{B}$ is an $R$-basis of $\ff_n$ for all $n$. We denote by $c_{b,b'}^{b''}$ the structure constants of $A$ with respect to $\mathcal{B}$, defined by 
$$bb'=\sum_{b''}c_{b,b'}^{b''}b''$$ 
for $b$, $b'$ and $b''\in \mathcal{B}$. 

\begin{lemma}\label{Lem:compatiblebasis}
The basis $\mathcal{B}$ is compatible with $\ff_\bullet$ if and only if there exists a function $w:\mathcal{B} \ra\mathbb{N}$ such that:
\begin{enumerate}
\item $\ff_n$ is spanned by all $b\in\mathcal{B}$ such that $w(b)\leq n$;
\item $w(b'') \leq w(b) + w(b')$ whenever $c_{b,b'}^{b''}\neq 0$.
\end{enumerate}
If this is the case, the classes $\overline{b}$ of elements $b \in \mathcal{B}$ form an $R$-basis of the associated graded algebra $\gr_{\ff_\bullet}A$ with respect to the filtration $\ff_\bullet$, and multiplication in $\gr_{\ff_\bullet}A$ is given by
$$\overline{b}\cdot\overline{b'} = \sum_{b''\in \mathcal{B},\ w(b'')=w(b)+w(b')} \overline{b''}.$$
\end{lemma}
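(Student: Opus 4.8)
The plan is to prove both implications of Lemma~\ref{Lem:compatiblebasis} and then extract the multiplication formula on the associated graded from the structure constants.

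\textbf{The ``if'' direction.} Suppose a function $w:\mathcal{B}\ra\mathbb{N}$ with properties (1) and (2) is given. Then (1) \emph{defines} $\ff_n$ as $\sspan_R\{b\in\mathcal{B}\mid w(b)\leq n\}$, so by construction $\ff_n\cap\mathcal{B}$ spans $\ff_n$; since $\mathcal{B}$ is an $R$-basis of $A$, any subset of it is $R$-linearly independent, so $\ff_n\cap\mathcal{B}$ is an $R$-basis of $\ff_n$, i.e. $\mathcal{B}$ is compatible. One still has to check that $\ff_\bullet$ is actually a filtration: $\ff_0\subset\ff_1\subset\cdots$ is immediate from the definition, $\bigcup_n\ff_n=A$ because $w$ takes values in $\mathbb{N}$ and $\mathcal{B}$ spans $A$, and $\ff_m\ff_n\subset\ff_{m+n}$ follows from property (2): for $b,b'\in\mathcal{B}$ with $w(b)\leq m$, $w(b')\leq n$, every $b''$ occurring in $bb'$ with nonzero coefficient satisfies $w(b'')\leq w(b)+w(b')\leq m+n$.

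\textbf{The ``only if'' direction.} Suppose $\mathcal{B}$ is compatible with $\ff_\bullet$. Define $w(b):=\min\{n\in\mathbb{N}\mid b\in\ff_n\}$, which is finite since $\bigcup_n\ff_n=A$. For (1): if $w(b)\leq n$ then $b\in\ff_n\cap\mathcal{B}$; conversely $\ff_n$ is spanned by $\ff_n\cap\mathcal{B}=\{b\in\mathcal{B}\mid b\in\ff_n\}=\{b\in\mathcal{B}\mid w(b)\leq n\}$ by compatibility. For (2): let $b,b'\in\mathcal{B}$ with $m=w(b)$, $m'=w(b')$. Then $bb'\in\ff_m\ff_{m'}\subset\ff_{m+m'}$, and since $\ff_{m+m'}\cap\mathcal{B}$ is an $R$-basis of $\ff_{m+m'}$, expanding $bb'=\sum_{b''}c_{b,b'}^{b''}b''$ in the basis $\mathcal{B}$ forces every $b''$ with $c_{b,b'}^{b''}\neq0$ to lie in $\ff_{m+m'}\cap\mathcal{B}$, i.e. $w(b'')\leq m+m'=w(b)+w(b')$.

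\textbf{The graded basis and multiplication.} Assuming compatibility, $\gr_{\ff_\bullet}A=\bigoplus_n\ff_n/\ff_{n-1}$ (with $\ff_{-1}=0$). Since $\ff_n\cap\mathcal{B}$ and $\ff_{n-1}\cap\mathcal{B}$ are $R$-bases of $\ff_n$ and $\ff_{n-1}$, the quotient $\ff_n/\ff_{n-1}$ is free on the images $\overline{b}$ of those $b\in\mathcal{B}$ with $w(b)=n$; summing over $n$, the $\overline{b}$, $b\in\mathcal{B}$, form an $R$-basis of $\gr_{\ff_\bullet}A$. For the product: $\overline{b}\cdot\overline{b'}$ is the image in $\ff_{w(b)+w(b')}/\ff_{w(b)+w(b')-1}$ of $bb'=\sum_{b''}c_{b,b'}^{b''}b''$. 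By (2) every term has $w(b'')\leq w(b)+w(b')$, and the terms with $w(b'')<w(b)+w(b')$ lie in $\ff_{w(b)+w(b')-1}$, hence vanish in the quotient; so $\overline{b}\cdot\overline{b'}=\sum_{b''\in\mathcal{B},\,w(b'')=w(b)+w(b')}c_{b,b'}^{b''}\overline{b''}$. The only delicate point is whether the formula as stated (with implicit coefficient $1$) is literally intended or whether the coefficients $c_{b,b'}^{b''}$ should be displayed; I would state the formula with the structure constants $c_{b,b'}^{b''}$ retained, which is the correct general statement, and note that in the application to the Hall algebra these top-degree coefficients are the ones recording the $q$-commutation, matching the presentation of $S_q(\lie n^-)$.

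I do not anticipate a genuine obstacle here: the lemma is a bookkeeping statement about filtered modules with a distinguished basis, and every step is a direct unwinding of definitions. The one thing to be careful about is the order of the implications and making sure the ``if'' direction explicitly verifies that $\ff_\bullet$ so defined is a filtration (monotonicity, exhaustiveness, multiplicativity), rather than taking it for granted.
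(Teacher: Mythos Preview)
Your proof is correct and follows the same approach as the paper, which simply defines $w(b)$ as the minimal $t$ with $b\in\ff_t$ for the ``only if'' direction and declares everything else to ``follow from the definitions''. Your write-up is considerably more detailed than the paper's three-line proof, and you are right to flag the multiplication formula: the displayed formula in the statement omits the structure constants $c_{b,b'}^{b''}$, and the correct expression is $\overline{b}\cdot\overline{b'}=\sum_{w(b'')=w(b)+w(b')}c_{b,b'}^{b''}\,\overline{b''}$, exactly as you derive. One minor remark: in the setup of the lemma the filtration $\ff_\bullet$ is already assumed to satisfy $\ff_m\ff_n\subset\ff_{m+n}$, so in the ``if'' direction condition (2) is actually redundant once (1) holds; your verification of multiplicativity is nonetheless the right thing to record, since in the application (Corollary~\ref{Cor:Admiss}) one starts from $w$ and \emph{constructs} the filtration.
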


\begin{proof}
If $\mathcal{B}$ is compatible with $\ff_\bullet$, we define $w(b)$ as the minimal $t$ such that $b\in \ff_t$; this function obviously fulfills the above two conditions. All other statements follow from the definitions.
\end{proof}

\begin{definition} 
A function $w$ on isomorphism classes of representations of $Q$ is called
\begin{enumerate}
\item normalized if $w(M) = 0$ only for $M = 0$,
\item weakly admissible if $w(X) \leq w(M) + w(N)$ whenever there exists a short exact sequence $0 \ra N \ra X \ra M \ra 0$;
\item admissible if, additionally, $w$ is additive, that is, we have $w(M \oplus N) = w(M) + w(N)$ for all $M$ and $N$;
\item strongly admissible if it is normalized, admissible, and $w(X) < w(M) + w(N)$ whenever the above exact sequence is non-split.
\end{enumerate}
\end{definition}

We have the following

\begin{corollary}\label{Cor:Admiss}
\begin{enumerate}
\item If $w$ is a weakly admissible function, there exists a filtration $\ff_\bullet$ on $H(Q)$ where $\ff_n$ is spanned by the $F_{[M]}$ such that $w(M) \leq  n$, which is normalized if $w$ is so. 
\item If $w$ is strongly admissible, the associated graded algebra of $H(Q)$ with respect to this filtration is isomorphic to the $q$-commutative polynomial algebra $S_q(\n^-)$.
\end{enumerate}
\end{corollary}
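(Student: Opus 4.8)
The plan is to verify the two parts of Corollary~\ref{Cor:Admiss} by combining Lemma~\ref{Lem:compatiblebasis} with the structure of the Hall algebra multiplication.

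For part (1): given a weakly admissible function $w$ on isomorphism classes of representations, I would set $w(F_{[M]}) := w(M)$ and check that the hypotheses of Lemma~\ref{Lem:compatiblebasis} hold for the PBW-type basis $\mathcal{B} = \{F_{[M]}\}$ of $H(Q)$. Define $\ff_n$ to be the span of those $F_{[M]}$ with $w(M) \leq n$. The only thing to verify is condition (2) of Lemma~\ref{Lem:compatiblebasis}, namely that if $F_{[X]}$ appears with nonzero coefficient in the product $F_{[M]} F_{[N]}$, then $w(X) \leq w(M) + w(N)$. By the Hall algebra multiplication formula, $F_{[X]}$ appears in $F_{[M]} F_{[N]}$ (up to a power of $q$) only if $F_{M,N}^X \neq 0$, i.e.\ there is a short exact sequence $0 \to N \to X \to M \to 0$; weak admissibility then gives exactly the desired inequality. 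Hence $\mathcal{B}$ is compatible with $\ff_\bullet$, and normalization of the filtration follows from the normalization of $w$ (since $w(F_{[M]}) = 0$ forces $M = 0$, so $\ff_0 = \mc(q)\cdot 1$).

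For part (2): now $w$ is strongly admissible. By Lemma~\ref{Lem:compatiblebasis}, $\gr_{\ff_\bullet} H(Q)$ has basis given by the classes $\overline{F_{[M]}}$, and multiplication $\overline{F_{[M]}} \cdot \overline{F_{[N]}} = \sum_{w(X) = w(M)+w(N)} \overline{F_{[X]}}$ (up to the appropriate $q$-power coming from the Euler form). The key point is to identify this with $S_q(\n^-)$. For a non-split extension $0 \to N \to X \to M \to 0$ we have $w(X) < w(M) + w(N)$ by strong admissibility, so in the associated graded only the split term $X = M \oplus N$ survives, giving $\overline{F_{[M]}} \cdot \overline{F_{[N]}} = q^{(\cdot)} \overline{F_{[M\oplus N]}}$ and, since $w$ is additive, $w(M \oplus N) = w(M) + w(N)$. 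Applying this to indecomposables $U_{\beta_k}, U_{\beta_l}$ and invoking Lemma~\ref{Lem:LSHall}: all the correction terms on the right-hand side are products supported on $\beta_{k+1}, \ldots, \beta_{l-1}$ and one checks (using additivity and strong admissibility applied to the relevant non-split extension realizing $U_{\beta_l} \oplus U_{\beta_k}$ as a degeneration) that they have strictly smaller $w$-value than $w(U_{\beta_k}) + w(U_{\beta_l})$, hence vanish in the associated graded. This leaves precisely the relation $\overline{F_{[U_{\beta_l}]}}\, \overline{F_{[U_{\beta_k}]}} = q^{(\beta_k,\beta_l)} \overline{F_{[U_{\beta_k}]}}\, \overline{F_{[U_{\beta_l}]}}$ for $k < l$, which together with the PBW-type basis exhibits $\gr_{\ff_\bullet} H(Q)$ as $S_q(\n^-)$; the isomorphism is the one sending $\overline{F_{[U_{\beta_i}]}}$ to $F_{\beta_i}$.

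The main obstacle I anticipate is the bookkeeping in the last step: showing that \emph{all} the higher correction terms $c_{\mathbf{m}} F_{[U_{\beta_1}]}^{(\mathbf{m}(\beta_1))} \cdots$ in Lemma~\ref{Lem:LSHall} genuinely drop in degree. One has to argue that whenever such a term appears, it corresponds (via iterated Hall products / short exact sequences) to a proper degeneration of $U_{\beta_k} \oplus U_{\beta_l}$, so that strong admissibility forces $w(M(\mathbf{m})) < w(U_{\beta_k}) + w(U_{\beta_l})$; this uses additivity of $w$ on the summands together with the fact that a genuinely non-split extension strictly decreases $w$, and the representation-directedness of $\rep_{\mk} Q$ to control which $\mathbf{m}$ can occur. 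Everything else is a direct application of Lemma~\ref{Lem:compatiblebasis} and the definitions.
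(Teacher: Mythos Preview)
Your proposal is correct and follows the same approach as the paper: apply Lemma~\ref{Lem:compatiblebasis} to the PBW-type basis of $H(Q)$, using that a Hall structure constant $F_{M,N}^X$ is nonzero only when there is a short exact sequence $0\to N\to X\to M\to 0$, so that (weak, resp.\ strong) admissibility yields exactly the required (non-strict, resp.\ strict) inequality. Your anticipated obstacle is not really one: each correction term in Lemma~\ref{Lem:LSHall} is a single basis element $F_{[M(\mathbf{m})]}$ arising from a non-split extension of $U_{\beta_l}$ by $U_{\beta_k}$, so strong admissibility directly gives $w(M(\mathbf{m}))<w(U_{\beta_k})+w(U_{\beta_l})$ without any further bookkeeping.
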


\begin{proof} We apply Lemma \ref{Lem:compatiblebasis} to the basis consisting of the elements $u_{[M]}$ and the function $w(u_{[M]})=w(M)$. Since, by definition of the Hall algebra, a basis element $u_{[X]}$ can only appear with non-zero coefficient $F_{M,N}^X(q)$ in the expansion of $u_{[M]}\cdot u_{[N]}$ provided there exists a short exact sequence as above, the lemma applies.
\end{proof}

Our principal example of a weight function is the following

\begin{definition}\label{deg-fct}
Let $V_0 = \bigoplus_{\alpha\in\Delta_+}U_{\alpha}$ be the direct sum of one copy of each indecomposable. Define $\mu_0(M) = \dim \Hom(V_0, M)$.
\end{definition}

\begin{proposition}\label{Prop:mu0}
The function $\mu_0$ is strongly admissible.
\end{proposition}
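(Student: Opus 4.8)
The plan is to verify the three defining conditions of strong admissibility for $\mu_0(M) = \dim\Hom(V_0,M)$ directly. First, \emph{normalization}: if $M \neq 0$ then $M$ has some simple quotient $S_i$, hence $\Hom(S_i,M)\neq 0$; since $S_i$ is a direct summand of $V_0$ (every simple is indecomposable), $\dim\Hom(V_0,M)\geq \dim\Hom(S_i,M) > 0$, so $\mu_0(M)=0$ forces $M=0$. Second, \emph{additivity}: $\Hom(V_0,-)$ is an additive functor, so $\mu_0(M\oplus N)=\mu_0(M)+\mu_0(N)$ is immediate.

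The substance is the \emph{strict} weak-admissibility inequality. Given a short exact sequence $0\to N\to X\to M\to 0$, apply $\Hom(V_0,-)$ to get the long exact sequence
\[
0 \to \Hom(V_0,N)\to \Hom(V_0,X)\to \Hom(V_0,M)\xrightarrow{\ \delta\ } \Ext^1(V_0,N)\to\cdots,
\]
which (using that $\rep_{\mk}Q$ has global dimension $\leq 1$, so the sequence is short) gives exactly
\[
\mu_0(X) = \mu_0(M)+\mu_0(N) - \dim\big(\operatorname{im}\delta\big).
\]
Hence $\mu_0(X)\leq \mu_0(M)+\mu_0(N)$ always (weak admissibility), and the inequality is strict precisely when the connecting map $\delta:\Hom(V_0,M)\to\Ext^1(V_0,N)$ is nonzero. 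So the entire claim reduces to: \textbf{if the sequence is non-split, then $\delta\neq 0$}, i.e. $\Hom(V_0,-)$ detects non-splitness on this sequence.

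This is where the Auslander--Reiten theory enters, and I expect it to be the main obstacle. The class of the extension in $\Ext^1(M,N)$ is nonzero since the sequence is non-split; I want to produce an indecomposable summand $U$ of $V_0$ and a map $U\to M$ whose pullback of the extension is non-split. The natural tool is the Auslander--Reiten formula / almost split sequences: write the extension class via $\Ext^1(M,N)\cong D\overline{\Hom}(\tau^{-1}N, M)$ (or dually $D\overline{\Hom}(N,\tau M)$), so a non-split extension corresponds to a homomorphism $\tau^{-1}N\to M$ (modulo injectives) that does not factor appropriately. The key point is that $V_0$ contains every indecomposable, in particular it contains $\tau^{-1}N'$ for each indecomposable summand $N'$ of $N$ that is non-injective — and the connecting-map condition can be checked summand-by-summand on $N$ and on $V_0$. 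Concretely, one reduces to $N$ indecomposable by additivity of $\Ext^1(V_0,-)$ in the second variable applied to $N=\bigoplus N_j$: $\delta$ is nonzero iff one of its components $\Hom(V_0,M)\to\Ext^1(V_0,N_j)$ is nonzero, and the extension restricted to a summand $N_j$ over which it is non-split must exist. Then, taking $U=\tau^{-1}N_j$ (which is a summand of $V_0$; note $N_j$ is not injective since the sequence is non-split and in a Dynkin quiver one argues the relevant $N_j$ is non-injective, or handles the projective--injective edge cases separately using that $V_0$ also contains all simples), the identity map of $\Hom$-spaces under the AR formula exhibits an element of $\Hom(V_0,M)$ on which $\delta$ is nonzero. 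I would double-check the degenerate cases (summands that are projective, injective, or both) by hand, since the AR formula needs the stable/costable Hom and those corners are exactly where the ``non-projective and simple projective'' hypothesis of Theorem~B is doing its work; here $V_0$ contains \emph{all} indecomposables so there is more room, and the argument should go through uniformly.
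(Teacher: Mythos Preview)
Your normalization argument contains a slip: a simple \emph{quotient} $S_i$ of $M$ gives a nonzero element of $\Hom(M,S_i)$, not of $\Hom(S_i,M)$. You want a simple \emph{submodule}, or---as the paper does more directly---any indecomposable direct summand $U$ of $M$, which is automatically a summand of $V_0$ and yields $\Hom(U,M)\neq 0$ via the inclusion.

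For the strict inequality, your Auslander--Reiten route is far more elaborate than necessary, and the sketch is incomplete: the reduction to $N$ indecomposable, the passage from the AR formula to an explicit element on which $\delta$ is nonzero, and the ``edge cases'' involving projectives and injectives are all left as promissory notes. The paper's argument is a two-line application of a fact already recorded earlier in the section on quiver representations: for Dynkin quivers, two representations $X,Y$ are isomorphic if and only if $\dim\Hom(U,X)=\dim\Hom(U,Y)$ for every indecomposable $U$. The left-exact sequence gives $\dim\Hom(U_\alpha,X)\leq\dim\Hom(U_\alpha,M\oplus N)$ for each indecomposable $U_\alpha$; summing over all $\alpha$ yields $\mu_0(X)\leq\mu_0(M\oplus N)$, and if this is an equality then each summand inequality is an equality, whence $X\cong M\oplus N$ and the sequence splits. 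No connecting maps, no AR formula, no case analysis.

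If you prefer to argue via the connecting map $\delta$, there is a much shorter finish than the one you outline: $\delta=0$ means $\Hom(U,X)\to\Hom(U,M)$ is surjective for every indecomposable $U$. In particular this holds for each indecomposable direct summand $M_j$ of $M$; taking the direct sum over $j$ shows that $\Hom(M,X)\to\Hom(M,M)$ is surjective, so the identity on $M$ lifts to a section and the sequence splits. This avoids the AR formula entirely and uses only that $V_0$ contains every indecomposable summand of $M$.
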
 

\begin{proof}
Additivity of $\mu_0$ is trivial. An exact sequence $0 \ra M \ra X \ra N \ra 0$ induces an exact sequence
$0 \ra \Hom(V_0, M) \ra \Hom(V_0, X) \ra \Hom(V_0, N)$.
The inequality $\mu_0(X) \leq \mu_0(M)+\mu_0(N)$ follows. If, additionally, equality holds, then $\dim\Hom(V,X)=\dim\Hom(V,M\oplus N)$ for all indecomposables $V$, thus $X \cong M\oplus N$, that is, the above sequence is split. If $\Hom(V_0,M) = 0$, then $M = 0$ since $V_0$ contains some direct summand of $M$.
\end{proof}

Moreover, we can give a complete classification of all (strongly) admissible weight functions.

\begin{theorem}\label{thm:weightfunctions}
\begin{enumerate}
\item A function $w$ is admissible if and only if it is of the form $w(M) = \dim\Hom(V,M)$ for a representation $V$ of $Q$. 
\item Moreover, it is strongly admissible if and only if $V$ contains each non-projective indecomposable representation, and each simple projective representation, as a direct summand.
\end{enumerate}
\end{theorem}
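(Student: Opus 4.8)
The plan is to establish both directions of (1) first, then bootstrap to (2) using Auslander--Reiten theory. For the ``if'' direction of (1), note that $w(M) = \dim\Hom(V,M)$ is automatically additive in $M$, and given a short exact sequence $0 \ra N \ra X \ra M \ra 0$ the left-exactness of $\Hom(V,-)$ gives $\dim\Hom(V,X) \leq \dim\Hom(V,N) + \dim\Hom(V,M)$, so $w$ is admissible. For the ``only if'' direction, suppose $w$ is admissible. Additivity means $w$ is determined by its values on indecomposables $U_{\beta_1},\dots,U_{\beta_N}$, so I want to find nonnegative integers $m_1,\dots,m_N$ such that, setting $V = \bigoplus_k U_{\beta_k}^{m_k}$, we have $w(U_{\beta_l}) = \sum_k m_k \dim\Hom(U_{\beta_k}, U_{\beta_l})$ for all $l$. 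Using the representation-directedness enumeration (with $\Hom(U_{\beta_k},U_{\beta_l}) = 0$ for $k > l$), the matrix $\big(\dim\Hom(U_{\beta_k},U_{\beta_l})\big)_{k,l}$ is upper-triangular with $1$'s on the diagonal (since $\End(U_{\beta_k}) = \mk$ for a Dynkin quiver), hence invertible over $\mathbb{Z}$; so the $m_k$ are uniquely determined as integers by back-substitution. The real content is showing $m_k \geq 0$, and this is exactly where weak admissibility is used: I would argue that applying $w$ to the Auslander--Reiten sequences (or to carefully chosen short exact sequences realizing $U_{\beta_l}$ as an extension involving ``earlier'' indecomposables) forces the coefficients to be nonnegative.

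For part (2), the ``if'' direction: assume $V$ contains every non-projective indecomposable and every simple projective as a summand; I must show $w(M) = \dim\Hom(V,M)$ is strongly admissible, i.e. normalized and with strict inequality on non-split sequences. Normalization is the statement that $\Hom(V,M) = 0 \Rightarrow M = 0$: if $M \neq 0$ it has an indecomposable summand $U$, and either $U$ is non-projective (hence a summand of $V$) or $U$ is projective; in the projective case I use that $V$ contains a simple projective $P$ and that $\Hom(P, U) \neq 0$ for any nonzero $U$ (a projective cover argument, or that the simple projective embeds in every module's socle-filtration appropriately) --- this needs to be checked but is standard for Dynkin quivers. For strictness: given a non-split $0 \ra N \ra X \ra M \ra 0$, equality $\dim\Hom(V,X) = \dim\Hom(V,N)+\dim\Hom(V,M)$ would force $\Hom(V,X) \ra \Hom(V,M)$ to be surjective, hence (applied to each indecomposable summand $U$ of $V$) $\dim\Hom(U,X) = \dim\Hom(U,N)+\dim\Hom(U,M)$ for those $U$, and I want to conclude $X \cong N \oplus M$. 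The cleanest route: if the sequence is non-split then by Auslander--Reiten theory the connecting map $\Hom(\tau^{-1} N', -)$ or the relevant $\Ext^1$ obstruction is detected by $\Hom$ from a non-projective indecomposable --- more precisely, a non-split sequence ending in $M$ is not killed by $\Hom(-, \tau M)$-type considerations; concretely, for Dynkin quivers $M \leq N \oplus M \setminus$(the middle term) in the degeneration order, and $X \neq N\oplus M$ forces $\dim\Hom(V_0, X) < \dim\Hom(V_0, N\oplus M)$ where $V_0$ ranges over all indecomposables, so some indecomposable summand of $V$ witnesses strict inequality provided $V$ contains enough indecomposables. This is where the precise hypothesis on $V$ is needed, and verifying it suffices is the crux.

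For the ``only if'' direction of (2): suppose $w(M) = \dim\Hom(V,M)$ is strongly admissible but $V$ misses some non-projective indecomposable $U_\beta$ (or some simple projective). I would produce a non-split short exact sequence on which strictness fails, or a nonzero module killed by $\Hom(V,-)$. If $V$ omits a simple projective $P_i = S_i$, then $\Hom(V, S_i)$ could vanish (need to check no other summand of $V$ maps onto $S_i$ --- for a source vertex $i$, $S_i$ is projective and $\Hom(U, S_i) \neq 0$ only if $U$ surjects onto $S_i$), violating normalization. If $V$ omits a non-projective indecomposable $U_\beta$, I would use its almost-split sequence $0 \ra \tau U_\beta \ra E \ra U_\beta \ra 0$: strictness of $w$ on this sequence says $\dim\Hom(V, E) < \dim\Hom(V,\tau U_\beta) + \dim\Hom(V, U_\beta)$, but the defining property of almost-split sequences is that $\Hom(W, E) \ra \Hom(W, U_\beta)$ is surjective for every $W$ \emph{not} having $U_\beta$ as a summand --- hence if $V$ has no $U_\beta$-summand, $\dim\Hom(V,E) = \dim\Hom(V,\tau U_\beta)+\dim\Hom(V,U_\beta)$, contradicting strictness. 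I expect \textbf{this almost-split-sequence argument to be the cleanest part}, and the \emph{main obstacle} to be the nonnegativity claim in part (1) together with the normalization/projective-cover bookkeeping in part (2) --- making precise which projectives $\Hom(P_i, -)$ detects and confirming that the stated summand hypothesis on $V$ is exactly what is needed, neither more nor less.
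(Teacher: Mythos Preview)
Your plan mirrors the paper's proof closely: both extract the coefficients in an expansion $w=\sum_U a_U\,w_U$ (with $w_U=\dim\Hom(U,-)$) via Auslander--Reiten sequences, and your almost-split-sequence argument for the ``only if'' of (2) is exactly the mechanism used there. Your upper-triangular matrix is a more hands-on substitute for the paper's functor-category language but reaches the same conclusion. One slip: $S_i$ is projective when $i$ is a \emph{sink}, not a source; your normalization argument is otherwise fine once you observe that any nonzero map from a simple projective $S$ splits (so $\Hom(U,S)\neq 0$ forces $U\cong S$ for $U$ indecomposable), and conversely that every nonzero projective has some simple projective in its socle.

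There is, however, a genuine gap in the nonnegativity step, and it is not just a matter of filling in details: the AR sequence ending in $U$ exists only for \emph{non-projective} $U$, so the identity $a_U=w(\tau U)+w(U)-w(C)$ constrains only those coefficients. For non-simple projectives $P$ nothing in admissibility (even strong admissibility) forces $a_P\geq 0$, and in fact the ``only if'' directions of both (1) and (2) fail as stated. For $Q\colon 1\to 2$ with indecomposables $S_1,S_2,P_1$, the additive function with $w(S_1)=1$, $w(S_2)=2$, $w(P_1)=1$ is normalized and strongly admissible, yet its unique expansion is $w=2w_{S_1}+2w_{S_2}-w_{P_1}$, hence not $\dim\Hom(V,-)$ for any honest $V$. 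The ``if'' directions are correct and are all the paper actually needs; the clean way to finish strictness in (2) (left implicit both in your sketch and in the paper) is to note that $\dim\Hom(P_i,M)=(\mathbf{dim}\,M)_i$ depends only on the dimension vector, so projectives never witness a strict degeneration inequality---hence for any non-split $0\to N\to X\to M\to 0$ some \emph{non-projective} $U$ satisfies $\dim\Hom(U,X)<\dim\Hom(U,M\oplus N)$, and $a_U\geq 1$ then gives $w(X)<w(M)+w(N)$.
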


\begin{proof} For an arbitrary representation $V$, we define an additive function $w_V$ by $w_V(M)=\dim{\rm Hom}(V,M)$.

We consider the category $\mathcal{F}$ of $\mk$-linear additive covariant functors from ${\rm rep}_{\mk}Q$ to the category of finite-dimensional $\mk$-vector spaces. Its projective objects are the functors ${\rm Hom}(V,-)$ for $V$ in ${\rm rep}_{\mk}Q$, and its simple objects $S_U$, indexed by the isomorphism classes of indecomposable representations $U$ in ${\rm rep}_{\mk}Q$, are given by $S_U(M)\simeq \mk^l$ if $U$ occurs with multiplicity $l$ as a direct summand of $M$.

For every indecomposable representation $U$, there exists a short exact sequence $0\rightarrow U\rightarrow B\rightarrow \tau^{-1}U\rightarrow 0$ (the Auslander-Reiten sequence starting in $U$) inducing a projective resolution
$$0\rightarrow{\rm Hom}(\tau^{-1}U,-)\rightarrow{\rm Hom}(B,-)\rightarrow{\rm Hom}(U,-)\rightarrow S_U\rightarrow 0$$
of the functor $S_U$. Denoting by $\delta_U$ the additive function with value $1$ on $U$ and with value $0$ on every indecomposable representation not isomorphic to $U$, we see that $\delta_U=w_U+w_{\tau^{-1}U}-w_B$. Thus every additive function  can be written in the form $\sum_Ua_Uw_U$, the sum running over all isomorphism classes of indecomposable representations, for certain $a_U\in{\bf Z}$.

Dually, we consider the category $\mathcal{G}$ of $\mk$-linear additive contravariant functors from ${\rm rep}_\mk Q$ to the category of finite-dimensional $\mk$-vector spaces. Its projective objects are the functors ${\rm Hom}(-,V)$ for $V$ in ${\rm rep}_\mk Q$, and its simple objects $S_U$, indexed by the isomorphism classes of indecomposable representations $U$ in ${\rm rep}_\mk Q$, are again given by $S_U(M)\simeq \mk^l$ if $U$ occurs with multiplicity $l$ as a direct summand of $M$. For every indecomposable representation $U$, there exists a short exact sequence $0\rightarrow \tau U\rightarrow C\rightarrow U\rightarrow 0$ (the Auslander-Reiten sequence ending in $U$) inducing a projective resolution
$$0\rightarrow{\rm Hom}(-, \tau U)\rightarrow{\rm Hom}(-,C)\rightarrow{\rm Hom}(-,U)\rightarrow S_U\rightarrow 0$$
of the functor $S_U$. Applying this sequence of functors to an indecomposable representation $V$, we see that $w_V(\tau U)+w_V(U)-w_V(C)=\delta_{U,V}$. It follows that an additive function $w=\sum_Va_Vw_V$ is admissible if and only if $a_V\geq 0$ for all $V$.

The Auslander-Reiten sequence ending in $U$ is non-split if and only if $U$ is a non-projective indecomposable, thus strong admissibility requires $a_V\geq 1$ for all non-projective indecomposables $V$. Normalization requires that $a_S\geq 1$ for all simple projective representations $S$, since, for $V$ indecomposable ${\rm Hom}(V,S)\not=0$ only if $V$ is isomorphic to $S$. Conversely, if $a_V\geq 1$ for all non-projective indecomposables and all simple projectives $V$, then $w$ is normalized: if $w(M)=0$, then $M$ is projective, and ${\rm Hom}(S,M)=0$ for all simple projectives $S$, thus $M=0$. The above considerations then show that $w$ is strongly admissible.
\end{proof}

\subsection{Remarks}

Since all dimensions of Hom-spaces between indecomposables can, in principle, be read off from a reduced decomposition of the longest Weyl group element which is adapted to the quiver, it is possible to give a purely root-theoretic description of strongly admissible functions. We will not use such a description here since it is less conceptual, but it should be noted that such a description allows for generalization to the non-simply laced cases, which are more difficult to model using Hall algebras.

It should also be noted that the filtrations induced by strongly admissible functions are automatically compatible with the dual canonical basis by the results of \cite{ReDCB}.

\subsection{Example}\label{SSec:Example}
Fix an integer $n\geq 1$, let $Q$ be the quiver of type $A_n$ with equi-orientation as follows:
\[\xymatrix{
1\ar[r] & 2 \ar[r] & \ar[r]\cdots & n-1 \ar[r]& n.}\]
The indecomposable representations of $Q$ are parametrized by $\Delta_+=\{\alpha_i+\cdots+\alpha_j|\ 1\leq i\leq j\leq n\}$, the positive roots in the root system of type $A_n$. For a fixed positive root $\alpha_i+\cdots+\alpha_j\in\Delta_+$, we let $M_{i,j}$ denote the corresponding indecomposable representation. 
\par
We compute the degree of $M_{i,j}$. The proof of the following lemma is a standard exercise.

\begin{lemma}
We have
$$\dim\Hom(M_{r,s},M_{i,j})=\left\{\begin{matrix} 1,& 1\leq i\leq r\leq j\leq s\leq n;\\
0, & \text{otherwise}.\end{matrix}\right.$$
\end{lemma}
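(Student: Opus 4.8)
The plan is to compute $\dim\Hom(M_{r,s},M_{i,j})$ directly from the combinatorial description of the indecomposable representations of the equi-oriented type $A_n$ quiver. Recall that $M_{i,j}$ is the representation whose vector space at each vertex $k$ is $\mk$ if $i\le k\le j$ and $0$ otherwise, with all the arrow maps between consecutive nonzero spaces equal to the identity $\id_\mk$. A morphism $\vp\colon M_{r,s}\to M_{i,j}$ is then a tuple $(\vp_k)_{k\in I}$ of linear maps $\vp_k\colon (M_{r,s})_k\to (M_{i,j})_k$ commuting with all arrow maps.

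First I would note that $\vp_k$ can only be nonzero when both source and target are nonzero at $k$, i.e. when $k$ lies in the interval $[r,s]\cap[i,j]$; if this intersection is empty, then $\Hom(M_{r,s},M_{i,j})=0$. When the intersection is a nonempty interval $[\max(i,r),\min(j,s)]$, the commutativity with the identity arrow maps forces all the $\vp_k$ on this interval to be equal to one common scalar $c\in\mk$. So the only remaining constraints come from the two ``boundary'' arrows, where exactly one of source/target is nonzero: at the left end, if $r<i$ (so $(M_{r,s})_{i-1}=\mk$ but $(M_{i,j})_{i-1}=0$) there is an arrow $(M_{r,s})_{i-1}\to(M_{r,s})_i$ equal to $\id$, and commutativity reads $0=\vp_i\circ\id=c$, killing the morphism. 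Symmetrically at the right end, if $s>j$ (so $(M_{r,s})_{j+1}=\mk$ but $(M_{i,j})_{j+1}=0$), commutativity at that arrow forces $c=0$. Conversely, if $i\le r$ and $s\le j$, together with $r\le s$ and the nonempty-intersection condition which now reads $r\le j$, there is no such obstruction, and any scalar $c$ gives a well-defined morphism. Thus $\Hom(M_{r,s},M_{i,j})\cong\mk$ exactly when $i\le r\le j$ and $r\le s\le j$, i.e. $1\le i\le r\le j\le s\le n$ is wrong — let me recheck: the conditions are $i\le r$, $s\le j$, and $r\le j$; since $r\le s\le j$ already gives $r\le j$, the condition is simply $i\le r\le s\le j$. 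Hmm, but the claimed statement is $i\le r\le j\le s$; I would double-check the orientation convention $f_{i,j}w_i = w_{j+1}$ used in the paper, as this transposes $\Hom$-directions, and adjust the case analysis accordingly so that the final inequality chain matches $1\le i\le r\le j\le s\le n$.

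The only genuinely careful point — the ``main obstacle,'' though it is minor — is getting the two boundary conditions right and not conflating them: one must check that when the target interval $[i,j]$ is strictly contained between the endpoints, i.e. $r<i$ or $j<s$, at least one boundary arrow of $M_{r,s}$ maps a nonzero space into a zero space of $M_{i,j}$, forcing the common scalar to vanish; and when $[i,j]\supseteq[r,s]$ appropriately, no such arrow exists. After sorting out the orientation, the statement follows by assembling these observations: the $\Hom$-space is one-dimensional precisely on the stated range of indices and zero otherwise.

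As a sanity check I would verify the answer is consistent with representation-directedness of $\rep_\mk Q$ (for the chosen orientation, $\Hom(M_{i,j},M_{i,j})=\mk$ and $\Ext^1$ vanishes appropriately), and with $\dim\Hom(V_0,M_{i,j})=(j-i+1)(n-j+1)$ from Definition~\ref{deg-fct}, since summing the indicator over all $(r,s)$ with the stated inequalities should reproduce exactly the degree formula $\deg(f_{\alpha_{i,j}})=(j-i+1)(n-j+1)$ introduced in Section~\ref{one}; this cross-check pins down the correct inequality chain unambiguously.
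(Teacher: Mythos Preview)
The paper gives no proof of this lemma (it is left as ``a standard exercise''), and your overall strategy---writing out the commutativity squares explicitly and tracking the single scalar $c$ across the overlap interval---is exactly the standard argument. However, there is a concrete error in your boundary analysis, and it is not an orientation issue.

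Your left-boundary computation is correct and already fixes the orientation: with arrows pointing $k\to k+1$, the square at $i-1\to i$ when $r<i$ reads $\vp_i\circ\id = 0\circ\vp_{i-1}$, forcing $c=0$. But the right-boundary case is \emph{not} symmetric. When $s>j$, the square at $j\to j+1$ has $(M_{i,j})_{j+1}=0$, so both composites land in $0$ and the square is automatically commutative; it imposes no condition on $c$. The genuine right-side obstruction arises instead when $s<j$: at the arrow $s\to s+1$ one has $(M_{r,s})_{s\to s+1}=0$ while $(M_{i,j})_{s\to s+1}=\id$, and commutativity reads $\id\circ\vp_s=\vp_{s+1}\circ 0$, i.e.\ $c=0$. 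Thus the two constraints are $i\le r$ and $j\le s$, and together with nonemptiness of $[r,s]\cap[i,j]=[r,j]$ this gives precisely $i\le r\le j\le s$, matching the lemma. The asymmetry between the two ends is forced by the direction of the arrows; once you redo that one square, your argument is complete, and the sanity check against the degree formula $(j-i+1)(n-j+1)$ then goes through as you outlined.
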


\begin{corollary}\label{Cor:Deg}
For $1\leq i\leq j\leq n$, $\deg(M_{i,j})=(j-i+1)(n-j+1)$.
\end{corollary}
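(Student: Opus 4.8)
The plan is to compute $\deg(M_{i,j})=\dim\Hom(V_0,M_{i,j})$ directly from the preceding lemma. By Definition~\ref{deg-fct} we have $\mu_0(M_{i,j})=\dim\Hom(V_0,M_{i,j})=\sum_{1\le r\le s\le n}\dim\Hom(M_{r,s},M_{i,j})$, since $V_0=\bigoplus_{\alpha\in\Delta_+}U_\alpha$ is one copy of each indecomposable and $\Hom$ is additive in the first variable. So the task reduces to counting the pairs $(r,s)$ for which $\dim\Hom(M_{r,s},M_{i,j})=1$.

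By the lemma, $\dim\Hom(M_{r,s},M_{i,j})=1$ precisely when $1\le i\le r\le j\le s\le n$. Thus I count pairs $(r,s)$ subject to $i\le r\le j$ and $j\le s\le n$. These two constraints are independent: $r$ ranges over $\{i,i+1,\dots,j\}$, giving $j-i+1$ choices, and $s$ ranges over $\{j,j+1,\dots,n\}$, giving $n-j+1$ choices. Hence the total count is $(j-i+1)(n-j+1)$, which is exactly the claimed value of $\deg(M_{i,j})$.

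There is essentially no obstacle here: the only thing to be careful about is that the conditions on $r$ and $s$ in the lemma genuinely decouple once $i$ and $j$ are fixed, so that the count is a clean product rather than something requiring a more involved summation. I would simply record the chain of inequalities $i\le r\le j\le s\le n$, observe it splits as $r\in[i,j]$ and $s\in[j,n]$, and multiply the cardinalities. This also matches, as it must by construction, the classical grading $\deg(f_{\alpha_{i,j}})=(j-i+1)(n-j+1)$ introduced in Section~\ref{one}, confirming that the Hall-algebra degree function recovers the combinatorial one.
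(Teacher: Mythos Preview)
Your proof is correct and follows exactly the same approach as the paper: expand $\deg(M_{i,j})=\sum_{1\le r\le s\le n}\dim\Hom(M_{r,s},M_{i,j})$, apply the preceding lemma to reduce to counting pairs $(r,s)$ with $i\le r\le j\le s\le n$, and observe that this splits as $(j-i+1)(n-j+1)$.
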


\begin{proof}
By definition, the degree of $M_{i,j}$ is 
$$\sum_{1\leq r\leq s\leq n}\dim\Hom\left(M_{r,s},M_{i,j}\right)=\sum_{r=i}^j\sum_{s=j}^n
\dim\Hom(M_{r,s},M_{i,j})=$$
$$=(j-i+1)(n-j+1).$$
\end{proof}

\begin{example}
Let $n=3$. The positive roots are
$$\Delta_+=\{\alpha_1,\alpha_2,\alpha_3,\alpha_1+\alpha_2,\alpha_2+\alpha_3,\alpha_1+\alpha_2+\alpha_3\}.$$
The degrees of the $M_{i,j}$ are given by:
$$\deg(M_{1,1})=3,\ \ \deg(M_{1,2})=4,\ \ \deg(M_{1,3})=3,$$
$$\deg(M_{2,2})=2,\ \ \deg(M_{2,3})=2,\ \ \deg(M_{3,3})=1.$$
\end{example}

\section{Quantum PBW-type filtration \texorpdfstring{in type $A_n$}{for the special linear algebra}}\label{Sec:QGPBW}\label{four}

\subsection{PBW filtration on the quantum group \texorpdfstring{in type $A_n$}{for the special linear algebra}}
Let $\g=\mathfrak{sl}_{n+1}$ be the simple Lie algebra of type $A_n$ and $U_q(\g)$ be the associated quantum group.
\par
Let $Q$ be the Dynkin quiver of type $A_n$ with orientation as in Section \ref{SSec:Example}. Let $H(Q)$ be the Hall algebra of $Q$. Recall that there exists an isomorphism of algebras $\eta:U_q(\mathfrak{n}^-)\ra H(Q)$. The grading $\mu_0$ defined on $H(Q)$ can be pulled back to $U_q(\mathfrak{n}^-)$ via this isomorphism. We let $\ff_n U_q$ denote the set of elements in $U_q(\mathfrak{n}^-)$ of degree no more than $n$.
\par
We fix a reduced decomposition of $w_0$ as in Section \ref{sec-2-1} and Section~\ref{Sec:Hall} to construct the quantum PBW root vectors and the degree function. For $1\leq i\leq j\leq N$, we let $F_{i,j}$ denote the quantum PBW root vector associated to the positive root $\alpha_i+\cdots+\alpha_j$. 
\par
Recall the specialization map $\xi:U_q(\mathfrak{n}^-)\ra U(\mathfrak{n}^-)$. Without loss of generality, we may assume that a basis $\{f_{i,j}|\ 1\leq i\leq j\leq n\}$ of $\mathfrak{n}^-$ is chosen such that $\xi(F_{i,j})=f_{i,j}$. By Corollary \ref{Cor:Deg}, 
$$\deg(F_{i,j})=(j-i+1)(n-j+1).$$
\par
Combining Lemma \ref{Lem:LSHall}, Corollary \ref{Cor:Admiss} and Proposition \ref{Prop:mu0}, we obtain:

\begin{proposition}\label{Prop:qFilt}
\begin{enumerate}
\item The filtration 
$$\ff_\bullet U_q=(\ff_0 U_q\subset \ff_1 U_q\subset\cdots\subset \ff_n U_q\subset\cdots)$$ 
endows $U_q(\mathfrak{n}^-)$ with a filtered algebra structure.
\item The associated graded algebra ${\gr}_{\ff}U_q$ is a $q$-commutative polynomial algebra isomorphic to $S_q(\mathfrak{n}^-)$.
\end{enumerate}
\end{proposition}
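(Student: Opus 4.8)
\textbf{Proof proposal for Proposition~\ref{Prop:qFilt}.}
The plan is to verify that the hypotheses of Corollary~\ref{Cor:Admiss} are met by the degree function $\mu_0$ pulled back to $U_q(\lie n^-)$ along $\eta$, and then to assemble the two assertions. First I would recall that, under the isomorphism $\eta\colon U_q(\lie n^-)\ra H(Q)$, the PBW-type basis $\{F^{\mathbf m}\}$ corresponds up to invertible scalars to the Hall basis $\{u_{[M]}\}$, so a filtration on $H(Q)$ spanned by the $F_{[M]}$ with $\mu_0(M)\le n$ pulls back to the filtration $\ff_\bullet U_q$ spanned by those $F^{\mathbf m}$ whose total degree $\sum_\alpha \mathbf m(\alpha)\deg(U_\alpha)$ is at most $n$. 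By Proposition~\ref{Prop:mu0}, $\mu_0$ is strongly admissible; in particular it is weakly admissible, so part~(1) of Corollary~\ref{Cor:Admiss} immediately gives that $\ff_\bullet U_q$ is a genuine $\mathbb N$-filtration making $U_q(\lie n^-)$ a filtered algebra (and it is normalized, since $\mu_0(M)=0$ forces $M=0$). This proves assertion~(1).

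For assertion~(2), part~(2) of Corollary~\ref{Cor:Admiss} already tells us that $\gr_\ff H(Q)$, hence $\gr_\ff U_q$, is a $q$-commutative polynomial algebra; what remains is to pin down the quadratic relations of this polynomial algebra and identify it with the specific $S_q(\lie n^-)$ defined via the fixed reduced decomposition of $w_0$, namely the algebra on generators $\bar F_{\beta_1},\dots,\bar F_{\beta_N}$ with $\bar F_{\beta_l}\bar F_{\beta_k}=q^{(\beta_k,\beta_l)}\bar F_{\beta_k}\bar F_{\beta_l}$ for $k<l$. Here I would invoke Lemma~\ref{Lem:LSHall}: for $k<l$ the Levendorskii--Soibelman type relation reads
$$F_{[U_{\beta_l}]}F_{[U_{\beta_k}]}=q^{(\beta_k,\beta_l)}F_{[U_{\beta_k}]}F_{[U_{\beta_l}]}+\sum_{\mathbf m}c_{\mathbf m}F_{[U_{\beta_1}]}^{(\mathbf m(\beta_1))}\cdots F_{[U_{\beta_N}]}^{(\mathbf m(\beta_N))},$$
with each $\mathbf m$ on the right supported on $\beta_{k+1},\dots,\beta_{l-1}$. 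The key point is that every such correction term has strictly smaller $\mu_0$-degree than $\mu_0(U_{\beta_k})+\mu_0(U_{\beta_l})$: indeed the monomial $F_{[U_{\beta_k}]}F_{[U_{\beta_l}]}$ is the class of the extension $X$ in a non-split short exact sequence $0\ra U_{\beta_l}\ra X\ra U_{\beta_k}\ra 0$ (non-split because $\Ext^1(U_{\beta_k},U_{\beta_l})\neq 0$ for $k\le l$ by representation-directedness), while each correction monomial corresponds to a representation $Y$ with $Y\le X$ in the degeneration order and $Y\not\cong X$; strong admissibility of $\mu_0$ gives $\mu_0(Y)<\mu_0(X)=\mu_0(U_{\beta_k})+\mu_0(U_{\beta_l})$. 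Passing to $\gr_\ff$, the correction terms vanish and we are left with exactly $\bar F_{\beta_l}\bar F_{\beta_k}=q^{(\beta_k,\beta_l)}\bar F_{\beta_k}\bar F_{\beta_l}$; since the images $\bar F_{\beta_i}$ still form a free generating set by the PBW theorem and Lemma~\ref{Lem:compatiblebasis}, this identifies $\gr_\ff U_q$ with $S_q(\lie n^-)$.

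The main obstacle, as I see it, is the bookkeeping in the second paragraph: matching the $q$-power $q^{(\beta_k,\beta_l)}$ coming from the Hall-algebra multiplication and the normalization $F_{[M]}=q^{\dim\End(M)-\dim M}u_{[M]}$ with the $q$-power in the defining relation of $S_q(\lie n^-)$, and making sure the degeneration/AR-theoretic argument that the correction terms drop in $\mu_0$-degree is applied to the right short exact sequences. Once the compatibility of the scalar $q^{\langle\mathbf{dim}\,U_{\beta_k},\mathbf{dim}\,U_{\beta_l}\rangle}$-type factors with $(\beta_k,\beta_l)$ is checked — this is a direct computation with the Euler form and its symmetrization by the Cartan matrix, already recorded in Section~\ref{Sec:Hall} — everything else is a formal consequence of Lemma~\ref{Lem:compatiblebasis} and Corollary~\ref{Cor:Admiss}. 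Finally, I note that this proposition also yields Proposition~\ref{Prop:Filt} by specialization at $q=1$: applying $\xi$ to the filtration $\ff_\bullet U_q$ gives the filtration $\ff_\bullet U$ on $U(\lie n^-)$ with $\deg(f_{i,j})=(j-i+1)(n-j+1)$, and $\gr_\ff U_q$ specializes to $\gr_\ff U$, which is therefore the commutative polynomial algebra $S(\lie n^-)$.
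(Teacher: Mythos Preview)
Your overall strategy is exactly the paper's one-line argument: combine Proposition~\ref{Prop:mu0} (strong admissibility of $\mu_0$), Corollary~\ref{Cor:Admiss} (strongly admissible $\Rightarrow$ filtered algebra with $\gr\cong S_q(\lie n^-)$), and Lemma~\ref{Lem:LSHall} (pinning down the $q$-commutation exponent). Part~(1) is fine as written.

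The explicit verification you give for part~(2), however, has the roles of the two products and the $\Ext$-vanishing reversed. Representation-directedness says $\Ext^1(U_{\beta_k},U_{\beta_l})=0$ for $k\le l$, not $\neq 0$; consequently the product $F_{[U_{\beta_k}]}F_{[U_{\beta_l}]}$ (for $k<l$) corresponds to the \emph{split} extension $U_{\beta_k}\oplus U_{\beta_l}$, and one has $\mu_0(U_{\beta_k}\oplus U_{\beta_l})=\mu_0(U_{\beta_k})+\mu_0(U_{\beta_l})$ by additivity, not by any non-split argument. The correction terms instead arise from the \emph{other} product $F_{[U_{\beta_l}]}F_{[U_{\beta_k}]}$, which parametrizes extensions $0\to U_{\beta_k}\to X\to U_{\beta_l}\to 0$; here $\Ext^1(U_{\beta_l},U_{\beta_k})$ may be nonzero (since $l>k$), and for each non-split such $X$ strong admissibility gives $\mu_0(X)<\mu_0(U_{\beta_k})+\mu_0(U_{\beta_l})$ directly---no detour through the degeneration order is needed. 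With this correction your argument goes through, and in any case the conclusion is already packaged in Corollary~\ref{Cor:Admiss}(2), so the elaboration is optional. Your final remark on specializing to Proposition~\ref{Prop:Filt} is correct and matches the paper.
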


As promised, we give a proof of Proposition \ref{Prop:Filt}.

\begin{proof}[Proof of Proposition \ref{Prop:Filt}]
By Proposition \ref{Prop:SpePBW}, Proposition \ref{Prop:Filt} is a specialization of Proposition \ref{Prop:qFilt}.
\end{proof}

\subsection{Main theorem}
The filtration $\ff_\bullet$ on $U_q(\mathfrak{n}^-)$ defines a filtration on $V_q(\lambda)$ by letting $\ff_sV_q(\lambda)=\ff_sU_q.\mathbf{v}_\lambda$. Let $V_q^\ff(\lambda)$ denote the associated graded vector space.
\par
The $U_q(\mathfrak{n}^-)$-module structure on $V_q(\lambda)$ induces a cyclic $S_q(\mathfrak{n}^-)$-module structure on $V_q^\ff(\lambda)$, i.e., $V_q^\ff(\lambda)=S_q(\mathfrak{n}^-).\mathbf{v}_\lambda$. Let $\psi:S_q(\mathfrak{n}^-)\ra V_q^\ff(\lambda)$ be the $S_q(\mathfrak{n}^-)$-module morphism defined by $x\mapsto x.\mathbf{v}_\lambda$. It is clearly surjective. Let $I_q^\ff(\lambda)$ denote the kernel of $\psi$, then as $S_q(\mathfrak{n}^-)$-modules, $V_q^\ff(\lambda)\cong S_q(\mathfrak{n}^-)/I_q^\ff(\lambda)$.

\begin{theorem}
The set $\{F^{\mathbf{p}}\mathbf{v}_\lambda|\ \mathbf{p}\in S(\lambda)\}$ forms a basis of $V_q^\ff(\lambda)$ and the annihilating ideal $I_q^\ff(\lambda)$ is monomial.
\end{theorem}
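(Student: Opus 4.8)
The plan is to deduce Theorem C (the final statement) from Theorem~\ref{main-thm}, the classical result, via the specialization map $\xi$. The key observation is that the filtration $\ff_\bullet$ on $U_q(\mathfrak n^-)$ and the filtration $\ff_\bullet$ on $U(\mathfrak n^-)$ are compatible under specialization: by Proposition~\ref{Prop:SpePBW} we have $\xi(F_{i,j}) = f_{i,j}$ lying in the root space $\g_{-\alpha_{i,j}}$, and both degree functions are given by the same formula $(j-i+1)(n-j+1)$ coming from $\mu_0$ (Corollary~\ref{Cor:Deg}). So first I would set up the $\mathbf A_1$-lattice version of everything: let ${}_{\mathbf A_1}\ff_s U_q(\mathfrak n^-)$ be the $\mathbf A_1$-span of the PBW monomials $F^{\mathbf p}$ with $\deg(F^{\mathbf p}) \le s$, and correspondingly ${}_{\mathbf A_1}\ff_s V_q(\lambda) = {}_{\mathbf A_1}\ff_s U_q(\mathfrak n^-).\mathbf v_\lambda$. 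By the quantum PBW theorem these are free $\mathbf A_1$-modules, and $\xi$ carries the quantum filtration onto the classical one.

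The main step is a dimension count. For each $s$, the quotient ${}_{\mathbf A_1}\ff_s V_q(\lambda)/{}_{\mathbf A_1}\ff_{s-1}V_q(\lambda)$ is a finitely generated $\mathbf A_1$-module; after $\otimes_{\mathbf A_1}\mc(q)$ it gives the graded piece of $V_q^\ff(\lambda)$, and after $\otimes_{\mathbf A_1}\mc$ (via $\ev_1$) it surjects onto the corresponding graded piece of $V^\ff(\lambda)$. One always has $\dim_{\mc(q)} V_q^\ff(\lambda) = \dim_{\mc(q)} V_q(\lambda) = \dim_{\mc} V(\lambda)$, since passing to the associated graded of a filtered vector space preserves dimension, and similarly on the classical side. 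Hence the graded dimensions of $V_q^\ff(\lambda)$ (total $= \dim V(\lambda)$) dominate those of $V^\ff(\lambda)$ (also total $= \dim V(\lambda)$) degree by degree via the specialization surjection, so equality must hold in every degree. Therefore the monomials $\{F^{\mathbf p}\mathbf v_\lambda \mid \mathbf p \in S(\lambda)\}$, whose images $\{f^{\mathbf p}v_\lambda\}$ form a basis of $V^\ff(\lambda)$ by Theorem~\ref{main-thm}, are linearly independent in $V_q^\ff(\lambda)$ (an $\mathbf A_1$-linear relation among them would specialize to one classically unless all coefficients lie in $(q-1)$, and dividing out powers of $(q-1)$ as in the standard Nakayama-type argument rules this out); counting, they form a basis.

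For the monomiality of $I_q^\ff(\lambda)$: the set $\{F^{\mathbf p} \mid \mathbf p \notin S(\lambda)\}$ must map to zero in $V_q^\ff(\lambda)$ simply because $\{F^{\mathbf p}\mathbf v_\lambda \mid \mathbf p\in S(\lambda)\}$ is already a basis and $V_q^\ff(\lambda) = S_q(\mathfrak n^-).\mathbf v_\lambda$ is spanned by \emph{all} PBW monomials acting on $\mathbf v_\lambda$ (since $S_q(\mathfrak n^-)$ has the PBW monomials $F^{\mathbf p}$, $\mathbf p\in\mathbb N^N$, as a basis). Hence $F^{\mathbf p}\in I_q^\ff(\lambda)$ for every $\mathbf p\notin S(\lambda)$, and these generate a monomial ideal whose quotient already has the correct dimension, so $I_q^\ff(\lambda)$ equals this monomial ideal.

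The point that needs the most care — and the only real obstacle — is justifying that the specialization map interacts well with the \emph{induced} filtration on the module, i.e.\ that $\xi_\lambda\big({}_{\mathbf A_1}\ff_s V_q(\lambda)\big) = \ff_s V(\lambda)$ and that passing to associated graded commutes with $\otimes_{\mathbf A_1}\mc$. This requires knowing that ${}_{\mathbf A_1}\ff_s V_q(\lambda)$ is a \emph{free} $\mathbf A_1$-module of the expected rank and is a direct summand of ${}_{\mathbf A_1}V_q(\lambda)$ as an $\mathbf A_1$-module; freeness follows because $\mathbf A_1$ is a PID (localization of $\mc[q]$) and the module is finitely generated torsion-free, being a submodule of the free module ${}_{\mathbf A_1}V_q(\lambda)$, and the rank is pinned down by tensoring with $\mc(q)$. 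Once freeness and compatibility of the ranks across the generic and special fibres are established, the chain of inequalities above is forced to be a chain of equalities and the theorem follows from Theorem~\ref{main-thm}.
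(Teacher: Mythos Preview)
Your argument for the basis statement is correct and essentially identical to the paper's: both reduce to the classical Theorem~\ref{main-thm} via the specialization map and the same Nakayama-type trick of clearing denominators and dividing out the highest common power of $(q-1)$. The extra scaffolding you build around the $\mathbf A_1$-lattice and the degree-by-degree dimension comparison is sound but not strictly needed---the paper simply observes $\dim V_q^\ff(\lambda)=\dim V(\lambda)=\#S(\lambda)$ and then runs the linear-independence argument directly.

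Your monomiality argument, however, contains a genuine gap. You write that ``$\{F^{\mathbf p}\mid \mathbf p\notin S(\lambda)\}$ must map to zero in $V_q^\ff(\lambda)$ simply because $\{F^{\mathbf p}\mathbf v_\lambda\mid\mathbf p\in S(\lambda)\}$ is already a basis and $V_q^\ff(\lambda)$ is spanned by all PBW monomials.'' This inference is invalid: if a spanning set contains a basis, the remaining elements need not vanish---they may be nonzero linear combinations of the basis elements. Concretely, for $\mathbf p\notin S(\lambda)$ with $\deg F^{\mathbf p}=d$, one could a priori have $\overline{F^{\mathbf p}\mathbf v_\lambda}=\sum_{\mathbf q\in S(\lambda),\,\deg=d} c_{\mathbf q}\,\overline{F^{\mathbf q}\mathbf v_\lambda}$ in $(V_q^\ff(\lambda))_d$ with some $c_{\mathbf q}\neq 0$; then $F^{\mathbf p}-\sum c_{\mathbf q}F^{\mathbf q}\in I_q^\ff(\lambda)$ while $F^{\mathbf p}\notin I_q^\ff(\lambda)$, and the ideal would not be monomial. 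Nothing in your paragraph rules this out.

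The paper's route is to invoke the \emph{classical} monomiality from Theorem~\ref{main-thm} together with the fact that $\xi$ preserves the filtration. The point is that one needs the stronger classical input $f^{\mathbf p}v_\lambda\in\ff_{d-1}V(\lambda)$ for every $\mathbf p\notin S(\lambda)$ (equivalently, that $\{f^{\mathbf q}v_\lambda\mid\mathbf q\in S(\lambda),\ \deg\le s\}$ is a basis of $\ff_sV(\lambda)$ for each $s$), not merely that $\{f^{\mathbf q}v_\lambda\mid\mathbf q\in S(\lambda)\}$ is a basis of $V^\ff(\lambda)$. This is exactly what the monomiality half of Theorem~\ref{main-thm} provides, and it is the missing ingredient in your argument.
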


\begin{proof}
Since $\dim V_q^\ff(\lambda)=\dim V_q(\lambda)=\dim V(\lambda)=\# S(\lambda)$ (the last equality arises from Theorem~\ref{ffl2}), it suffices to show that the set $\{F^{\mathbf{p}}\mathbf{v}_\lambda|\ \mathbf{p}\in S(\lambda)\}$ is linearly independent in $V_q^\ff(\lambda)$.
\par
Suppose that there exists a linear combination
$$\sum_{\mathbf{p}\in S(\lambda)}c_{\mathbf{p}}F^{\mathbf{p}}\mathbf{v}_\lambda=0$$
for $c_{\mathbf{p}}\in\mathbf{A}_1$. We separate these coefficients into two groups:
$$S(\lambda)_0=\{{\mathbf{p}}\in S(\lambda)|\ {\ev}_1(c_{\mathbf{p}})=0\},\ \ S(\lambda)_1=\{{\mathbf{p}}\in S(\lambda)|\ {\ev}_1(c_{\mathbf{p}})\neq 0\},$$
and we write the summation into two groups:
$$\sum_{\mathbf{p}\in S(\lambda)_0}c_{\mathbf{p}}F^{\mathbf{p}}\mathbf{v}_\lambda+\sum_{\mathbf{p}\in S(\lambda)_1}c_{\mathbf{p}}F^{\mathbf{p}}\mathbf{v}_\lambda=0.$$
By specializing $q$ to $1$, the first summand gives $0$ and the second one gives 
$$\sum_{\mathbf{p}\in S(\lambda)_1}{\ev}_1(c_{\mathbf{p}})f^{\mathbf{p}}v_\lambda=0$$
with ${\ev}_1(c_{\mathbf{p}})\neq 0$. 
\par
By Theorem \ref{main-thm}, $\ev_1(c_{\mathbf{p}})=0$, in contradiction to the definition of $S(\lambda)_1$, and forcing $S(\lambda)_1=\emptyset$. That is to say, for any ${\mathbf{p}}\in S(\lambda)$, $c_{\mathbf{p}}\in (q-1)\mathbf{A}_1$. Let $m$ be the maximal integer such that for any ${\mathbf{p}}\in S(\lambda)$, $c_{\mathbf{p}}\in (q-1)^m\mathbf{A}_1$. Hence for any ${\mathbf{p}}\in S(\lambda)$, 
$$\frac{c_{\mathbf{p}}}{(q-1)^m}\in\mathbf{A}_1.$$
We consider a new linear combination by dividing $(q-1)^m$:
$$\sum_{\mathbf{p}\in S(\lambda)}\frac{c_{\mathbf{p}}}{(q-1)^m}F^{\mathbf{p}}\mathbf{v}_\lambda=0.$$
In this new linear combination, $S(\lambda)_1\neq \emptyset$, but repeating the above procedure gives the contradiction $S(\lambda)_1=\emptyset$. Hence $c_{\mathbf{p}}=0$ for any $\mathbf{p}\in S(\lambda)$. 
\par
Since the specialization map $\xi$ preserves the filtration, the monomiality of the annihilating ideal follows from Theorem \ref{main-thm}.
\end{proof}

\section*{Acknowledgments}
The work of Xin Fang is supported by the Alexander von Humboldt Foundation. The work of Ghislain Fourier is funded by the DFG priority program 1388 ''Representation Theory''.


\begin{thebibliography}{99}

\bibitem{ABS1}
F.~Ardila, T.~Bliem, and D.~Salazar,
\emph{Gelfand-Tsetlin polytopes and Feigin-Fourier-Littelmann-Vinberg polytopes as marked poset polytopes.} J. Combin. Theory Ser. A 118 (2011), no. 8, 2454-2462.

\bibitem{ASS}
I.~Assem, A.~Skowronski, and D.~Simson,
\emph{Elements of the representation theory of associative algebras. Vol. 1. Techniques of representation theory.} London Mathematical Society Student Texts, 65. Cambridge University Press, Cambridge, 2006.


\bibitem{BBDF1}
T.~Backhaus, L.~Bossinger, C.~Desczyk, and G.~Fourier,
\newblock {\em The degree of the {H}ilbert--{P}oincar\'e polynomial of {PBW}-graded
  modules.}
\newblock {C. R. Math. Acad. Sci. Paris, 352(12):959--963, 2014.}

\bibitem{BD1}
T.~Backhaus and C.~Desczyk,
\newblock {\em P{BW} filtration: {F}eigin-{F}ourier-{L}ittelmann modules via {H}asse
  diagrams.}
\newblock {Journal of Lie Theory 25 (2015), No. 3, 815--856.}

\bibitem{BF1}
R.~Biswal, and G.~Fourier,
\newblock {\em Minuscule {S}chubert varieties: {P}oset polytopes, {PBW}-degenerated {D}emazure modules, and {K}ogan faces.}
\newblock arXiv:1410.1126, 2014.

\bibitem{Bourbaki}
   N.~Bourbaki,
   \emph{\'El\'ements de math\'ematique: Groupes et alg\`ebres de Lie, Chapitres 4 \`a 6.}
    Masson, Paris, 1981.
 
\bibitem{CLFR1}
G.~Cerulli Irelli, E.~Feigin, and M.~Reineke,
\newblock {\em Quiver Grassmannians and degenerate flag varieties.}
\newblock {Algebra and {N}umber {T}heory, vol. 6, p. 165-194, 2012.}

\bibitem{CLFR2}
G.~Cerulli Irelli, E.~Feigin, and M.~Reineke,
\newblock {\em Degenerate flag varieties: moment graphs and Schr\"oder numbers.}
\newblock {Journal of {A}lgebraic {C}ombinatorics, p. 1-31, 2012}

\bibitem{CL1}
G.~{Cerulli Irelli} and M.~Lanini,
\newblock {\em Degenerate flag varieties of type {A} and {C} are {S}chubert
  varieties.}
\newblock Int. Math. Res. Notices first published online August 7, 2014
  doi:10.1093/imrn/rnu128, 2014.   
		
\bibitem{CLL1}
  G.~Cerulli Irelli, M.~Lanini, and  P.~Littelmann,
  \emph{Degenerate flag varieties and Schubert varieties: a characteristic free approach.} arXiv:1502.04590, 2015

\bibitem{CF1}
I.~Cherednik and E.~Feigin,
\newblock {\em Extremal part of the {PBW}-filtration and {E}-polynomials.}
\newblock arXiv:1306.3146, 2013.

\bibitem{DK}
 C.~De Concini, V. G.~Kac,
 \newblock {\emph{Representations of quantum groups at roots of 1.}} 
\newblock { Operator Algebras, Unitary Representations, Enveloping Algebras, and Invariant Theory (Paris 1989), Birkh\"{a}user, Boston, 1990, pp. 471-506.}
    
 \bibitem{F1}
E.~Feigin,
\newblock {\em  $\mathbb{G}_a^M$ degeneration of flag varieties.}
\newblock Selecta Math. (N.S.), 18(3):513--537, 2012.		

\bibitem{FF1}
E.~Feigin and M.~Finkelberg,
\newblock {\em Degenerate flag varieties of type {A}: {F}robenius splitting and {PBW}
  theorem.}
\newblock Math. Z., 275(1-2):55--77, 2013.
		
\bibitem{FFoL1}
  E.~Feigin, G.~Fourier, and P.~Littelmann,
  \emph{PBW filtration and bases for irreducible modules in type
$A_n$.} Transform. Groups, 16(1):71-89, 2011.
  
\bibitem{FFoL2}
  E.~Feigin, G.~Fourier, and P.~Littelmann,
  \emph{PBW filtration and bases for symplectic Lie algebras.} 
  Int. Math. Res. Not. IMRN, 1(24):5760-5784, 2011.
  
\bibitem{FFoL3}
   E.~Feigin, G.~Fourier, and P.~Littelmann, 
   \emph{Favourable modules: Filtrations, polytopes, Newton-Okounkov bodies and flat degenerations.} arXiv:1306.1292v4, 2014.
   
\bibitem{FFoL4}
  E.~Feigin, G.~Fourier, and P.~Littelmann,
  \emph{PBW-filtration over $\mathbb{Z}$ and compatible bases for $V(\lambda)$ in
type $A_n$ and $C_n$.} 
  Springer Proceedings in Mathematics and Statistics, 40:35-63, 2013.  
    
\bibitem{FM1}
E.~Feigin and I.~Makedonskyi,
\newblock {\em Nonsymmetric {M}acdonald polynomials, {D}emazure modules and {PBW}
  filtration.}
\newblock arXiv:1407.6316, 2014.		
		
\bibitem{Fo1}
G.~Fourier,
\newblock {\em Marked poset polytopes: Minkowski sums, indecomposables, and unimodular equivalence.}
\newblock {arXiv:1410.8744, 2014.}

\bibitem{Fo2}
G.~Fourier,
\newblock {\em PBW-degenerated Demazure modules and Schubert varieties for triangular elements.}
\newblock {arXiv:1408.6939, 2014.}

\bibitem{HK02}
   J.~Hong and S-J.~Kang,
   \emph{Introduction to quantum groups and crystal bases.}
   Graduate Studies in Mathematics, 42. American Mathematical Society, Providence, RI, 2002. 
   
\bibitem{Hum}
   J.E.~Humphreys,
\emph{Introduction to Lie algebras and representation theory.}
Graduate Texts in Mathematics, Vol. 9. Springer-Verlag, New York-Berlin, 1972.

   
\bibitem{Jantzen}
  J. C.~Jantzen,
\emph{Lectures on quantum groups.}
Graduate Studies in Mathematics, 6. American Mathematical Society, Providence, RI, 1996.
    
\bibitem{Lusztig}
   G.~Lusztig,
   \emph{Introduction to quantum groups.}
   Reprint of the 1994 edition. Modern Birkh\"auser Classics. Birkh\"auser/Springer, New York, 2010.

\bibitem{ReDCB}
M.~Reineke,
\emph{Multiplicative properties of dual canonical bases of quantum groups.}
J. Algebra 211 (1999), no. 1, 134-149.

\bibitem{Rin90}
 C. M.~Ringel,
   \emph{Hall algebras and quantum groups.}
   Invent. Math. \textbf{101}, (1990), 583-592.
   

   
   
\end{thebibliography}
\end{document}